\def\marginpar#1{\ignorespaces}
 \def\@textbottom{\vskip \z@ \@plus 9.52pt}
 \let\@texttop\relax
\DeclareMathOperator\PI{PI}
\DeclareMathOperator\LSI{LSI}
\DeclareMathOperator\ent{Ent}
\DeclareMathOperator\var{Var}
\DeclareMathOperator\E{\mathbb{E}}
\DeclareMathOperator\Id{\text{Id}}
\newtheorem{theorem}{Theorem}[section]
\newtheorem{lemma}[theorem]{Lemma}
\newtheorem{proposition}[theorem]{Proposition}
\newtheorem{corollary}[theorem]{Corollary}
\newtheorem{definition}[theorem]{Definition}
\newtheorem{remark}[theorem]{Remark}
\newtheorem{assump}[theorem]{Assumption}
\numberwithin{equation}{section}
\begin{document}
\title[Ergodicity of the infinite swapping algorithm]{Ergodicity of the infinite swapping algorithm at low temperature}

\author[Georg Menz]{{Georg} Menz}
\address{Department of Mathematics, UCLA. 
} \email{menz@math.ucla.edu}

\author[Andr\'e Schlichting]{{Andr\'e} Schlichting}
\address{Institut for Analysis and Numerics, WWU Münster.}
\email{a.schlichting@uni-muenster.de}

\author[Wenpin Tang]{{Wenpin} Tang}
\address{Department of Industrial Engineering and Operations Research, Columbia University.
} \email{wt2319@columbia.edu}

\author[Tianqi Wu]{{Tianqi} Wu}
\address{Department of Mathematics, UCLA.
} \email{timwu@ucla.edu}

\date{\today} 
\begin{abstract}
Sampling Gibbs measures at low temperatures is an important task but computationally challenging. 
Numerical evidence suggests that the infinite-swapping algorithm (isa) is a promising method. 
The isa can be seen as an improvement of the replica methods. 
We rigorously analyze the ergodic properties of the isa in the low temperature regime, deducing an Eyring-Kramers formula for the spectral gap (or Poincar\'e constant) and an estimate for the log-Sobolev constant. 
Our main results indicate that the effective energy barrier can be reduced drastically using the isa compared to the classical over-damped Langevin dynamics. 
As a corollary, we derive a deviation inequality showing that sampling is also improved by an exponential factor. 
Finally, we study simulated annealing for the isa and prove that the isa outperforms again the over-damped Langevin dynamics.
\end{abstract}

\maketitle
\textit{Key words:} Sampling, low-temperature, simulated annealing, infinite swapping, parallel tempering, replica exchange, Poincar\'e inequality, spectral gap, log-Sobolev inequality, Eyring-Kramers formula.\smallskip

\textit{AMS 2010 Mathematics Subject Classification: } 60J60, 39B62.

\section{Introduction}

Sampling from Gibbs measures at low temperatures is important in science and engineering.
It has a variety of applications including molecular dynamics \cite{Andersen80, CS11} and Bayesian inference \cite{RC05, GS13}.
Usually, sampling at low temperatures is slow due to the fact that at low temperatures energy barriers in the underlying energy landscape are large. 
This traps the stochastic sampling process and slows down sampling.

One popular way to sample Gibbs measures is to run the over-damped Langevin equation or its various discretization schemes for approximation, see e.g. \cite{RT96, Dala17, DM17, DC19}.
A lot of efforts have been made to accelerate sampling at low temperatures and there are many competing methods.
One of them is the replica exchange method which is also known as parallel tempering. 
In the simplest version of a replica exchange method, one considers two particles governed by independent copies of the underlying dynamics, for instance, the over-damped Langevin equation.
One particle evolves at the desired low temperature $\tau_1>0$, and the other particle evolves at a higher temperature $\tau_2 > 0$ with $ \tau_1 \ll \tau_2 \ll 1$.
At some random times, the positions of both particles are swapped. 
This approach has the advantage that the particle at a low temperature correctly samples the low-temperature Gibbs measure
whereas the particle at a high temperature can explore the full state space, and discover the relevant states of the system efficiently.

Replica exchange methods or parallel tempering have been successfully applied in many different scenarios, and they seem to accelerate sampling in low-temperature situations quite well.
As far as we are concerned, almost all evaluations of the performance of those methods are empirical. 
In an attempt to study the sampling performance of parallel tempering, it was discovered in \cite{Dupuis12} that the large deviation rate function for time-averaged empirical measures of parallel tempering is a monotone function of the swapping rate. 
It implies that sampling only improves at a faster swapping rate. 

This led to the question of a suitable limiting process as the swapping rate goes to infinity. Since the number of jumps of the particles would grow to infinity in any bounded time-interval, the authors in~\cite{Dupuis12} suggest the infinite swapping algorithm/process (isa), a procedure that can be interpreted as the limit of parallel tempering, where instead of the \emph{particle positions}, the \emph{particle temperatures} are swapped at an infinite fast rate (see Section \ref{s_isa_as_limit_of_pt} for a review).

To be more precise, let $H: \mathbb{R}^n \to \mathbb{R}$ be the underlying energy landscape and the goal is to sample the Gibbs measure with density $\nu^{\tau_1}(x):= \frac{1}{Z^{\tau_1}} \exp\left(-\frac{H(x)}{\tau_1} \right)$ where $Z^{\tau_1}$ is the normalizing constant.
Formally, given two different temperatures $0<\tau_1 \ll \tau_2$, the isa is defined as the evolution of two particles $X_1= (X_1(t), \, t \ge 0)$ and $X_2= (X_2(t), \, t \ge 0)$ governed by the stochastic differential equations (SDEs):
\begin{equation}
  \label{e_infinite_swapping}
  \left\{ \begin{array}{rcl}
dX_1 = - \nabla  H(X_1)\, dt + \sqrt{2 \tau_1 \rho(X_1 , X_2) + 2 \tau_2 \rho(X_2, X_1) } \, d B_1 ,\\ 
dX_2 = - \nabla  H(X_2)\, dt + \sqrt{2 \tau_2 \rho(X_1 , X_2) + 2 \tau_1 \rho(X_2, X_1) } \, d B_2 ,
\end{array}\right.
 \end{equation}
where $(B_1, B_2)$ are independent Brownian motions in $\mathbb{R}^n$, and 
\begin{equation}\label{e:def:rho:pi}
  \rho (x_1,x_2) :=\frac{\pi (x_1, x_2)}{\pi (x_1, x_2) + \pi (x_2, x_1)} \quad \text{and} \quad \pi (x_1, x_2) := \nu^{\tau_1}(x_1) \nu^{\tau_2}(x_2).
\end{equation}
Since $\tau_1 \ne \tau_2$, we have that $\pi(x_1,x_2) \ne \pi(x_2, x_1)$, and thus $\rho (x_1,x_2)  \ne  \rho (x_2,x_1)$. The functions $\rho(x_1,x_2), \rho(x_2,x_1)$ are relative weights assigned to the two configurations $(x_1,x_2)$, $(x_2,x_1)$ based on $\pi$. At each moment, this essentially assigns the higher temperature $\tau_2$ to the particle whose potential energy $H$ is higher at that moment (see also~\cite[Section 3.2]{Doll17}). 

The crucial feature of the dynamics~\eqref{e_infinite_swapping} is that the empirical measure
\[
\eta_t: = \frac{1}{t} \int_0^t \rho(X_1, X_2) \delta_{(X_1, X_2)} + \rho(X_2, X_1) \delta_{(X_2, X_1)} ds 
\]
converges weakly to the product measure $\pi$ as $t \to \infty$ by the ergodic theorem. 
In particular, by restricting to the first coordinate, the measure $\frac{1}{t} \int_0^t \rho(X_1, X_2) \delta_{X_1} + \rho(X_2, X_1) \delta_{X_2} ds$ approximates the Gibbs measure $\nu^{\tau_1}$ for $t$ large enough.
In \cite{Dupuis12}, a large deviation principle was established for the measure $\eta_t$. 
However, it is not clear how the rate function depends on the temperatures $(\tau_1,\tau_2)$, so it is less obvious why the higher temperature $\tau_2$ may be helpful.
Further numerical and heuristic studies in \cite{Doll17} indicate that there is an exponential gain when using the isa for sampling in comparison with the classical over-damped Langevin dynamics.
Recently the isa was applied to training restricted Boltzmann machines \cite{HNR20}, and was shown to be competitive empirically.
But no rigorous result has been established so far on how well the isa accelerates sampling at low temperatures.

In this article we take the analysis of \cite{Dupuis12, Doll17} to the next level through a functional inequality approach.
We carry out the first rigorous study of the ergodic properties of the isa at low temperatures by quantifying its convergence in terms of the temperatures ($\tau_1, \tau_2)$.
Under standard nondegeneracy assumptions,
we deduce the low-temperature asymptotics for the Poincar\'e and the log-Sobolev constant of the isa, see Theorem \ref{thm:PI} and Theorem \ref{thm:LSI} below. 
In the context of metastability, these formulas are also known as Eyring-Kramers formulas (see \cite{Ber13} for background).
Comparing our results to the Eyring-Kramers formulas for the over-damped Langevin equation (e.g. see \cite{Bovier04, Bovier05, MS14}), 
we have an exponential gain: the effective energy barrier of the underlying energy landscape~$H$ only sees the higher temperature $\tau_2$. 
We also give indications that our results are optimal.

To the best of our knowledge, this is the first time an Eyring-Kramers formula was derived for inhomogeneous diffusions, 
for which the stationary and ergodic distribution is generally unknown.
By construction, however, the isa \eqref{e_infinite_swapping} has an explicit stationary distribution $\mu$ given by $\mu(x_1, x_2) = \frac{1}{2} \left(\pi (x_1,x_2) + \pi (x_2, x_1) \right)$, where $\pi(\cdot, \cdot)$ is defined by \eqref{e:def:rho:pi}.
This makes a rigorous analysis of \eqref{e_infinite_swapping} feasible. For the proof of our main results, Theorem \ref{thm:PI} and Theorem \ref{thm:LSI}, we follow the transportation approach of \cite{MS14}.
The idea is to identify the right ``paths'' of transport which give the leading order term in the Poincar\'e and the log-Sobolev constant of the isa. In the case of the Langevin diffusion process those paths can be obtained from mountain pass paths between local minima of the energy $H$.  
Since the isa is a process on $\mathbb{R}^n \times \mathbb{R}^n$ swapping the two particle temperatures, it requires analyzing transport in a planar network obtained from the product structure of two energies, and so is more involved.

There are several other methods which could be used to deduce the Eyring-Kramers formula for the Poincar\'e constant. 
For instance, one could consider adapting the potential theoretic approach (see \cite{Bovier04, Bovier05}), or the semiclassical analysis (see \cite{HeKlNi04, HeNi05, HeNi06}), or the approach using quasi-stationarity (see \cite{BR16, LL19}).
We adopt the approach of \cite{MS14}, which is robust enough to deduce the Eyring-Kramers formula for the log-Sobolev constant in the setting of an inhomogeneous diffusion coefficient. 
The rate of convergence in relative entropy obtained from the log-Sobolev constant is important for our applications to sampling and simulated annealing.

In the first application, we apply the main results to study the sampling properties of the isa and compare it to the over-damped Langevin dynamics. 
It is well known that the Poincar\'e and the log-Sobolev constants characterize the rate of convergence to equilibrium of the underlying process. 
It is also known that Poincar\'e and log-Sobolev inequalities yield non-asymptotic concentration/deviation inequalities (see \cite{CaGu08,WuYa08} and references therein). 
Hence, our main results yield a quantitative control in terms of the temperatures $(\tau_1, \tau_2)$ on the rate of convergence of the time average to the ensemble average, quantifying the ergodic theorem.
Let us note in comparison that the precise dependence on $(\tau_1, \tau_2)$ is missing in the large deviation estimates for the isa in \cite{Dupuis12}.
As a byproduct of our analysis, we find a condition on $(\tau_1, \tau_2)$ under which sampling at low temperatures using the isa is exponentially faster than using the over-damped Langevin dynamics. 
This provides a guidance on the choice of the higher temperature $\tau_2$ for the isa.

In the second application, we study the isa for simulated annealing and compare it to simulated annealing adapted to the over-damped Langevin dynamics. 
Simulated annealing (SA) is an umbrella term denoting a particular set of stochastic optimization methods. 
SA can be used to find the global extremum of a function $H:\mathbb{R}^n \to \mathbb{R}$, in particular when $H$ is non-convex. 
Those methods have many applications in different fields, for example in physics, chemistry and operations research (see e.g. \cite{Laar87, KoAnja94, Nar99}). 
The name and inspiration comes from annealing in metallurgy, a process that aims to increase the size of the crystals by heating and controlled cooling. 
The SA mimics this procedure mathematically. 
The stochastic version of SA was independently described by Kirkpatrick, Gelatt and Vecchi \cite{KGV} and {\v{C}}ern\'y \cite{Cer}.
See Section \ref{s4} for details on simulated annealing.

Replica exchange methods or parallel tempering have been successfully applied to nonconvex optimization (see e.g. \cite{CC19, DT20}) and simulated annealing (see e.g. \cite{KaSrZa09, LiProArZhGo09}). 
Because the isa has better ergodic properties than parallel tempering, there is big hope that the isa can produce even better results.
Additionally, our main results show that the isa mixes much faster than the over-damped Langevin dynamics. 
Therefore, one expects that the isa also outperforms the over-damped Langevin dynamics for simulated annealing. 
In this article, we show that this is indeed the case. 
From a computational point of view, one has to investigate the trade-off between the theoretical improvement and the cost of doubling the dimension of the underlying state space. Hence, further studies on the computational costs are needed to decide whether isa could practically compete with state-of-the-art methods for simulated annealing, e.g. methods based on L\'evy flights \cite{Pav07} or Cuckoo's search \cite{YaSu09}. 

There are a few directions to extend this work. 
From the Eyring-Kramers formulas for the isa, we obtain deviation upper bounds for the convergence to equilibrium at low temperatures. It is interesting to know whether these upper bounds are optimal, and to derive matching lower bounds. Also, we plan to extend the study of the isa to the underdamped Langevin dynamics, for which the Eyring-Kramers formula of the Poincar\'e constant was established in \cite{HHS11}. Furthermore, one could also extend the isa to L\'evy flights and apply it to simulated annealing for even better performance.

\medskip
{\bf Organization of the paper:} In Section \ref{s2}, we provide background, derive the isa, present the main results and apply these results to sampling and simulated annealing. 
In Section \ref{s_proofs}, we give proofs of the results stated in Section \ref{s2}.

\section{Setting, main results and applications}

\label{s2}
In this section, we start by discussing how the isa emerges as the weak limit from parallel tempering. 
Then we introduce the precise setting and assumptions. 
After this we present the main results of this article, the Eyring-Kramers formula for the Poincar\'e constant and an estimate of the log-Sobolev constant for the isa. 
We also give indications that they are optimal.
We close this section by discussing two applications: sampling Gibbs measures at low temperatures and simulated annealing.

\subsection{ISA as the weak limit of parallel tempering} \label{s_isa_as_limit_of_pt} 

Before describing parallel tempering and isa, let us first consider the over-damped Langevin equation which is a single diffusion specified by a sufficiently smooth, non-convex energy landscape $H: \mathbb{R}^n \rightarrow \mathbb{R}$ and a temperature $\tau > 0$.
It is governed by the SDE:
\begin{align}
\label{eq:odl}
d \xi_t &= -\nabla H(\xi_t) dt  + \sqrt{2 \tau} dB_t,
\end{align}
where $(B_t, \, t \ge 0)$ is standard Brownian motion in $\mathbb{R}^{n}$. 
The infinitesimal generator of the diffusion process~\eqref{eq:odl} is
\begin{align}
L_{\tau} := \tau \Delta - \nabla H \cdot \nabla.
\end{align}
Under some growth assumptions on $H$ (e.g. those of \cite[Section 1.2]{MS14}), the over-damped Langevin equation \eqref{eq:odl} has a unique invariant measure with density:
\begin{equation}\label{e_spGm_entire}
\nu^\tau(x) := \frac{1}{Z^\tau} \exp{\left(-\frac{H(x)}{\tau}\right)}
\end{equation} 
where $Z^\tau$ is the normalizing constant.
This probability measure is known as the Gibbs measure with energy landscape $H$ and temperature $\tau$.
The Dirichlet form associated with the Gibbs measure $\nu^{\tau}$ is defined for any suitable test function $f:\mathbb{R}^d \to \mathbb{R}$ by
\begin{align}
\mathcal{E}_{\nu^\tau} (f) := \int_{\mathbb{R}^n} (-L_\tau f) f d\nu^\tau = \int_{\mathbb{R}^n} \tau |\nabla f|^2 d \nu^\tau.
\end{align}
For general non-convex energy landscape $H$, the over-damped Langevin equation shows metastable behavior at low temperatures $\tau$ in the sense of a separation of time scales:
\begin{itemize}[itemsep = 3 pt]
	\item
	In the short run, the process converges fast to a local minimum of the energy landscape $H$;
	\item
	In the long run, the process stays near a local minimum for exponentially long time before it jumps to another local minimum.
\end{itemize}
In the previous work of \cite{MS14}, this behavior is captured by explicit, low-temperature asymptotic formulas (known as Eyring-Kramers formulas) for the two constants $\rho, \alpha > 0$ appearing in the following two functional inequalities for the invariant measure $\nu^\tau$: 
the Poincar\'e inequality (PI$(\rho)$)
\begin{align}
\label{eq:PIdef}
\var_{\nu^\tau}(f) := \int \Bigl(f - \int f d\nu^\tau\Bigr)^2 d\nu^\tau \leq \frac{1}{\rho} \mathcal{E}_{\nu^\tau} (f)
\end{align}
and the log-Sobolev inequality (LSI$(\alpha)$)
\begin{align}
\label{eq:LSIdef}
\ent_{\nu^\tau}(f^2) := \int f^2 \log \frac{f^2}{\int f^2 d\nu^\tau} d\nu^\tau \leq \frac{2}{\alpha} \mathcal{E}_{\nu^\tau} (f)
\end{align}
holding for all sufficiently smooth test functions $f: \mathbb{R}^{n} \rightarrow \mathbb{R}$. 

It is understood that for larger constants $\rho, \alpha > 0$, the diffusion process tends faster to equilibrium.
More precisely, the constants $\rho$ and $\alpha$ are the exponential rate of relaxation to equilibrium measured in variance or relative entropy, respectively.
Thus, it is useful to obtain lower bounds on the constants $\rho, \alpha$, or equivalently upper bounds on their inverse $\rho^{-1}, \alpha^{-1}$.
Also note that the Poincar\'e and the log-Sobolev inequalities \eqref{eq:PIdef}--\eqref{eq:LSIdef} are defined slightly different from those in \cite{MS14}, where $\mathcal{E}_{\nu^\tau} (f)$ is replaced with $\int |\nabla f|^2 d\nu^{\tau}$ on the right side.
Thus, the constants $\rho, \alpha$ defined by \eqref{eq:PIdef}--\eqref{eq:LSIdef} differ from those in \cite{MS14} up to a factor of $\tau$.

In the present work, we extend these results to an inhomogeneous diffusion, the ``infinite swapping process''. 
It arises from parallel tempering by swapping particle temperatures, which we now introduce. Given two temperatures $0 < \tau_1 < \tau_2 \ll 1$, $\tau_2 > K \tau_1 $ for some $K > 1$, define two product measures on $\mathbb{R}^{n} \times \mathbb{R}^n$:
\begin{equation}
\pi^+(x_1, x_2) := \nu^{\tau_{1}}(x_1)\nu^{\tau_{2}}(x_2),\qquad \pi^-(x_1, x_2) := \nu^{\tau_{2}}(x_1) \nu^{\tau_{1}}(x_2).
\end{equation} 
Identify the symbols $\sigma = +, -$ with the identity and the swap permutation on $\{1, 2\}$, respectively. 
Then $\pi^\sigma$ is the invariant measure of the following simple product SDE:
\begin{equation*}
\left\{ \begin{array}{rcl}
dX_1 = -\nabla H(X_1) \, dt + \sqrt{2 \tau_{\sigma(1)}} \, d B_1 \:, \\ 
dX_2 = -\nabla H(X_2) \, dt + \sqrt{2 \tau_{\sigma(2)}} \, d B_2 \:,
\end{array}\right.
\end{equation*}
where $B := (B_1, B_2)$ is standard Brownian motion in $\mathbb{R}^{n} \times \mathbb{R}^n$. 
Its infinitesimal generator consists of the two infinitesimal generators of the marginals
\begin{align}
L_\sigma := L_{\tau_{\sigma(1)}}^{x_1} + L_{\tau_{\sigma(2)}}^{x_2},
\end{align}
where the superscripts indicate the variable the generators are acting on. By construction $L_\sigma$ is reversible with respect to $\pi^\sigma$ and its associated Dirichlet form is
\begin{align}
\mathcal{E}_{\pi^\sigma} (f) := \int_{\mathbb{R}^n \times \mathbb{R}^n} (-L_\sigma f) f d\pi^\sigma =	\E_{\pi^\sigma} (\tau_{\sigma(1)} |\nabla_{x_1} f|^2 + \tau_{\sigma(2)} |\nabla_{x_2} f|^2). 
\end{align}
The idea of parallel tempering is to swap between the positions of $X_1$ and $X_2$. 
At some random times, $X_1$ is moved to the position of $X_2$ and vice-versa, so the resulting process is a Markov process with jumps.
To guarantee that the invariant measure remains the same, the jump intensity is of the Metropolis form $a\,g(x_1,x_2)$, where the constant `$a$' is the swapping rate of parallel tempering, and $g = \min \left(1, \pi^-/\pi^+ \right)$.
The resulting process is denoted by $(X_1^a(t), X_2^a(t))$.

Intuitively, larger values of `$a$' lead to faster convergence to equilibrium. However, the process $(X_1^a(t), X_2^a(t))$ is not tight so it does not converge weakly as $a \rightarrow \infty$. 
The key idea of \cite{Dupuis12} is to swap the \emph{temperatures} of $(X_1,X_2)$ instead of swapping the \emph{positions}.
More precisely, they consider the following process
\begin{equation*}
\left\{ \begin{array}{rcl}
d\overline{X}^a_1 = -\nabla H(X_1) \, dt + \sqrt{2 \tau_1 \mathds{1}_{Z^a = 0} + 2 \tau_2 \mathds{1}_{Z^a = 1}} \, d B_1 \:, \\ 
d\overline{X}_2 = -\nabla H(X_2) \, dt + \sqrt{2 \tau_2 \mathds{1}_{Z^a = 0} + 2 \tau_1 \mathds{1}_{Z^a = 1}} \, d B_2 \:,
\end{array}\right.
\end{equation*}
where $Z^a$ is a jump process which switches from state $0$ to state $1$ with intensity $a\, g(\overline{X}^a_1, \overline{X}^a_2)$, and from state $1$ to state $0$ with intensity $a\, g(\overline{X}^a_2, \overline{X}^a_1)$.
It was shown in \cite{Dupuis12} that as $a \rightarrow \infty$, the process $(\overline{X}^a_1(t), \overline{X}^a_2(t)$ converges weakly to the isa, whose dynamics is governed by the SDE \eqref{e_infinite_swapping}.
We rewrite it as
\begin{equation}
\label{eq:infswap}
\left\{ \begin{array}{rcl}
dX_1 = -\nabla H(X_1)\, dt + \sqrt{2 a_1(X_1, X_2)} \, d B_1 \:, \\ 
dX_2 = -\nabla H(X_2)\, dt + \sqrt{2 a_2(X_1, X_2)} \, d B_2 \:,
\end{array}\right.
\end{equation}
where the state-dependent diffusion coefficients $a_1, a_2 : \mathbb{R}^n \times \mathbb{R}^n \to [\tau_1,\tau_2]$ are given by
\begin{align*}
a_1 := \tau_1 \rho^+ + \tau_2 \rho^- ~ &\quad\mbox{ and }\quad a_2 := \tau_2 \rho^+ + \tau_1 \rho^- \\
\mbox{ with }\quad \rho^+ := \frac{\pi^+}{\pi^+ + \pi^-} ~ &\quad\mbox{ and }\quad \rho^- := \frac{\pi^-}{\pi^+ + \pi^-}.
\end{align*}
The infinitesimal generator of the isa \eqref{eq:infswap} is 
\begin{align}
\mathcal{L} := \rho^+ L_+ + \rho^- L_- = -\nabla H(x_1) \cdot \nabla_{x_1} - \nabla H(x_2) \cdot \nabla_{x_2} + a_1 \Delta_{x_1} + a_2\Delta_{x_2},
\end{align}
which is no longer the sum of two one-particle generators due to the full-space dependent diffusion coefficients $a_1,a_2$. 
A short calculations shows that $\mathcal{L}$ is self-adjoint with respect to the invariant symmetric measure
\begin{equation}\label{eq:invmeasure}
\mu: = \tfrac{1}{2}(\pi^+ + \pi^-).
\end{equation}
Let us note that the measure $\mu$ in~\eqref{eq:invmeasure} is generally not of product form, which contributes to the effectiveness of the sampling, at the expense of certain complications in our analysis. The Dirichlet form associated with $\mu$ is given by
\begin{align}
\mathcal{E}_\mu (f) := \int (-\mathcal{L} f) f d\mu = \frac{1}{2}\mathcal{E}_{\pi^+} (f) + \frac{1}{2} \mathcal{E}_{\pi^-} (f) = \int (a_1  | \nabla_{x_1} f|^2 + a_2  | \nabla_{x_2} f|^2)  d\mu.
\end{align}
We also define the Fisher information
\begin{align}\label{eq:Fisher}
\mathcal{I}_\mu (f^2) := 2 \mathcal{E}_\mu (f).
\end{align}

\subsection{Growth and nondegeneracy assumptions} 
We adopt the same assumptions on the energy landscape $H$ as in \cite[Section 1.2]{MS14}. 
These assumptions are standard in the study of metastability (see e.g. \cite{Bovier04, Bovier05}). 

\begin{definition}[Morse function]\label{defmorse}
	A smooth function $H: \mathbb{R}^n\to\mathbb{R}$ is a \emph{Morse function} if the
	Hessian $\nabla^2 H$ of $H$ is nondegenerate on the set of critical
	points. That implies, for some $1 \leq C_H < \infty$ holds
	\begin{equation}
	\label{emorse} \forall x \in\mathcal{S}:= \bigl\{z\in\mathbb{R}^n: \nabla H(z) = 0 \bigr
	\} \ :\qquad \frac
	{|\xi|}{C_H} \leq \bigl |\nabla^2 H(x)\xi|  \leq C_H |\xi|.
	\end{equation}
\end{definition}

We also make the following growth assumptions on the potential $H$
to ensure the existence of PI and LSI.

\begin{assump}[PI]\label{assumeenv}
	$H\in C^3(\mathbb{R}^n, \mathbb{R})$ is a nonnegative Morse function, such that for
	some constants $C_H>0$ and $K_H \geq0$ holds
		\begin{align}
		\label{assumegradsuperlinear} \liminf_{|x| \to\infty} |\nabla H(x)| &\geq
		C_H ,\\
		\label{assumegradlaplace} \liminf_{|x|  \to\infty} \bigl(|\nabla H(x)|^2 - \Delta H(x) \bigr) &\geq - K_H .
		\end{align}
		
\end{assump}

\begin{assump}[LSI]\label{assumeenvLSI}
	$H\in C^3(\mathbb{R}^n, \mathbb{R})$ is a nonnegative Morse function, such that for
	some constants $C_H>0$ and $K_H \geq0$ holds
		\begin{align}
		\label{assumelyapLSI} \liminf_{|x| \to\infty}\frac{|
			\nabla H(x)|^2 - \Delta
			H(x)}{|x|^2} &\geq C_H ,\\
		\label{assumehessLSI} \inf_{x} \nabla^2 H(x) &\geq -K_H \operatorname{Id}.
		\end{align}
\end{assump}
\begin{remark}\label{assumeremdiscussion}
{\em
	Assumption \ref{assumeenv} has the following consequences for
	the energy landscape $H$:
	\begin{itemize}[itemsep = 3 pt]
		\item
		The condition \eqref{assumegradsuperlinear} and $H(x) \geq0$
		ensures that $e^{-\frac{H}{\tau}}$ is integrable and can be normalized to a
		probability measure on $\mathbb{R}^n$ (see \cite[Lemma 3.14]{MS14}).
		Hence, the probability measures $\nu^\tau$ (and therefore $\pi^+, \pi^-$ and $\mu$) are
		well-defined.
		\item The Morse condition \eqref{emorse} together with the growth
		condition \eqref{assumegradsuperlinear} ensures that the set $\mathcal{S}$
		of critical points is discrete and finite. In particular, it follows
		that the set of local minima is a finite set $\mathcal{M} =  \{m_1,\dots
		,m_N \}$.
		\item Together with the rest of Assumption \ref{assumeenv}, the Lyapunov-type condition \eqref{assumegradlaplace} leads to a local PI for the Gibbs measures $\nu^\tau$ (see \cite[Theorem 2.9]{MS14}).
		\end{itemize}
		Similarly, Assumption \ref{assumeenvLSI} yields the following consequences for the energy landscape~$H$. 
		\begin{itemize}[itemsep = 3 pt]
		\item It leads to a local LSI for the Gibbs measures $\nu^\tau$ (see \cite[Theorem 2.10]{MS14}).
		\item Assumption \ref{assumeenvLSI} implies Assumption \ref{assumeenv}, which is natural in light of the fact that LSI is stronger than PI. 
	\end{itemize}
}	
\end{remark}
To keep the presentation clear, we also make some nondegeneracy assumptions on the energy landscape $H$. 
First, to simplify some formulas, we assume without loss of generality throughout that 
\begin{equation*}
\min_{x \in \mathbb{R}^n} H(x) = 0.
\end{equation*}
The saddle height $\widehat{H}(m_i,m_j)$ between two local minima $m_i, m_j$ is defined by
\begin{equation*}
\widehat{H}(m_i,m_j) : = \inf \left\{\max_{s \in [0,1]} H(\gamma(s)): \gamma \in C([0,1], R^n), \, \gamma(0) = m_i, \, \gamma(1) = m_j \right\}.
\end{equation*}
\begin{assump}
\label{assumption}
Let $m_1, \cdots, m_N$ be the positions of the local minima of $H$. 
\begin{enumerate}[label=(\roman*), itemsep = 3 pt]
\item
$m_1$ is the unique global minimum of $H$, and $m_1, \ldots, m_N$ are ordered in the sense that there exists $\delta > 0$ such that
\begin{equation}
\label{eq:nondeg}
H(m_N) \geq H(m_{N-1}) \geq \cdots \geq H(m_2) \ge \delta >0 = H(m_1).
\end{equation}
\item
For each $i, j \in [N]: = \{1, \ldots, N\}$, the saddle height between $m_i, m_j$ is attained at a unique critical point $s_{ij}$ of index one. That is, $H(s_{ij}) = \widehat{H}(m_i,m_j)$, and if $\{\lambda_1, \ldots, \lambda_n\}$ are the eigenvalues of $\nabla^2 H(s_{ij})$, then $\lambda_1 =: \lambda^- < 0$ and $\lambda_i > 0$ for $i \in \{2, \ldots, n\}$. 
The point $s_{ij}$ is called the communicating saddle point between the minima $m_i$ and $m_j$.
\item
There exists $p \in [N]$ such that the energy barrier $H(s_{p1}) - H(m_p)$ dominates all the others. That is,
there exists $\delta > 0$ such that for all $i \in [N] \setminus \{p\}$,
\begin{equation}
\label{eq:dombarrier}
E_* := H(s_{p1}) - H(m_p) \ge H(s_{i1}) - H(m_i) + \delta.
\end{equation}
The dominating energy barrier $E_*$ is called the critical depth.
\end{enumerate}
\end{assump}
\begin{figure}[h]
\centering
\setlength{\unitlength}{0.7\textwidth}
  \begin{picture}(1,0.52092241)%

	\put(0,0){\includegraphics[width=\unitlength]{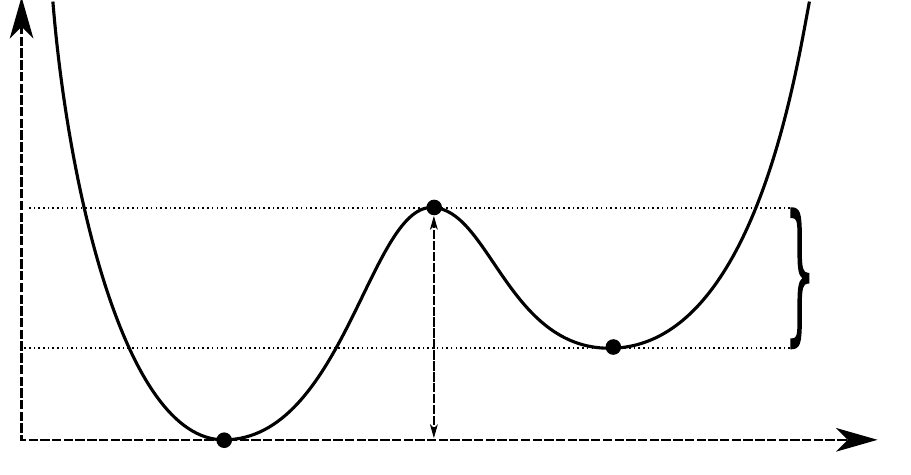}}%

	\put(0.425,0.305){\makebox(0,0)[lb]{\smash{saddle}}}%

	\put(0.43,0.08){\makebox(0,0)[lb]{\rotatebox{90}{\small{$\widehat{H}(m_1,m_2)$}}}}%
	\put(-0.002,0.02){\makebox(0,0)[lb]{\smash{0}}}%
	\put(0.9,0.2){\makebox(0,0)[lb]{\smash{$E_*$}}}%

	\put(0.58,0.1){\makebox(0,0)[lb]{\smash{local minima}}}%

	\put(0.65,0.155){\makebox(0,0)[lb]{\smash{$m_2$}}}%
	\put(0.14,-0.005){\makebox(0,0)[lb]{\smash{global minima}}}%

	\put(0.225,0.055){\makebox(0,0)[lb]{\smash{$m_1$}}}%
\end{picture}%
\caption{Illustration of the critical depth of a double-well function.}
\label{fig:CD}
\end{figure}

\subsection{The Eyring-Kramers formulas}
Our main results are the Eyring-Kramers formula for the Poincar\'e constant and a good estimate for log-Sobolev constant for the isa. Here a crucial new feature occurs in comparison to the over-damped Langevin dynamic: the lower temperature $\tau_1$ cannot be arbitrarily smaller than the higher temperature $\tau_2$ and there is an effective restriction on their ratio $\tau_1/\tau_2$. We comment on this observation in Subsection~\ref{s:necessary_lower_bound_tau1}. For ease of comparison, we begin by recalling the Eyring-Kramers formulas for the Poincar\'e and log-Sobolev constants for the Gibbs measure $\nu^{\tau}$, which is the invariant measure of a single diffusion at temperature $\tau$ governed by the over-damped Langevin equation \eqref{eq:odl}.

\begin{theorem}[Corollary 2.15 and 2.18 in \cite{MS14}] Assume $0 < \tau \ll 1$. Suppose that the energy landscape $H$ satisfies Assumptions \ref{assumeenv} and \ref{assumption}. Then the Gibbs measure $\nu^{\tau}$ satisfies the Poincar\'e inequality \eqref{eq:PIdef} with the constant $\rho$ satisfying
	\begin{align}\label{eq:PIconstant_single} \frac{1}{\rho} \leq \frac{1}{\rho^\tau}:=  \frac{2 \pi  \sqrt{|\det \nabla^2 H(s_{p1})|}}{\sqrt{|\det \nabla^2 H(m_p)|} |\lambda^{-}(s_{p1})|} \exp\biggl(\frac{H(s_{p1}) - H(m_p)}{\tau}\biggr) \left( 1 + O (\sqrt{\tau} \, \lvert\ln \tau\rvert^{\frac{3}{2}})  \right).
\end{align}
	Here $\lambda^{-}(s_{p1})$ is the negative eigenvalue of the Hessian $\nabla^2 H(s_{p1})$ at the communicating saddle point $s_{p1}$.
\end{theorem}

\begin{theorem}[Corollary 2.17 and 2.18 in \cite{MS14}] Assume $0 < \tau \ll 1$. Suppose that the energy landscape $H$ satisfies Assumptions \ref{assumeenvLSI} and \ref{assumption}. Then the Gibbs measure $\nu^{\tau}$ satisfies the log-Sobolev inequality \eqref{eq:LSIdef}
with the constant $\alpha$ satisfying 
\begin{equation}\label{eq:LSIconstant_single}
\frac{2}{\alpha} \leq \frac{2}{\alpha^\tau} := \left(\frac{H(m_p)}{\tau} + \log \sqrt{\frac{|\det \nabla^2 H(m_1)|}{|\det \nabla^2 H(m_p)|}} \right) \frac{1}{\rho^\tau}.
\end{equation}
where $\rho^\tau$ is defined in \eqref{eq:PIconstant_single}.
\end{theorem}

Now we are ready to state our main results.

\begin{theorem}[Eyring-Kramers formula for the Poincar\'e constant for the isa]
	\label{thm:PI}
	Assume that $\tau_2 \geq K \tau_1$ for some constant $K>1$. Let $\mu$ be the invariant measure of the isa defined by \eqref{eq:invmeasure}. Suppose that the energy landscape $H$ satisfies Assumptions \ref{assumeenv} and \ref{assumption}. Then the measure $\mu$ satisfies the Poincar\'e inequality
	\begin{equation}\label{eq:PI}
	\var_{\mu}(f) \le \frac{1}{\rho} \mathcal{E}_{\mu}(f)
	\end{equation}
	with the constant $\rho$ satisfying
	\begin{align}\label{eq:PIest}
	\frac{1}{\rho}  \leq \frac{1}{\rho^{\text{PI}}} := \frac{1}{\rho^{\tau_2}} + O(1) \Phi_n\Bigl(\frac{\tau_2}{\tau_1}\Bigr).
	\end{align}
	Here $\rho^{\tau_2}$ is given by the asymptotic formula \eqref{eq:PIconstant_single} with $\tau = \tau_2$, and $\Phi_n: [1, \infty) \rightarrow [0, \infty)$ is the function
	\begin{equation}\label{eq:temperature_dependence}
	\Phi_n(x) = \begin{cases} 1 & \text{if } n = 1, \\
	1 + \ln x & \text{if } n=2, \\
	1 + x^{(n-2)/2} & \text{if } n\geq 3.
	\end{cases}
	\end{equation}
\end{theorem}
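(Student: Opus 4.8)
\emph{Proof strategy.} The plan is to run the two–scale / transportation argument of \cite{MS14}, now for the inhomogeneous generator $\mathcal{L}$ and the symmetrized invariant measure $\mu=\tfrac12(\pi^++\pi^-)$. The first step is to locate the metastable wells of $\mu$ on $\mathbb{R}^n\times\mathbb{R}^n$. Since $\pi^\pm$ concentrate on the points $(m_i,m_j)$ with weight of order $\exp\bigl(-\min(H(m_i),H(m_j))/\tau_1-\max(H(m_i),H(m_j))/\tau_2\bigr)$, only the wells $\Omega_{11}$, $\Omega_{1j}$ and $\Omega_{j1}$ ($j\in[N]$) carry mass that is not exponentially small in $1/\tau_1$; all other $\Omega_{ij}$ have $\mu(\Omega_{ij})\lesssim e^{-\delta/\tau_1}$ by Assumption \ref{assumption}(i). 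Taking as partition these $O(N)$ wells together with a remainder $\Omega_0$ of measure $O(e^{-c/\tau_2})$, the decomposition of variance gives $\var_\mu(f)=\sum_k\mu(\Omega_k)\var_{\mu_k}(f)+\var_{\hat\mu}(\hat f)$ with $\hat\mu(k)=\mu(\Omega_k)$, $\hat f(k)=\E_{\mu_k}(f)$, so it suffices to bound (a) the local Poincaré constant of each $\mu_k$ against $\mathcal{E}_\mu$, and (b) the mean differences $(\hat f(k)-\hat f(l))^2$.

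For (a): on an off–diagonal well $\Omega_{1j}$ the nominally cold particle $x_1$ is pinned near $m_1$ and the hot particle $x_2$ near $m_j$, so on the bulk $H(x_1)<H(x_2)$, hence $\rho^+\approx1$, $a_1\asymp\tau_1$, $a_2\asymp\tau_2$ and $\mu\asymp\tfrac12\pi^+=\tfrac12\nu^{\tau_1}(x_1)\nu^{\tau_2}(x_2)$; tensorizing the local Poincaré inequalities for $\nu^{\tau_1}$ and $\nu^{\tau_2}$ from Assumption \ref{assumeenv} (via \cite[Theorem 2.9]{MS14}) yields a local constant of order $O(1)$, and symmetrically for $\Omega_{j1}$. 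On the diagonal well $\Omega_{11}$ the ratios $\rho^\pm$ transition across $\{H(x_1)=H(x_2)\}$ over a layer of width $\asymp\tau_1$; splitting $\Omega_{11}$ into the halves $\{H(x_1)<H(x_2)\}$ and $\{H(x_1)>H(x_2)\}$ gives an $O(1)$ local constant on each, while reconnecting them requires a weighted Hardy estimate across the layer whose cost is an integral of the form $\int_{|y|\lesssim\sqrt{\tau_2}}\frac{dy}{\tau_1+|y|^2}$, which reproduces $\Phi_n(\tau_2/\tau_1)$ (up to the power $\tau_1^{(n-2)/2}$ when $n\ge 3$, with the logarithmic behaviour for $n=2$ and a constant for $n=1$). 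Since $\mu(\Omega_{11})\asymp1$ while $\mu(\Omega_{1j}),\mu(\Omega_{j1})\lesssim e^{-\delta/\tau_2}$, the whole local contribution is $O(1)\,\Phi_n(\tau_2/\tau_1)$, the last term of \eqref{eq:PIest}. This diagonal/swap analysis is the principal new difficulty, and the source of the restriction $\tau_2\ge K\tau_1$.

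For (b): each mean difference is bounded by a weighted Cauchy–Schwarz along a tube $T_{kl}$ connecting the two wells through the relevant effective saddle, $(\hat f(k)-\hat f(l))^2\lesssim\bigl(\int_{T_{kl}}\tfrac{dx}{a\,\mu}\bigr)\mathcal{E}_\mu(f)$, with $a$ the diffusion coefficient active along the tube (and $\mu(\Omega_k)\wedge\mu(\Omega_l)$ the weight entering $\var_{\hat\mu}$). The decisive point is that to connect $\Omega_{j1}$ (cold particle at $m_j$, weight $\asymp e^{-H(m_j)/\tau_2}$) to $\Omega_{11}$ one transports $x_1$ from $m_j$ to $m_1$ along a path realizing the saddle height $H(s_{j1})$, with $x_2$ kept near $m_1$; along that path $H(x_1)>0=H(x_2)$, so $\rho^-\approx1$ and $a_1\asymp\tau_2$ there — the barrier is felt at the \emph{higher} temperature. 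A Laplace expansion at $(s_{j1},m_1)$ gives $\int_T\frac{dx}{a_1\mu}\asymp e^{H(s_{j1})/\tau_2}\,\frac{2\pi\sqrt{|\det\nabla^2 H(s_{j1})|}}{|\lambda^-(s_{j1})|\,\sqrt{|\det\nabla^2 H(m_1)|}}\bigl(1+O(\sqrt{\tau_2}|\ln\tau_2|^{3/2})\bigr)$, and multiplying by $\mu(\Omega_{j1})\asymp e^{-H(m_j)/\tau_2}$ produces $e^{(H(s_{j1})-H(m_j))/\tau_2}$ times the classical Eyring–Kramers prefactor; by Assumption \ref{assumption}(iii) this is maximal at $j=p$, which is exactly the leading term of \eqref{eq:PIest}. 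The remaining portions of the tubes (the swap region near the diagonal, where $a\asymp\tau_1+r^2$ at distance $r$) contribute only the $O(1)\Phi_n(\tau_2/\tau_1)$ correction already seen in (a), never another exponential. The $O(\sqrt{\tau_2}|\ln\tau_2|^{3/2})$ error is, as in \cite{MS14}, the combined effect of the Laplace expansions defining the wells, saddles and tubes and of the width of the cutoff functions. Combining (a) and (b) through the decomposition of variance yields \eqref{eq:PIest}.
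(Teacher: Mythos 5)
Your strategy is genuinely different from the paper's. The paper never builds a geometric partition of $\mathbb{R}^{n}\times\mathbb{R}^{n}$ into wells $\Omega_{ij}$; instead it uses the \emph{algebraic} mixture decomposition $\mu=\tfrac12(\pi^++\pi^-)$ with $\pi^\sigma=\sum_{ij}Z^\sigma_{ij}\pi^\sigma_{ij}$ and $\pi^\sigma_{ij}$ a \emph{product} of one-variable local Gibbs measures on $\Omega_i\times\Omega_j$ (Lemma~\ref{decompvar_new}). This is a deliberate dodge of the difficulty you run into: because each component $\pi^\sigma_{ij}$ is a product, the local Poincar\'e constants are $O(1)$ (Lemma~\ref{p_PI_prod}) with no diagonal-layer issue, and the entire cost of reconciling the ``$x_1$ cold / $x_2$ hot'' and ``$x_1$ hot / $x_2$ cold'' modes is shifted into a clean \emph{mean-difference estimate between} $\pi^+_{ij}$ \emph{and} $\pi^-_{ij}$ (Lemma~\ref{p_meandiff_prod}). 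That estimate is proved via Gaussian approximation and a \emph{radial} transport from scale $\sqrt{\tau_1}$ to scale $\sqrt{\tau_2}$ in a single copy of $\mathbb{R}^n$; the $\Phi_n$ factor is exactly $\int_{\sqrt{\tau_1}}^{\sqrt{\tau_2}} u^{-(n-1)}\,du$ re-normalized (Lemma~\ref{p_meandiff_truncated_Gaussian}). The mean-differences between different $(i,j)$ pairs are then organized as walks on a graph with ``vertical'', ``horizontal'' and ``swapping'' edges (Lemma~\ref{lem:meandiff_dominating}), the swapping edges carrying the $\Phi_n$ cost and the vertical/horizontal ones carrying the exponential factor at temperature $\tau_2$. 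Your part (b) --- transporting the cold particle over the barrier while it effectively sees temperature $\tau_2$ --- is the same mechanism and is correctly identified.

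The concrete gap is in your part (a). The heuristic ``weighted Hardy across the diagonal layer'' with cost $\int_{|y|\lesssim\sqrt{\tau_2}}\frac{dy}{\tau_1+|y|^2}$ does not produce $\Phi_n(\tau_2/\tau_1)$: with the natural codimension-one transverse coordinate it gives roughly $\tau_1^{-1/2}$, and with an $n$-dimensional $y$ it gives roughly $\tau_2^{(n-2)/2}$; neither is a function of the \emph{ratio} $\tau_2/\tau_1$, and you acknowledge this mismatch (``up to the power $\tau_1^{(n-2)/2}$'') without resolving it. The correct scaling comes from a radial transport singularity at the origin of a single $\mathbb{R}^n$ variable, not from a transverse Hardy estimate across $\{H(x_1)=H(x_2)\}$ in $\mathbb{R}^{2n}$; to make your geometric partition work one would still have to prove a local Poincar\'e inequality for $\mu|_{\Omega_1\times\Omega_1}$ with respect to the inhomogeneous Dirichlet form, and the right way to do that (approximating by truncated Gaussians as in Lemma~\ref{p_meandiff_approx}) essentially reconstitutes the paper's mixture-plus-swap argument. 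Two minor additional issues: the remainder $\Omega_0$ with mass $O(e^{-c/\tau_2})$ is not an exponentially negligible afterthought --- you still must prove a Poincar\'e inequality on it and connect it to the other wells, which the paper avoids by covering $\mathbb{R}^n$ exactly (up to measure zero); and the Eyring--Kramers \emph{prefactor} (the exact constant, not just the exponent) is taken for granted in your Laplace expansion, whereas the paper obtains it via the admissibility condition \eqref{e_spGM_admissible} together with \cite[Theorem 2.12]{MS14}.
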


\begin{theorem}[Estimate for the log-Sobolev constant of the isa]
	\label{thm:LSI}
	Assume that~$\tau_2 \geq K \tau_1$ for some constant~$K>1$. Let $\mu$ be the invariant measure of the isa defined by \eqref{eq:invmeasure}. Suppose that the energy landscape $H$ satisfies Assumptions \ref{assumeenvLSI} and \ref{assumption}.
	Then the measure $\mu$ satisfies the log-Sobolev inequality
	\begin{equation}
	\label{eq:LSI}
	\ent_{\mu}(f) := \int f \ln f \, d \mu - \int f \, d \mu \ln \int f \, d \mu \le \frac{1}{\alpha} \mathcal{I}_{\mu}(f),
	\end{equation}
	so that $\ent_{\mu}(f^2)\leq \frac{2}{\alpha} \mathcal{E}_{\mu}(f)$ with 
	\begin{align}
	\label{eq:LSIest}
	\frac{2}{\alpha} \leq \frac{2}{\alpha^{\text{LSI}}} := 2N^2  \left(\frac{H(m_p)}{\tau_1}+\frac{H(m_p)}{\tau_2}\right) \frac{1}{\rho^{\tau_2}} + O\left(\frac{1}{\tau_1}\right) \Phi_n\Bigl(\frac{\tau_2}{\tau_1}\Bigr).
	\end{align}
	Here $N$ is the number of local minima of $H$, $\rho^{\tau_2}$ is given by the asymptotic formula \eqref{eq:PIconstant_single} with $\tau = \tau_2$, and $\Phi_n$ is the function defined in \eqref{eq:temperature_dependence}.
\end{theorem}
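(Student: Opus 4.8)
The plan is to derive the log-Sobolev inequality \eqref{eq:LSI} from the Poincar\'e inequality of Theorem~\ref{thm:PI} via the local-to-global (two-scale) scheme of \cite{MS14}. This is legitimate because Assumption~\ref{assumeenvLSI} implies Assumption~\ref{assumeenv}, so Theorem~\ref{thm:PI} applies, and moreover the stronger growth hypothesis gives a local LSI (with constant of order $\tau$) for the single-temperature restricted Gibbs measures $\nu^\tau|_{\Omega_i}$, as in \cite{MS14}. Let $\Omega_1,\dots,\Omega_N\subseteq\mathbb{R}^n$ be the open basins of attraction of the minima $m_1,\dots,m_N$ for the gradient flow of $H$, and partition the state space into the $N^2$ product cells $\Omega_{ij}:=\Omega_i\times\Omega_j$ (the partition underlying the proof of Theorem~\ref{thm:PI}). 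Writing $Z_{ij}:=\mu(\Omega_{ij})$, $\mu_{ij}:=\mu(\,\cdot\mid\Omega_{ij})$ and $\bar\mu:=(Z_{ij})_{i,j}$ for the coarse-grained probability on $[N]^2$, the entropy chain rule gives, for nonnegative $f$,
\begin{equation*}
\ent_\mu(f)=\sum_{i,j}Z_{ij}\,\ent_{\mu_{ij}}(f)+\ent_{\bar\mu}\bigl((P_{ij}f)_{i,j}\bigr),\qquad P_{ij}f:=\frac{1}{Z_{ij}}\int_{\Omega_{ij}}f\,d\mu .
\end{equation*}
I would bound the first (microscopic) sum by a local LSI on each cell and the second (macroscopic) term by a finite-dimensional LSI, reducing the latter to weighted mean differences controlled by $\mathcal E_\mu(\sqrt f)=\tfrac12\mathcal I_\mu(f)$ through the transport estimates of the proof of Theorem~\ref{thm:PI}.

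For the microscopic sum: on $\Omega_{ij}$ the conditional measure $\mu_{ij}$ is, up to an overall weight, a mixture of the two restricted products $\pi^\pm(\,\cdot\mid\Omega_{ij})$, each equal to $\nu^{\tau_a}|_{\Omega_i}\otimes\nu^{\tau_b}|_{\Omega_j}$ with $\{a,b\}=\{1,2\}$. When $H(m_i)\ne H(m_j)$ one of the two products dominates on the region where $\mu_{ij}$ concentrates; tensorising the local LSI of \cite{MS14} for the factors (together with a Holley--Stroock perturbation) gives $\ent_{\mu_{ij}}(f)\lesssim \tau_1\!\int|\nabla_{x_1}\!\sqrt f|^2 d\mu_{ij}+\tau_2\!\int|\nabla_{x_2}\!\sqrt f|^2 d\mu_{ij}$, and since there $a_1\simeq\tau_1$, $a_2\simeq\tau_2$ (or vice versa) the right-hand side is comparable to $\mathcal E_{\mu_{ij}}(\sqrt f)=\tfrac12\mathcal I_{\mu_{ij}}(f)$. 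Summing over cells, $\sum_{i,j}Z_{ij}\ent_{\mu_{ij}}(f)\le O(1)\,\mathcal I_\mu(f)$, so the microscopic sum only contributes $O(1)$ to $2/\alpha$. On the ``balanced'' cells with $H(m_i)=H(m_j)$, where $\mu_{ij}$ is a nearly equal mixture of two Gaussian-type measures with interchanged covariances, one needs a direct LSI estimate for such a mixture (or a two-set decomposition); this is a minor point not affecting the leading order.

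The macroscopic term is the heart of the matter. Here $\bar\mu$ lives on $[N]^2$, and one applies the finite-space log-Sobolev inequality in the refined, cell-by-cell form of \cite{MS14}: $\ent_{\bar\mu}((P_c f)_c)$ is bounded by a sum of costs for transporting mass between cells, each cost estimated by the weighted transport inequalities of the proof of Theorem~\ref{thm:PI} (so it is of order $e^{(\mathrm{barrier})/\tau_2}$ in the metastable regime) and amplified by a factor $\log(1/Z_c)$ attached to the cell $c$ being filled. The structural fact producing the exponential gain over the overdamped Langevin dynamics is that the coordinate sitting at the higher of the two minima defining a cell has diffusion coefficient $\tau_2(1+o(1))$, hence escapes at rate $\ge e^{-(E_*-\delta)/\tau_2}$ by Assumption~\ref{assumption}(iii) (unless that minimum is $m_p$ itself); consequently the transport cost attains the dominant order $e^{E_*/\tau_2}$ — the Eyring--Kramers factor of \eqref{eq:PIest} for $s_{p1}$ — only on the cells $\Omega_p\times\Omega_k$ and $\Omega_k\times\Omega_p$ with $H(m_k)\le H(m_p)$, and among those the largest value of $(\text{transport cost})\times\log(1/Z_c)$ is attained at $c=\Omega_p\times\Omega_p$. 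Since $\pi^+=\pi^-$ at $(m_p,m_p)$ with common value proportional to $\exp(-H(m_p)/\tau_1-H(m_p)/\tau_2)$, one has $\log(1/\mu(\Omega_p\times\Omega_p))=H(m_p)/\tau_1+H(m_p)/\tau_2+O(|\ln\tau_1|)$, and together with the combinatorial factor $N^2$ (the number of cells) and a bounded factor from the path/spanning-tree reduction this yields the leading term of \eqref{eq:LSIest}. The inter-replica (swap) moves $\Omega_{ij}\to\Omega_{ji}$ reproduce the term already present in \eqref{eq:PIest} as $O(1)\Phi_n(\tau_2/\tau_1)$, here multiplied by a further $\log(1/Z)$ factor bounded crudely by $O(\tau_1^{-1})$, which accounts for the error term $O(\tau_1^{-1})\Phi_n(\tau_2/\tau_1)$; the shallow high-lying wells are of strictly lower exponential order and are absorbed into the relative error of the leading term.

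The main obstacle is this macroscopic estimate. One must implement the refined, cell-by-cell LSI criterion of \cite{MS14} rather than the crude bound $\alpha^{-1}\lesssim\rho^{-1}\log(1/\min_c Z_c)$, which would wrongly produce $H(m_N)$ in place of $H(m_p)$; the gain rests on Assumption~\ref{assumption}(iii) through the observation that the coordinate at the higher minimum of a cell is ``hot'' and therefore fast whenever that minimum is not the critical one. Equally delicate are the underlying weighted transport inequalities along paths in which a coordinate runs partly at the cold coefficient $\tau_1$ — while its energy is below that of its partner — and partly at the hot coefficient $\tau_2$: one has to check that the cold stretch contributes no factor exponential in $1/\tau_1$, and it is here that the standing hypothesis $\tau_2\ge K\tau_1$ enters. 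The remaining ingredients — the local LSI on the balanced cells and the bookkeeping of the $\Phi_n(\tau_2/\tau_1)$ factor for the swap moves — are routine once the corresponding estimates for Theorem~\ref{thm:PI} are in place.
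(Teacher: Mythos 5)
Your plan captures the correct high-level architecture (local LSI on each piece plus a finite-dimensional macroscopic LSI controlled by hot-coordinate transport and the logarithmic-mean weights, yielding the factors $N^2$ and $H(m_p)(\tau_1^{-1}+\tau_2^{-1})$), but it rests on a decomposition that is genuinely different from — and technically weaker than — the one the paper actually uses, and the difference is not cosmetic. The paper does not partition $\mathbb{R}^n\times\mathbb{R}^n$ into the $N^2$ cells $\Omega_i\times\Omega_j$ and condition $\mu$ on them; it writes $\mu$ as the $2N^2$-component \emph{mixture} $\mu=\sum_{(i,j)}\tfrac12 Z^+_{ij}\pi^+_{ij}+\sum_{(i,j)}\tfrac12 Z^-_{ij}\pi^-_{ij}$, where each $\pi^\sigma_{ij}$ is a genuine product of single-temperature local Gibbs measures. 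The two components $\pi^+_{ij},\pi^-_{ij}$ share the support $\Omega_i\times\Omega_j$, so the relevant entropy decomposition is the overlapping-mixture version with logarithmic means $\Lambda$ (Lemma~\ref{decompent_new}), not the exact conditional chain rule you invoke for a disjoint partition. (In the same vein, the $\Omega_i$ in the paper are basins of the perturbed potential $H_{\tau_2}$, not of $H$ itself — that perturbation is needed for the Lyapunov-function Neumann condition.)

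This structural choice is precisely what makes the two key steps go through, and both fail in your $N^2$-cell version as stated. First, the local LSI. Your conditional measure $\mu_{ij}=\mu(\cdot\mid\Omega_i\times\Omega_j)$ is not a product; it is an internal mixture of $\pi^+_{ij}$ and $\pi^-_{ij}$ whose density ratio $d\pi^-_{ij}/d\pi^+_{ij}=\exp\bigl((\tau_1^{-1}-\tau_2^{-1})[(H(x_1)-H(m_i))-(H(x_2)-H(m_j))]\bigr)$ is \emph{not} bounded on $\Omega_i\times\Omega_j$ (it blows up near the cell boundary), so Holley--Stroock does not give you an $O(\tau_1^{-1})$ LSI for $\mu_{ij}$ by comparison with the dominant product, even on the ``unbalanced'' cells. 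You dismiss the balanced cells as a minor point; in fact, in your scheme every cell has this issue, and it is where the $\Phi_n(\tau_2/\tau_1)$-dependence of Lemma~\ref{p_meandiff_prod} has to enter. The paper sidesteps this entirely by keeping the $+$ and $-$ components separate: each $\pi^\sigma_{ij}$ is an honest product, tensorisation gives LSI$(O(\tau_1^{-1}))$ outright (Lemma~\ref{p_LSI_prod}), and the $+/-$ interaction is pushed into the macroscopic transport as ``swapping edges''. Second, and more fundamentally, the macroscopic transport. Your strategy ``transport only in the hot coordinate'' needs each piece to have a well-defined hot coordinate, which $\pi^+_{ij}$ (hot coordinate $x_2$) and $\pi^-_{ij}$ (hot coordinate $x_1$) do, but $\mu_{ij}$ does not: to bound a mean-difference $\E_{\mu_{ij}}f-\E_{\mu_{kl}}f$ you would have to split $\mu_{ij}$ back into its $+$ and $-$ pieces before you can invoke the one-coordinate transport of \cite[Theorem~2.12]{MS14} — at which point you have reconstructed the paper's $2N^2$-component decomposition.

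In short, the idea and the identification of the dominant terms are right, but the ``$N^2$-cell chain rule'' route does not close as written; making it rigorous would force you back to the overlapping $2N^2$-mixture decomposition of Lemmas~\ref{decompent_new}, \ref{p_LSI_prod} and \ref{lem:meandiff_dominating_LSI}, which is what the paper does, and which also explains why the $\Phi_n(\tau_2/\tau_1)$ error appears multiplied by $O(\tau_1^{-1})$ (the logarithmic-mean weight of the swapping moves along the transport paths) rather than by the $O(1)$ weight your diagonal-cell accounting would suggest.
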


A simple calculation shows that the terms involving $\Phi_n$ are asymptotically negligible compared to the rest of these formulas, provided $\tau_1$ is not too small compared to $\tau_2$:

\begin{corollary} Impose the condition that
	\begin{equation}\label{eq:condition_for_temperatures}
	\frac{1}{\tau_1} \leq \begin{cases}    \exp{\left(o\left(\frac{1}{\tau_2}\right)\right)} \quad & \text{if } n\geq 3, \\
\exp{\left(\exp{\left(o\left(\frac{1}{\tau_2}\right)\right)}\right)} \quad & \text{if }  n = 2.
	\end{cases}
	\end{equation}	
	Then, with the assumptions of Theorem \ref{thm:PI}, the measure $\mu$ satisfies the Poincar\'e inequality \eqref{eq:PI} with constant $\rho$ satisfying 
		\begin{equation}\label{eq:PIest2} \frac{1}{\rho}  \leq \frac{1}{\rho^{\tau_2}},
		\end{equation}
	and with the assumptions of Theorem \ref{thm:LSI}, the measure $\mu$ satisfies the log-Sobolev inequality \eqref{eq:LSI} with constant $\alpha$ satisfying 
	\begin{equation}\label{eq:LSIest2}\frac{2}{\alpha}  \leq 2N^2  \left(\frac{H(m_p)}{\tau_1}+\frac{H(m_p)}{\tau_2}\right) \frac{1}{\rho^{\tau_2}}.
	\end{equation}
	Here $\rho^{\tau_2}$ is given by the asymptotic formula \eqref{eq:PIconstant_single} with $\tau = \tau_2$.
\end{corollary}

\begin{remark}\label{r_ek_compared} \em{ Comparing the Eyring-Kramers formulas \eqref{eq:PIest2} and \eqref{eq:LSIest2} for the isa at temperatures $(\tau_1, \tau_2)$ to the corresponding formulas \eqref{eq:PIconstant_single} and \eqref{eq:LSIconstant_single} derived for a single diffusion at the lower temperature $\tau_1$, the main difference is that in the exponent $\frac{H(s_{p1}) - H(m_p)}{\tau_1}$, the lower temperature $\tau_1$ is now replaced by the higher temperature $\tau_2$ ,as long as $\frac{1}{\tau_1}$ grows sub-exponentially as $\frac{1}{\tau_2}$ in the limit $\tau_1, \tau_2 \rightarrow 0$. Since we assume $\tau_2\geq K\tau_1$ for some constant $K > 1$, this means the energy barrier $H(s_{p1}) - H(m_p)$ is effectually reduced by a factor of $K > 1$.
}

\end{remark}

\subsection{Dependence on the ratio between temperatures}\label{s:necessary_lower_bound_tau1}
The following proposition shows that the dependence on $\tau_2/\tau_1$ in the Poincar\'e and LSI constants of the isa is necessary and the function $\Phi_n$ that describes this dependence is nearly optimal. 
\begin{proposition}\label{p_opt_local} If $\tau_2, \tau_1/\tau_2$ are sufficiently small, then there exists a constant $C > 0$ and for every $\eta > 0$, there exists a constant $C_\eta > 0$, such that
	\begin{align*}
	\sup_{f\in H^1(\mu)} \frac{\var_{ \mu}(f)} {\mathcal{E}_{\mu}(f)} \geq \begin{cases} C_\eta (\tau_2/\tau_1)^{(1-\eta) (n-2)/2} & \text{for } n\geq 3, \\
	C \ln(\tau_2/\tau_1) & \text{for } n = 2.
	\end{cases} 
	\end{align*}
\end{proposition}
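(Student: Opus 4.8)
The plan is to exhibit, for each dimension $n \geq 2$, an explicit test function $f$ whose Rayleigh quotient $\var_\mu(f)/\mathcal{E}_\mu(f)$ grows like the claimed power of $\tau_2/\tau_1$. The key observation is that the diffusion coefficients $a_1,a_2$ degenerate on the diagonal $\{x_1=x_2\}$ in the following sense: there $\pi^+=\pi^-$, so $\rho^+=\rho^-=\tfrac12$ and hence $a_1=a_2=\tfrac{\tau_1+\tau_2}{2}$; but away from the diagonal, in the region where $H(x_1)$ is much smaller than $H(x_2)$ (so $x_1$ sits near the global minimum at the low temperature), one has $\rho^+\approx 1$, hence $a_1\approx \tau_1$ and $a_2\approx\tau_2$. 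Thus the coefficient $a_1$ controlling the $x_1$-gradient can be as small as $\tau_1$, which is tiny. The strategy is to build $f$ that depends essentially only on $x_1$ and varies on the scale governed by $\sqrt{\tau_1}$ (the natural width of $\nu^{\tau_1}$ near its minimum), localized to the well around $m_1$, so that $\mathcal{E}_\mu(f)=\int (a_1|\nabla_{x_1}f|^2 + a_2|\nabla_{x_2}f|^2)\,d\mu$ is of order $\tau_1$ times a Gaussian-type integral, while $\var_\mu(f)$ stays order one. Heuristically this already gives a Rayleigh quotient of order $1/\tau_1$, which is much larger than the Eyring--Kramers value; but the proposition asks for the sharper dependence on the \emph{ratio} $\tau_2/\tau_1$, so the test function must be chosen more carefully to extract exactly the factor $(\tau_2/\tau_1)^{(n-2)/2}$.

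The heart of the matter is a local computation near the diagonal point $(m_1,m_1)$. After a linear change of coordinates diagonalizing $\nabla^2 H(m_1)$, the measure $\mu$ restricted to a neighborhood of $(m_1,m_1)$ is, up to lower-order corrections, the symmetrization $\tfrac12(g^+ + g^-)$ of two anisotropic Gaussians, where $g^+$ has $x_1$-marginal of width $\sqrt{\tau_1}$ and $x_2$-marginal of width $\sqrt{\tau_2}$ (and $g^-$ the reverse). I would pass to rescaled variables $y_1 = x_1/\sqrt{\tau_1}$, $y_2 = x_2/\sqrt{\tau_2}$ (plus the swapped version for $g^-$): in these variables the Gaussian becomes standard, while the diffusion operator picks up the anisotropy, and the ``bottleneck'' of the measure $\mu$ is the region near $\{x_1=x_2\}$, which in rescaled variables is a slanted subspace. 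The natural test function is then something like a smoothed indicator of one side of this bottleneck, or more precisely $f$ = a function of the ``slow'' variable measuring position relative to the diagonal; the set where $\pi^+$ and $\pi^-$ are comparable has $\mu$-measure of order $(\tau_1/\tau_2)^{(n-?)/2}$ coming from the mismatch of scales in $n-1$ transverse directions (one direction along the diagonal remains order one, which is why the exponent is $(n-2)/2$ and the $n=1$ case is excluded). Estimating $\var_\mu(f)$ from below (it is order one because $f$ genuinely separates two regions of comparable mass after symmetrization) and $\mathcal{E}_\mu(f)$ from above (the gradient is supported on the thin bottleneck, where $a_1$ is of order $\tau_1$, and the volume of that region contributes the small factor) yields the bound. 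The $\eta$ in the exponent absorbs the logarithmic corrections coming from where exactly one truncates the Gaussian tails.

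The main obstacle, as I see it, is twofold. First, the measure $\mu$ is not Gaussian but only Gaussian-like near $(m_1,m_1)$, and the region of interest — the bottleneck $\{x_1\approx x_2\}$ — is not a small neighborhood of a point but an extended set along the diagonal, so controlling the non-Gaussian corrections uniformly along this set requires care (one likely restricts to a ball of radius $\sim\sqrt{\tau_2}|\ln\tau_2|^{1/2}$ around $m_1$ and argues the contribution from outside is negligible, as in the upper-bound proofs). Second, and more delicate, is the precise geometry of the set where $a_1$ is small: one needs $\rho^+$ close to $1$, i.e. $H(x_1)/\tau_1 + H(x_2)/\tau_2 \ll H(x_2)/\tau_1 + H(x_1)/\tau_2$, which amounts to $H(x_2) - H(x_1) \gg \tau_1\tau_2/(\tau_2-\tau_1)$; quantifying how much $\mu$-mass sits in the transition layer between ``$a_1\approx\tau_1$'' and ``$a_1\approx\frac{\tau_1+\tau_2}{2}$'', and designing $f$ so that its Dirichlet energy is genuinely concentrated where $a_1$ is smallest, is where the exact exponent $(n-2)/2$ is won or lost. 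I expect the cleanest route is to reduce, via these rescalings, to a model problem: the Poincaré constant of the symmetrized two-Gaussian measure $\tfrac12(\mathcal{N}(0,\mathrm{diag}(\tau_1,\tau_2)) + \mathcal{N}(0,\mathrm{diag}(\tau_2,\tau_1)))$ in $\mathbb{R}^2$ (times the trivial transverse factors), and compute a lower bound there explicitly with a one-parameter family of test functions, optimizing over the parameter to land on $(\tau_2/\tau_1)^{(n-2)/2}$ up to the $\eta$-loss.
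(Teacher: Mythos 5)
Your opening move---test functions depending only on $x_1$, reducing the problem to the two-temperature mixture $\bar\mu = \tfrac12(\nu^{\tau_1} + \nu^{\tau_2})$---is exactly where the paper starts, and it is the right idea. But the rest of the plan takes a wrong turn and the supporting heuristics do not hold up. Because $\mu = \tfrac12(\pi^+ + \pi^-)$ and $f=f(x_1)$ only, the Dirichlet form collapses exactly to $\mathcal{E}_\mu(f) = \tfrac12\,\tau_1 \E_{\nu^{\tau_1}}|\nabla f|^2 + \tfrac12\,\tau_2 \E_{\nu^{\tau_2}}|\nabla f|^2$; there is no need, and no gain, in estimating $a_1$ pointwise, since the $\pi^-$ half of $\mu$ always contributes $a_1\approx\tau_2$ with $x_1$ spread on scale $\sqrt{\tau_2}$. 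In particular, a bump $f_\varepsilon(x_1)=h(|x_1-m_1|/\sqrt{\varepsilon})$ at scale $\varepsilon=\tau_1$, as you propose, gives $\tau_1\E_{\nu^{\tau_1}}|\nabla f_\varepsilon|^2 = O(1)$ (not $O(\tau_1)$), so the Rayleigh quotient stays bounded; the claimed $1/\tau_1$ heuristic is false. The correct choice is $\varepsilon=\tau_1^{1-\eta}\tau_2^\eta$, strictly between $\tau_1$ and $\tau_2$: the annulus $\{|\nabla f_\varepsilon|>0\}$ then sits in the Gaussian tail of $\nu^{\tau_1}$ (killing the first Dirichlet term exponentially), $\tau_2\E_{\nu^{\tau_2}}|\nabla f_\varepsilon|^2 \sim (\varepsilon/\tau_2)^{(n-2)/2}$, and the variance cross term $(\E_{\nu^{\tau_1}}f_\varepsilon - \E_{\nu^{\tau_2}}f_\varepsilon)^2$ stays of order one; the $\eta$ is the loss from having to stay in the $\nu^{\tau_1}$ tail. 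For $n=2$ the paper uses a profile $h$ with a power-law tail of exponent $\alpha\sim 1/\ln(\tau_2/\tau_1)$ to extract the log.

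The subsequent ``bottleneck near the diagonal $\{x_1=x_2\}$'' picture is a red herring and contradicts your own first reduction: once $f$ depends only on $x_1$, a function of the position relative to the diagonal is not available. The exponent $(n-2)/2$ does not come from ``$n-1$ transverse directions minus one along the diagonal.'' The equal-energy surface $\{H(x_1)\approx H(x_2)\}$ is codimension one, and the set where $\pi^+\approx\pi^-$ has $\mu$-measure $\sim(\tau_1/\tau_2)^{n/2}$, not $(\tau_1/\tau_2)^{(n-2)/2}$. The exponent is simply the Gaussian scaling of $\tau_2\E_{\nu^{\tau_2}}|\nabla f_\varepsilon|^2$: density $\sim\tau_2^{-n/2}$, volume $\sim\varepsilon^{n/2}$, $|\nabla f_\varepsilon|^2\sim\varepsilon^{-1}$, and a factor $\tau_2$. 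For the same reason, your proposed model problem ``$\tfrac12(\mathcal{N}(0,\mathrm{diag}(\tau_1,\tau_2)) + \mathcal{N}(0,\mathrm{diag}(\tau_2,\tau_1)))$ in $\mathbb{R}^2$ times trivial transverse factors'' cannot capture the exponent: that is the case $n=1$, where $\Phi_1\equiv 1$ and no blowup in $\tau_2/\tau_1$ occurs, consistent with the proposition being stated only for $n\geq 2$.
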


\subsection{Optimality of the Eyring-Kramers formulas in dimension one}

For the over-damped Langevin dynamics, the corresponding Eyring-Kramers formula for Poincar\'e inequality has been shown to be optimal. For the isa, the Poincar\'e constant of \eqref{eq:PIest} is optimal in a generic one-dimensional case. This gives a strong indication of optimality in higher dimensions.
For notational simplicity, we will henceforth write $A\lessapprox B$ if
\begin{equation}\label{eq:approx_notation}
A\leq B \left( 1 + O (\sqrt{\tau_2} \, \lvert\ln \tau_2\rvert^{\frac{3}{2}})  \right)
\end{equation}
i.e. up to multiplicative errors occurring in~\eqref{eq:PIconstant_single} with $\tau = \tau_2$. We write $A \approx B$ if $A\lessapprox B$ and $B\lessapprox A$. 

\begin{proposition}\label{p_opt_EK_1d} 
Assume $n = 1$, and $H$ has three critical points: two minima $m_1 < m_2$ with $H(m_1) = 0 < \delta \leq H(m_2)$ and a local maximum $s$ in between. Then
	\begin{align*}
	\sup_{f\in H^1(\mu)} \frac{\var_{ \mu}(f)} {\mathcal{E}_{\mu}(f)} \gtrapprox \frac{1}{\rho^{\PI}},
	\end{align*}
where $\rho^{\PI}$ is given by the asymptotic formula \eqref{eq:PIest} and $H^1(\mu):= \{f: \int_{\mathbb{R}^n} |\nabla f|^2 d\mu < \infty\}$
\end{proposition}
For the over-damped Langevin dynamics, the corresponding Eyring-Kramers formula for LSI inequality has been shown to be optimal in the one-dimensional case. For the isa, we do not expect the LSI constant of \eqref{eq:LSIest} to be optimal. However, up to the combinatorial pre-factor in the number of local minima $N$, it captures the asymptotic behavior for a generic one-dimensional case. 
\begin{proposition}\label{p_opt_EK_1d_LSI} 
	Assume $n = 1$, and $H$ has three critical points: two minima $m_1 < m_2$ with $H(m_1) = 0 < \delta \leq H(m_2)$ and a local maximum $s$ in between. Then
	\begin{align*}
	\sup_{f\in H^1(\mu)} \frac{\ent_{\mu}(f^2)} {\mathcal{I}_{\mu}(f^2)} \gtrapprox \frac{1}{\alpha^{\LSI}},
	\end{align*}
	where $\alpha^{\LSI}$ is given by the asymptotic formula \eqref{eq:LSIest}. 
\end{proposition}

\subsection{Application to sampling}

It is well known that estimates on the Poincar\'e and the log-Sobolev constant yield estimates for the rate of convergence to equilibrium of the underlying process. 
Applying this to the isa, we obtain the following direct consequence of Theorem \ref{thm:PI} and Theorem \ref{thm:LSI}. 
We refer to \cite[Theorem 1.7]{Schthesis} for a proof in the setting of the over-damped Langevin dynamics. 
The argument directly carries over to the isa.
\begin{corollary}
Let $f_t$ be the relative density of the isa \eqref{eq:infswap} at time $t$ with respect to the invariant measure $\mu$. 
\begin{enumerate}[itemsep = 3 pt]
\item[(i)]
Under the same assumptions as in Theorem \ref{thm:PI}, it holds that
\begin{equation*}
\var_{\mu}(f_t) \le e^{-2 \rho t} \var_{\mu}(f_0),
\end{equation*}
where $\rho$ satisfies the estimate \eqref{eq:PIest}.
\item[(ii)]
Under the same assumptions as in Theorem \eqref{thm:LSI}, it holds that
\begin{equation*}
\ent_{\mu}(f_t) \le e^{-2 \alpha t} \ent_{\mu}(f_0),
\end{equation*}
where~$\alpha$ satisfies the estimate \eqref{eq:LSIest}.
\end{enumerate}
\end{corollary}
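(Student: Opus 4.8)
The plan is the classical semigroup argument: differentiate the variance (resp.\ entropy) of $f_t$ along the flow, identify the derivative with $-2$ times the Dirichlet form (resp.\ the Fisher information), close a Gr\"onwall inequality using Theorem~\ref{thm:PI} (resp.\ Theorem~\ref{thm:LSI}), and integrate. All of the analytic work has already gone into the two Eyring--Kramers estimates, so what is left is the same bookkeeping as in the overdamped Langevin case of \cite[Theorem~1.7]{Schthesis}.

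First I would record the structural facts that make the argument run. Since $\mu = \tfrac12(\pi^+ + \pi^-)$ is the reversible invariant measure of $\mathcal{L} = \rho^+ L_+ + \rho^- L_-$, the evolution of the relative density is $\partial_t f_t = \mathcal{L} f_t$, and the associated semigroup $P_t = e^{t\mathcal{L}}$ is Markovian and self-adjoint on $L^2(\mu)$; in particular it conserves mass, so $\int f_t\, d\mu = \int f_0\, d\mu = 1$ for all $t$, and the constant $\int f_t\, d\mu$ drops out when differentiating $\var_{\mu}(f_t)$ and $\ent_{\mu}(f_t)$. I would also use the identities $\mathcal{E}_{\mu}(g) = \int(-\mathcal{L}g)g\, d\mu = \int \sum_{k=1}^2 a_k |\nabla_{x_k} g|^2\, d\mu$ and $\int \mathcal{L}g\, d\mu = 0$ that follow from the explicit Dirichlet form.

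For the Poincar\'e part, $\tfrac{d}{dt}\var_{\mu}(f_t) = 2\int f_t\, \mathcal{L} f_t\, d\mu = -2\mathcal{E}_{\mu}(f_t) \le -2\rho\,\var_{\mu}(f_t)$ by~\eqref{eq:PI}, and Gr\"onwall's lemma gives $\var_{\mu}(f_t) \le e^{-2\rho t}\var_{\mu}(f_0)$. For the entropy part, using $\int \mathcal{L} f_t\, d\mu = 0$ and $\ent_{\mu}(f_t) = \int f_t\ln f_t\, d\mu$,
\begin{equation*}
\tfrac{d}{dt}\ent_{\mu}(f_t) = \int (\ln f_t)\, \mathcal{L} f_t\, d\mu = -\int \sum_{k=1}^2 a_k \frac{|\nabla_{x_k} f_t|^2}{f_t}\, d\mu = -2\,\mathcal{I}_{\mu}(f_t),
\end{equation*}
where the last equality is the definition~\eqref{eq:Fisher} of the Fisher information (equivalently $|\nabla_{x_k} f_t|^2/f_t = 4|\nabla_{x_k}\sqrt{f_t}|^2$ together with $\mathcal{I}_{\mu}(g^2) = 2\mathcal{E}_{\mu}(g)$); then~\eqref{eq:LSI} yields $\tfrac{d}{dt}\ent_{\mu}(f_t) \le -2\alpha\,\ent_{\mu}(f_t)$ and Gr\"onwall concludes $\ent_{\mu}(f_t) \le e^{-2\alpha t}\ent_{\mu}(f_0)$.

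The only genuine care needed, and the step I expect to be the main (though routine) obstacle, is the regularization justifying these formal computations: differentiation under the integral sign, and in the entropy case the manipulation near $\{f_t = 0\}$. This is handled exactly as in \cite{Schthesis}: one proves the estimate first for a dense class of well-behaved initial densities (e.g.\ $f_0$ bounded above and below away from $0$ with $\mathcal{E}_{\mu}(f_0) < \infty$), using parabolic/hypoelliptic regularity of $\mathcal{L}$ together with the growth assumptions on $H$ --- which also control the diffusion coefficients since $\tau_1 \le a_1, a_2 \le \tau_2$ --- and then removes the restriction by approximation and the lower semicontinuity of $\var_{\mu}$ and $\ent_{\mu}$. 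Given the explicit form of $\mu$ and the cited results, this presents no new difficulty beyond the overdamped case.
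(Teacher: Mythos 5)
Your proposal is correct and takes exactly the approach the paper has in mind: the paper does not write out a proof but instead cites the standard semigroup/Gr\"onwall argument from \cite[Theorem 1.7]{Schthesis} for overdamped Langevin and remarks that it ``directly carries over to the isa.'' You have simply spelled out that transfer — differentiating $\var_\mu(f_t)$ and $\ent_\mu(f_t)$, using the explicit reversible Dirichlet form of $\mathcal{L}$, matching the resulting dissipation terms to $-2\mathcal{E}_\mu$ and $-2\mathcal{I}_\mu$ under the paper's normalization $\mathcal{I}_\mu(f^2)=2\mathcal{E}_\mu(f)$, and closing with Theorems \ref{thm:PI} and \ref{thm:LSI} — together with the routine regularization remarks, so there is no substantive difference from the paper's intended argument.
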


Another well-known consequence is that the Poincar\'e or log-Sobolev constant allows to quantify the ergodic theorem i.e. to estimate speed of convergence of the time average to the ensemble mean. 
See \cite[Proposition 1.2]{CaGu08} and \cite[Corollary 4]{Wu00} for a proof in the setting of the over-damped Langevin dynamics. 
The same argument carries over to the isa.

\begin{corollary}\label{p_sampling}
Let $\nu$ denote the initial law of the isa \eqref{eq:infswap}.
\begin{enumerate}[itemsep = 3 pt]
\item
Under the same assumptions as in Theorem~\ref{thm:PI}, it holds that for all functions~$f: \mathbb{R}^n \times \mathbb{R}^n \to \mathbb{R}$ such that~$\sup|f|=1$, all~$0<R\leq 1$ and all $t>0$,
\begin{equation*}
  \mathbb{P}_{\nu}\!\left( \frac{1}{t} \int_0^t f(X_1(s), X_2(s)) \, ds - \int f \, d \mu \geq R \right) \leq \left\|  \frac{d \nu}{ d\mu} \right\|_{L^2} \exp\!\left(  - \frac{tR^2 \rho }{ 8  \var_{\mu}(f) }\right),
\end{equation*}
where~$\rho$ satisfies the estimate \eqref{eq:PIest}. 
\item
Under the same assumptions as in Theorem~\ref{thm:LSI}, it holds that for all functions~$f \in L^1(\mu)$ and all~$R,t>0$,
\begin{align*}
  \mathbb{P}_{\nu}\!\left( \frac{1}{t} \int_0^t f(X_1(s), X_2(s)) ds - \int f d \mu \geq R \right) \leq \left\|  \frac{d \nu}{ d\mu}\right\|_{L^2} \exp\bigl(  - t \alpha  H^* (R)\bigr),
\end{align*}
where~$\alpha$ satisfies the estimate \eqref{eq:LSIest} and
\begin{align*}
  H^*(R):= \sup_{\lambda \in \mathbb{R}} \left\{\lambda  R - \ln \int \exp\!\left( \lambda \Bigl( f - \int \! f \, d \mu \Bigr) \right) d \mu \right\}.
\end{align*}
\end{enumerate}
Similar bounds hold for the negative deviation. 
\end{corollary}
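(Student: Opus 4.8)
The plan is to follow the standard Feynman--Kac / Donsker--Varadhan route from functional inequalities to deviation bounds for additive functionals, as in \cite{CaGu08b, Wu00}, and to check that its hypotheses hold for the isa. Indeed, by the discussion in Section~\ref{s_isa_as_limit_of_pt} the infinite swapping process~\eqref{eq:infswap} is reversible with respect to~$\mu$, its generator~$\mathcal{L}$ is self-adjoint in~$L^2(\mu)$, and the associated Dirichlet form is the explicit~$\mathcal{E}_\mu$; moreover, by Theorems~\ref{thm:PI} and~\ref{thm:LSI}, $\mu$ satisfies PI$(\rho)$ and LSI$(\alpha)$ with $\rho,\alpha$ as in~\eqref{eq:PIest} and~\eqref{eq:LSIest}. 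First I would center~$f$ (replace it by $\tilde f := f - \int f\,d\mu$), apply the exponential Chebyshev inequality to $\frac1t\int_0^t \tilde f(X_s)\,ds$, change the reference measure from~$\nu$ to~$\mu$, and use Cauchy--Schwarz to factor out $\|d\nu/d\mu\|_{L^2(\mu)}$. This reduces the claim to an exponential-moment estimate under the stationary measure, namely controlling $\mathbb{E}_\mu\big[\exp\big(\int_0^t V(X_s)\,ds\big)\big]$ for $V = 2\lambda\tilde f$, $\lambda>0$.

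The core step is the Feynman--Kac bound $\mathbb{E}_\mu\big[\exp\big(\int_0^t V(X_s)\,ds\big)\big]\le e^{t\lambda(V)}$, where $\lambda(V) := \sup\big\{\int V g^2\,d\mu - \mathcal{E}_\mu(g) : g\in H^1(\mu),\ \|g\|_{L^2(\mu)}=1\big\}$ is the top of the spectrum of $\mathcal{L}+V$; here the self-adjointness of~$\mathcal{L}$ in~$L^2(\mu)$ is used. I would then bound $\lambda(V)$ by the relevant functional inequality. For the Poincar\'e statement, write $g = \bar g + (g-\bar g)$ with $\bar g := \int g\,d\mu$, expand $\int \tilde f g^2\,d\mu = \int \tilde f (g-\bar g)^2\,d\mu + 2\bar g\int \tilde f(g - \bar g)\,d\mu$, bound the first term by $\|\tilde f\|_{\infty}\,\var_\mu(g)$ and then by~\eqref{eq:PI}, bound the second by Cauchy--Schwarz together with~\eqref{eq:PI} applied to both~$\tilde f$ and~$g$, and optimize the resulting quadratic in $\sqrt{\mathcal{E}_\mu(g)}$; this gives $\lambda(2\lambda\tilde f)\lesssim \lambda^2 \var_\mu(f)/\rho$ for~$\lambda$ in a neighbourhood of~$0$. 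For the LSI statement, use instead the entropy duality $\int V g^2\,d\mu \le \ent_\mu(g^2) + \ln\int e^{V}\,d\mu$ (valid for $\|g\|_{L^2(\mu)}=1$) applied to~$sV$, bound $\ent_\mu(g^2)\le \alpha^{-1}\mathcal{I}_\mu(g^2) = 2\alpha^{-1}\mathcal{E}_\mu(g)$ by~\eqref{eq:LSI}, and choose~$s$ to cancel the Dirichlet-form term, which yields $\lambda(V)\le \tfrac{\alpha}{2}\ln\int e^{2V/\alpha}\,d\mu$.

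Finally I would feed these estimates into the Chebyshev bound and optimize over $\lambda>0$. In the Poincar\'e case the exponent becomes a quadratic in~$\lambda$ whose minimum produces the sub-Gaussian tail $\exp\big(-tR^2\rho/(8\var_\mu(f))\big)$ (the precise constant~$8$ requiring the slightly sharper form of the $\lambda(V)$-estimate and the restrictions $\sup|f|=1$, $0<R\leq1$, exactly as in \cite{CaGu08b}). In the LSI case the substitution $\theta = 2\lambda/\alpha$ turns the exponent into a negative multiple of $\theta R - \Lambda_{\tilde f}(\theta)$, with $\Lambda_{\tilde f}(\theta) := \ln\int e^{\theta\tilde f}\,d\mu$, so that taking the supremum over $\theta\in\mathbb{R}$ reproduces the Legendre transform $H^*(R)$ and, with the sharp constant as in \cite{Wu00}, the bound $\exp(-t\alpha H^*(R))$. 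The main obstacle I anticipate is the rigorous justification of the Feynman--Kac estimate $\mathbb{E}_\mu[\exp(\int_0^t V)]\le e^{t\lambda(V)}$ in this non-compact, inhomogeneous-diffusion setting --- finiteness of the exponential moment and identification of the principal eigenvalue with the Rayleigh quotient --- which one handles by a truncation and approximation argument (with an additional approximation to accommodate unbounded $f\in L^1(\mu)$ in the LSI case), as in \cite{CaGu08b, Wu00}. Everything else is the soft transcription of that argument to the isa, whose reversibility and explicit Dirichlet form make all the ingredients available.
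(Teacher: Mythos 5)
Your proposal follows exactly the route the paper itself indicates: the paper proves this corollary by citing \cite[Proposition 1.2]{CaGu08b} and \cite[Corollary 4]{Wu00} and noting that the argument carries over to the isa, and your write-up is precisely the Feynman--Kac/Donsker--Varadhan argument from those references, together with the correct identification of what must be verified for the isa (reversibility with respect to $\mu$, self-adjointness of $\mathcal{L}$ in $L^2(\mu)$, the explicit Dirichlet form $\mathcal{E}_\mu$, and PI/LSI from Theorems~\ref{thm:PI} and~\ref{thm:LSI}). The only difference is one of exposition: you unpack the argument (Chebyshev, change of measure, $\lambda(V)$-bound, optimization over $\lambda$) that the paper leaves implicit by reference, so there is no substantive divergence.
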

One consequence of Corollary \ref{p_sampling} is that the isa has an exponential gain in comparison with the over-damped Langevin dynamics for sampling (see also Remark \ref{r_ek_compared}). 
The deviation bounds show an explicit dependence of the convergence on the temperatures, which is missing in the large deviation analysis in \cite{Dupuis12}.
This justifies why the choice of a second higher temperature in the isa is useful, and shows how it increases the speed of convergence in the ergodic theorem.

\subsection{Application to simulated annealing}
\label{s4}

Here we apply the log-Sobolev inequality in Theorem \ref{thm:LSI} to the isa for simulated annealing.  

The goal of simulated annealing is to find the global minimum of a function~$H: \mathbb{R}^n \to \mathbb{R}$ that is potentially non-convex. Let us explain the main idea of the stochastic version of simulated annealing. One considers a stochastic process on~$H$ subject to thermal noise. 
When simulating this process, one lowers the temperature slowly over time. Hereby, the stochastic process gets trapped. Now, the goal is to show that the trapped process converges to the global minimum of~$H$ with high probability. This is typically true if the temperature is lowered slowly enough. Hence, another goal is to find the best stochastic process with the fastest possible cooling schedule that still allows to approximate the global minimum.

Simulated annealing adapted to the over-damped Langevin dynamics was studied in \cite{GH86, Miclo}, see also \cite{TZ21} for a review and results in discrete time.
As we will see below, the cooling schedule has to be logarithmically slow. 
This implies long waiting time in order to reach the global minimum. There are many approaches to improve this behavior. Luckily, one has the freedom to choose the underlying stochastic process used for simulated annealing. One of the most efficient approach is called Cuckoo search and is based on L\'evy flights (see \cite{Pav07,YaSu09}). Those methods are able to find the global minimum in certain situations with a polynomial cooling schedule. 
An alternative is to use replica exchange or parallel tempering. As we know from~\cite{Dupuis12}, mixing only improves when particles are swapped faster, making the isa a natural candidate for accelerating simulated annealing. 

In \cite{Miclo}, it was shown that for simulated annealing adapted to the over-damped Langevin dynamics, the fastest successful cooling schedule is characterized by the Eyring-Kramers formula for the log-Sobolev constant. 
However, no estimates on the associated log-Sobolev constant at low temperatures were known at that time. 
Hence, more sophisticated arguments were applied by \cite{HKS} to replace the log-Sobolev constant by the Poincar\'e constant showing that the fastest successful cooling schedule is characterized by the critical depth $E_*= H(s_{1p})-H(m_p)$. Only in 2014, the Eyring-Kramers formula for the log-Sobolev constant was derived in \cite{MS14} which leads to a more direct proof of the same result. 
This formula was then used by \cite{MM} to study simulated annealing adapted to the underdamped Langevin dynamics, showing that it is at least as good as simulated annealing adapted to the over-damped Langevin dynamics.
The main result of \cite{HKS, Miclo} is stated as follows.
\begin{theorem}[\cite{HKS, Miclo}]
  Let $(X_t, \, t \ge 0)$ be the process of simulated annealing adapted to the over-damped Langevin dynamics:
\begin{align}\label{e_over-damped_langevin}
  dX_t = - \nabla H(X_t) \, dt + \sqrt{2 \tau(t)} \, dB_t.
\end{align}
Let $E_*:= H(s_{p1})- H(m_p)$ denote the critical depth of the  energy landscape $H$. Then
\begin{enumerate}[itemsep = 3 pt]
\item[(i)] 
If $E \le \liminf_{t \to \infty} \tau(t) \ln t \le \limsup_{t \to \infty} \tau(t) \ln t < \infty$ with $E > E_*$, then for all $\delta >0$,
\begin{align*}
 \mathbb{P}\bigl( H(X_t) \leq H(m_1) + \delta \bigr) \rightarrow 1 \quad \mbox{as } t \to \infty.
\end{align*}
\item[(ii)]
If $\limsup_{t \to \infty} \tau(t) \ln t \le E$ with $0<E < E_* $, 
then for $\delta$ small enough,
\begin{align*}
  \limsup_{t \to \infty} \mathbb{P}\bigl( H(X_t) \leq H(m_1) + \delta \bigr) < 1.
\end{align*}
\end{enumerate}
\end{theorem}

Applying the isa to simulated annealing yields:
\begin{equation}
\label{eq:infswapSA}
\left\{ \begin{array}{rcl}
dX_1 = -\nabla H(X_1) \, dt + \sqrt{2 \, \tau_1(t) \, \rho(X_1,X_2) + 2 \, \tau_2(t) \, \rho(X_2,X_1)} \, d B_1, \\ 
dX_2 = -\nabla H(X_2) \, dt + \sqrt{2 \, \tau_2(t) \, \rho(X_1,X_2) + 2 \, \tau_1(t) \, \rho(X_2,X_1)} \, d B_2.
\end{array}\right.
\end{equation}
We require that for some fixed constant~$K>1$
\begin{equation*}
\tau_2(t) = K \tau_1(t) \qquad \text{and} \qquad \tau_1(t) \downarrow 0 \:.
\end{equation*}

In Theorem \ref{thm:PI} and Theorem \ref{thm:LSI}, we showed that the infinite swapping dynamics mixes faster than the over-damped Langevin dynamics. 
Choosing $\tau_2= K \tau_1$, the effective critical depth of the potential~$H$ is~$\frac{E_*}{K}$ compared to $E_*$ for simulated annealing adapted to the over-damped Langevin dynamics given by \eqref{e_over-damped_langevin}. 
This indicates that the infinite swapping dynamics could outperform the over-damped Langevin dynamics for simulated annealing. 
The main result of this section shows that this is true. 

\begin{theorem}\label{p_infswap_sa}
 Assume that the energy landscape $H$ satisfies Assumptions \ref{assumeenvLSI} and \ref{assumption}. 
 Let $E_*:= H(s_{p1})- H(m_p)$ be the critical depth of the energy landscape $H$. For $E > \frac{E_*}{K}, K > 1$, let
 \begin{equation}
 \label{eq:tau12t}
 \tau_1(t) = \frac{E}{ \ln (2+t)} \quad \mbox{and} \quad \tau_2(t) = \frac{K E}{\ln (2+t)}.
 \end{equation}
Let $X_1,X_2$ be given by~\eqref{eq:infswapSA} with initial distribution $m$. Let $m_t(x_1,x_2)$ be the probability density of $(X_1(t), X_2(t))$. Assume the following moment condition for the initial distribution $m$: for every $p\geq 1$, there exists a constant $C_p$ such that
\begin{equation}
\label{eq:momentest}
\int \bigl(H(x_1) + H(x_2)\bigr)^p dm(x_1,x_2) \leq C_p.
\end{equation} 
Then for all $\delta > 0$, $\varepsilon > 0$
	\begin{align}
	\label{eq:SAconcentration}
	\mathbb{P}(\min\{H(X_1(t)), H(X_2(t))\} > \delta) \lesssim \left(\frac{1}{1+t}\right)^{\min\left(  \frac{\delta}{E}, \frac{1}{2} - \frac{E_*}{2KE} \right) - \varepsilon}.
	\end{align}
\end{theorem}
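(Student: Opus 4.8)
The plan is to run the classical free-energy argument for simulated annealing (in the spirit of \cite{Miclo,HKS,MM}), the essential new input being the \emph{explicit} log-Sobolev constant of Theorem~\ref{thm:LSI}. Denote by $\mu_t$ the measure \eqref{eq:invmeasure} built from the frozen temperatures $\tau_1(t),\tau_2(t)$, let $m_t$ be the law of $(X_1(t),X_2(t))$ under \eqref{eq:infswapSA} and $h_t:=m_t/\mu_t$, and set $\mathcal H(t):=\ent_{\mu_t}(h_t)=\int m_t\ln(m_t/\mu_t)\,dx$. First I would differentiate $\mathcal H$ in time. The Fokker--Planck evolution of $m_t$ gives the usual entropy dissipation $-2\,\mathcal I_{\mu_t}(h_t)$ (with $\mathcal I$ as in \eqref{eq:Fisher}), while the time-dependence of $\mu_t$ adds a term $-\int m_t\,\partial_t\ln\mu_t\,dx$. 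Since $\partial_t\ln\mu_t(x)$ is (up to centering under $\mu_t$) a convex combination of $\sum_i H(x_i)\,\dot\tau_j(t)/\tau_j(t)^2$ and $\dot\tau_j(t)/\tau_j(t)^2=O\big((2+t)^{-1}\big)$, this added term is bounded by $\tfrac{C}{2+t}\big(\E_{m_t}[H(X_1)+H(X_2)]+\E_{\mu_t}[H(X_1)+H(X_2)]\big)$.

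Second, I would estimate the two pieces. For the dissipation, Theorem~\ref{thm:LSI} gives $\mathcal I_{\mu_t}(h_t)\ge\alpha(t)\,\mathcal H(t)$, and here the choice $\tau_2(t)=K\tau_1(t)$ with $K$ \emph{fixed} is crucial: the otherwise problematic factor $\Phi_n(\tau_2/\tau_1)$ becomes the constant $\Phi_n(K)$, and with $\tau_2(t)=KE/\ln(2+t)$ one has $\exp\!\big(E_*/\tau_2(t)\big)=(2+t)^{E_*/(KE)}$, so that $\alpha(t)^{-1}\lesssim(\ln(2+t))^2\,(2+t)^{E_*/(KE)}$. For the moment term, I would propagate moments along \eqref{eq:infswapSA}: using $H^p$ as a Lyapunov function together with the growth conditions of Assumption~\ref{assumeenvLSI}, the hypothesis \eqref{eq:momentest} propagates so that $\E_{m_t}[H(X_1)+H(X_2)]$ stays bounded (the $\mu_t$-integral is even $O(1/\ln(2+t))$), alternatively $\E_{m_t}[H(X_1)+H(X_2)]\le\lambda^{-1}(\mathcal H(t)+C)$ by the Donsker--Varadhan variational formula. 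Either way one obtains
\[
	\dot{\mathcal H}(t)\ \le\ -2\,\alpha(t)\,\mathcal H(t)+\frac{C}{2+t}\bigl(\mathcal H(t)+1\bigr).
\]

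Third comes the point where $E>E_*/K$ is used: it forces $E_*/(KE)<1$, hence $\alpha(t)\gg(2+t)^{-1}$, so for $t$ large the term $\tfrac{C}{2+t}\mathcal H(t)$ is absorbed and $\dot{\mathcal H}(t)\le-\alpha(t)\mathcal H(t)+\tfrac{C}{2+t}$. A Gr\"onwall/ODE-comparison estimate then gives, for every $\varepsilon>0$,
\[
	\mathcal H(t)\ \lesssim\ \frac{1}{(2+t)\,\alpha(t)}\ \lesssim\ (2+t)^{-\left(1-\frac{E_*}{KE}\right)+\varepsilon}.
\]
In parallel, by Laplace asymptotics for the Gibbs measures $\nu^{\tau_i(t)}$ (using $H(m_1)=0$ and Assumption~\ref{assumption}), the set $A:=\{\min(H(X_1),H(X_2))>\delta\}$ satisfies $\mu_t(A)\le\nu^{\tau_1(t)}(\{H>\delta\})\lesssim(2+t)^{-\delta/E+\varepsilon}$.

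Finally I would combine the two by Pinsker's inequality, $\mathbb P(A)=m_t(A)\le\mu_t(A)+\|m_t-\mu_t\|_{\mathrm{TV}}\le\mu_t(A)+\sqrt{\mathcal H(t)/2}$, which yields
\[
	\mathbb P(A)\ \lesssim\ (2+t)^{-\delta/E+\varepsilon}+(2+t)^{-\frac12\left(1-\frac{E_*}{KE}\right)+\varepsilon}
	=(2+t)^{-\min\left(\frac{\delta}{E},\,\frac12-\frac{E_*}{2KE}\right)+\varepsilon},
\]
which is \eqref{eq:SAconcentration}; note that the factor $\tfrac12$ in the exponent is exactly the square root in Pinsker. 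The main obstacle is the middle part: propagating the polynomial moment bound \eqref{eq:momentest} along the time-inhomogeneous SDE \eqref{eq:infswapSA} with vanishing diffusivity, and showing that the resulting temperature-variation error is genuinely of lower order than the (vanishing) log-Sobolev gap $\alpha(t)$ — together with the bookkeeping needed to absorb every poly-logarithmic correction (from the LSI prefactor $N^2(H(m_p)/\tau_1+H(m_p)/\tau_2)$ in \eqref{eq:LSIest}, from the Laplace asymptotics, and from the propagated moments) into the arbitrary $\varepsilon$.
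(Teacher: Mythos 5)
Your proposal is correct and follows essentially the same route as the paper: Pinsker's inequality to split $\mathbb P(\min H>\delta)$ into a $\mu_t$-concentration piece plus $\sqrt{2\,\ent(m_t\mid\mu_t)}$, a differential inequality for $\ent(m_t\mid\mu_t)$ combining the entropy dissipation with the temperature-drift term, Theorem~\ref{thm:LSI} to lower-bound the dissipation, a propagated polynomial moment estimate on $\E[H(X_1)+H(X_2)]$ to control the drift term, and a Gr\"onwall comparison. The only cosmetic differences are that the paper bounds the moments by $C(1+t)^\varepsilon$ (Lemma~\ref{lem:momentest}, adapted from Miclo) rather than by a Donsker--Varadhan-style $\lambda^{-1}(\mathcal H(t)+C)$, and drops the $\tfrac{C}{2+t}\mathcal H(t)$ term rather than absorbing it — both lead to the same exponent after sweeping poly-logs into~$\varepsilon$.
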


\section{Proofs}\label{s_proofs}
\subsection{Proof of Theorem~\ref{thm:PI} and Theorem \ref{thm:LSI}} 

As in \cite{MS14}, we decompose the state space $\mathbb{R}^n$ into an ``admissible partition'' of metastable regions $\{\Omega_{i}\}_{i=1}^N$, as defined below.
\begin{definition}[Admissible partition]\label{d_admissable_single}
	The family $\{\Omega_{i}\}_{i=1}^N$ with $\Omega_{i}$ open and connected is called an admissible partition for $H$ if 
	\begin{enumerate}[label=(\roman*), itemsep = 3 pt]
		\item
		for each $i \in [N]$, the local minimum $m_{i} \in \Omega_{i}$,
		\item
		$\{\Omega_{i}\}_{i=1}^N$ forms a partition of $\mathbb{R}^n$ up to sets of Lebesgue measure zero,
		\item
		The partition sum of $\Omega_{i}$ is approximately Gaussian. That is, there exists $\tau_0 > 0$ such that for all $\tau < \tau_0$, for $i \in [N]$,
		\begin{align}
		\nu^\tau (\Omega_i) Z^\tau &= \int_{\Omega_i}  \exp\biggl(-\frac{H(x)}{\tau}\biggr) dx \nonumber\\
		&= \frac{(2\pi \tau)^{n/2}}{\sqrt{\det \nabla^2 H (m_i)}} \exp\biggl(-\frac{H(m_i)}{\tau}\biggr)(1+O(\sqrt{\tau}|\log \tau|^{3/2})). \label{e_spGM_admissible}
		\end{align}
	\end{enumerate}
\end{definition}
\begin{remark} 
	{\em A canonical way to obtain an admissible partition for $H$ is to associate to each local minimum $m_{i}$ for $i\in [N]$
		its basin of attraction with respect to the gradient flow of~$H$. That is,
		\begin{equation}
		\Omega_{i} =  \left\{y \in \mathbb{R}^N: \lim_{t \rightarrow \infty} y_t = m_i, \, \frac{dy_t}{dt} = -\nabla H(y_t), \, y_0 = y \right\}.
		\end{equation}
		However, as in \cite{MS14}, to facilitate the proof, we choose instead the basins of attraction for the gradient flow of a suitable perturbation of $H$
		(see Section \ref{s_prooflocal}).
	}
\end{remark}

Suppose $\{\Omega_i\}_{i=1}^N$ is an admissible partition in the sense of Definition \ref{d_admissable_single}. Define local measures on $\mathbb{R}^n$
\begin{align}
\nu_i^\tau(x) &:= \frac{1}{Z_i^\tau} \nu^\tau(x)|_{\Omega_i}, \label{e_spGm_local} \\
Z_i^\tau &:= \nu^\tau (\Omega_i) \approx \frac{\sqrt{\det \nabla^2 H (m_1)}}{\sqrt{\det \nabla^2 H (m_i)}} \exp{\left(-\frac{H(m_i)}{\tau}\right)} (1+O(\sqrt{\tau}|\ln \tau|^{3/2})).
\end{align}
This induces a decomposition of the measure $\mu$ on $\mathbb{R}^n \times \mathbb{R}^n$ as
\begin{align}\label{eq:mixture}
	\mu = \frac{1}{2}(\pi^+ + \pi^-)=  \sum \frac{1}{2} Z_{ij}^+ \pi^+_{ij} +  \sum \frac{1}{2} Z_{ij}^- \pi^- _{ij}
\end{align}
where for $1 \le i,j \le n$, $Z_{ij}^+ := Z_i^{\tau_1} Z_j^{\tau_2}, 
Z_{ij}^- := Z_i^{\tau_2} Z_j^{\tau_1}$ and
\begin{align}\pi^+_{ij}(x_1, x_2) := \frac{1}{Z_{ij}^+} \pi^+ (x_1, x_2) |_{\Omega_i \times \Omega_j} = \nu_i^{\tau_1} (x_1) \nu_j^{\tau_2}(x_2)  \\
 \pi^-_{ij}(x_1, x_2) := \frac{1}{Z_{ij}^-}\pi^-(x_1, x_2) |_{\Omega_i \times \Omega_j} = \nu_i^{\tau_2} (x_1) \nu_j^{\tau_1}(x_2).
\end{align}
The following results are read from \cite[Lemma 2.4 and Corollary 2.8]{MS14}.
\begin{lemma}[Decomposition of variance]\label{decompvar_new}
	For the mixture representation~\eqref{eq:mixture} of the Gibbs measure $\mu$, and a smooth function $f: \mathbb{R}^n \times \mathbb{R}^n \rightarrow \mathbb{R}$, it holds
	\begin{align}
		\label{eq:localvar}
		\var_{\mu}(f) &= \frac{1}{2} \sum Z_{ij}^+ \var_{\pi^+_{ij}}(f) + \frac{1}{2} \sum Z_{ij}^- \var_{\pi^-_{ij}}(f) \\
		\label{eq:meandiff_transport}
		&\,\,\, + \frac{1}{4} \sum Z_{ij}^+ Z_{kl}^+ (\E_{\pi^+_{ij}}(f) - \E_{\pi^+_{kl}}(f))^2 + \frac{1}{4} \sum Z_{ij}^- Z_{kl}^- (\E_{\pi^-_{ij}}(f) - \E_{\pi^-_{kl}}(f))^2 \\
		\label{eq:meandiff_switch}
		&\,\,\, + \frac{1}{4} \sum Z_{ij}^+ Z_{kl}^- (\E_{\pi^+_{ij}}(f) - \E_{\pi^-_{kl}}(f))^2.
	\end{align}
	where the second line is summing over unordered pairs $\{(i,j), (k,l)\}$ and the last line is summing over ordered pairs $((i,j), (k,l))$. 
\end{lemma}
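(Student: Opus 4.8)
The plan is to obtain Lemma~\ref{decompvar_new} directly from the classical law of total variance for finite mixtures, applied to the representation \eqref{eq:mixture}. First I would check that \eqref{eq:mixture} really does present $\mu$ as a convex combination of probability measures: since $\{\Omega_i\}_{i=1}^N$ partitions $\mathbb{R}^n$ up to Lebesgue-null sets and $\nu^\tau$ is a probability measure, $\sum_{i} Z_i^\tau = \sum_i \nu^\tau(\Omega_i) = 1$; hence $\sum_{(i,j)} Z_{ij}^+ = \bigl(\sum_i Z_i^{\tau_1}\bigr)\bigl(\sum_j Z_j^{\tau_2}\bigr) = 1$ and likewise $\sum_{(i,j)} Z_{ij}^- = 1$, so the weights $\tfrac12 Z_{ij}^\sigma$ over $\sigma \in \{+,-\}$ and $(i,j) \in [N]^2$ sum to one, and each $\pi^\sigma_{ij}$ is a probability measure supported on $\Omega_i \times \Omega_j$. (Finiteness of all expectations and variances below is guaranteed for $f$ in the relevant class by the Gaussian-type tails of $\mu$.)

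Next I would invoke — or prove in one line as $\E[\var(f \mid Y)] + \var(\E[f\mid Y])$ for the auxiliary index $Y$ with law $(\lambda_k)$ — the general identity that for a mixture $\mu = \sum_k \lambda_k \mu_k$ of probability measures with weights $\lambda_k \ge 0$, $\sum_k \lambda_k = 1$, one has $\var_\mu(f) = \sum_k \lambda_k \var_{\mu_k}(f) + \sum_k \lambda_k \bigl(\E_{\mu_k}(f) - \E_\mu(f)\bigr)^2$. Under the identification $k = (\sigma,i,j)$ with $\lambda_k = \tfrac12 Z_{ij}^\sigma$, the within-group sum $\sum_k \lambda_k \var_{\mu_k}(f)$ is exactly the line \eqref{eq:localvar}.

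It then remains to rewrite the between-group term. Using $\E_\mu(f) = \sum_l \lambda_l \E_{\mu_l}(f)$ and expanding the square one gets the standard identity
\[
\sum_k \lambda_k \bigl(\E_{\mu_k}(f) - \E_\mu(f)\bigr)^2 = \tfrac12 \sum_{k,l} \lambda_k \lambda_l \bigl(\E_{\mu_k}(f) - \E_{\mu_l}(f)\bigr)^2,
\]
in which the diagonal terms $k=l$ vanish automatically. Substituting $k = (\sigma,i,j)$, $l = (\sigma',k,l)$ and splitting the double sum into the three cases $(\sigma,\sigma')=(+,+)$, $(\sigma,\sigma')=(-,-)$, and $\{\sigma,\sigma'\}=\{+,-\}$ yields precisely lines \eqref{eq:meandiff_transport}–\eqref{eq:meandiff_switch}: in the first two cases an ordered pair $((i,j),(k,l))$ and its reverse contribute equally, so $\tfrac12$ of the ordered sum equals the sum over unordered pairs with $(i,j) \neq (k,l)$, and the weight $\tfrac12 \cdot \tfrac12 \cdot \tfrac12$ combines with the factor $2$ from that reduction to give the stated prefactor $\tfrac14$; in the mixed case the two labelings $(\sigma,\sigma')=(+,-)$ and $(-,+)$ give symmetric contributions, so that half of a symmetric double sum again becomes a full sum over ordered pairs $((i,j),(k,l))$ with prefactor $\tfrac14$ — and here the diagonal $(i,j)=(k,l)$ is retained, since $\pi^+_{ii} \neq \pi^-_{ii}$ in general, consistent with the statement of the lemma.

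The only real work is the combinatorial bookkeeping: keeping the factors of $\tfrac12$ coming from the mixture weights distinct from the factor of $2$ gained when passing from ordered to unordered pairs, and being careful that \eqref{eq:meandiff_transport} is summed over unordered pairs $(i,j) \ne (k,l)$ while \eqref{eq:meandiff_switch} is summed over ordered pairs without restriction. No analytic difficulty is expected, the whole statement being a bookkeeping specialization of the law of total variance.
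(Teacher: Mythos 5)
Your proof is correct and, up to self-containment, coincides with the argument underlying the cited result \cite[Lemma 2.4]{MS14}: the law of total variance applied to the finite mixture $\mu = \sum_k \lambda_k \mu_k$, followed by the standard rewrite $\sum_k \lambda_k (\E_{\mu_k} f - \E_\mu f)^2 = \tfrac12 \sum_{k,l} \lambda_k \lambda_l (\E_{\mu_k} f - \E_{\mu_l} f)^2$. The paper merely cites the reference, so your reconstruction — including the careful bookkeeping showing $\tfrac12 \cdot \tfrac14 \cdot 2 = \tfrac14$ for unordered same-sign pairs and $2 \cdot \tfrac12 \cdot \tfrac14 = \tfrac14$ for ordered mixed-sign pairs with the diagonal retained — is the right proof and matches the intended one.
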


\begin{lemma}[Decomposition of entropy]\label{decompent_new}
	For the mixture representation~\eqref{eq:mixture} of the Gibbs measure $\mu$, and a smooth function $f: \mathbb{R}^n \times \mathbb{R}^n \rightarrow \mathbb{R}$, it holds
	\begin{align}
	\label{eq:localent}
	\ent_{\mu}(f^2) &\leq \frac{1}{2} \sum Z_{ij}^+ \ent_{\pi^+_{ij}}(f^2) + \frac{1}{2} \sum Z_{ij}^- \ent_{\pi^-_{ij}}(f^2) \\
	\label{eq:localvar_ent}
	&\,\,\, + \frac{1}{2} \sum_{(i, j)} \left(\sum_{(k, l) \neq (i, j)} \frac{ Z_{kl}^+}{\Lambda(Z_{ij}^+, Z_{kl}^+)} + \sum_{(k, l)} \frac{ Z_{kl}^-}{\Lambda(Z_{ij}^+, Z_{kl}^-)}\right) Z_{ij}^+\var_{\pi^+_{ij}}(f) \\
	\label{eq:localvar_ent_2}
	&\,\,\, + \frac{1}{2} \sum_{(i, j)} \left(\sum_{(k, l) \neq (i, j)} \frac{ Z_{kl}^-}{\Lambda(Z_{ij}^-, Z_{kl}^-)} + \sum_{(k, l)} \frac{ Z_{kl}^+}{\Lambda(Z_{ij}^-, Z_{kl}^+)}\right) Z_{ij}^-\var_{\pi^-_{ij}}(f) \\
	\label{eq:meandiff_transport_ent}
	&\,\,\, + \frac{1}{2} \sum_{\sigma \in\{-,+\}}\sum \frac{Z_{ij}^\sigma Z_{kl}^\sigma}{\Lambda(Z_{ij}^\sigma, Z_{kl}^\sigma)} (\E_{\pi^\sigma_{ij}}(f) - \E_{\pi^\sigma_{kl}}(f))^2 \\
	\label{eq:meandiff_switch_ent}
	&\,\,\, + \frac{1}{2} \sum \frac{Z_{ij}^+ Z_{kl}^-}{\Lambda(Z_{ij}^+, Z_{kl}^-)} (\E_{\pi^+_{ij}}(f) - \E_{\pi^-_{kl}}(f))^2,
\end{align}
	where the second to last line is summing over unordered pairs $\{(i,j), (k,l)\}$ and the last line is summing over ordered pairs $((i,j), (k,l))$. Here the function $\Lambda:[0,\infty)\times [0,\infty)  \to [0,infty)$ is the logarithmic mean defined by
	\begin{equation}\label{eq:def:logMean}
		\Lambda(a,b) = \int_0^1 a^{(1-s)} b^s \, ds = \begin{cases}
			\frac{a-b}{\log a - \log b} , & a \ne b ;\\
			a , & a= b  .
		\end{cases}
	\end{equation}
\end{lemma}
The local variances appearing in \eqref{eq:localvar}, \eqref{eq:localvar_ent} and \eqref{eq:localvar_ent_2}  and the local entropies appearing in \eqref{eq:localent} are treated by the Poincar\'e and the log-Sobolev inequalities for local product measures. 
\begin{lemma}[Local PI for $\pi^\sigma_{ij}$]\label{p_PI_prod} Under Assumption \ref{assumeenv} and given $\tau_2$ small enough, 
there exists an admissible partition $\{\Omega_i\}_{i=1}^N$ such that for all $\tau \leq \tau_2$, , for all smooth functions $f: \mathbb{R}^n \times \mathbb{R}^n \rightarrow \mathbb{R}$
	\begin{equation}
	\var_{\pi^\sigma_{ij}}(f) \overset{\eqref{e_prod_var}}{\leq}   O(1) \E_{\pi^\sigma_{ij}}(\tau_{\sigma(1)}  |\nabla_{x_1} f|^2 + \tau_{\sigma(2)}  |\nabla_{x_2} f|^2). 
	\end{equation}
\end{lemma}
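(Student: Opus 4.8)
The plan is to exploit that each $\pi^\sigma_{ij}$ is a \emph{product} measure, so that a Poincar\'e inequality for it follows by tensorization from a Poincar\'e inequality for each of its one-dimensional factors $\nu_i^{\tau_{\sigma(1)}}$ and $\nu_j^{\tau_{\sigma(2)}}$; these factors are restrictions of the single-site Gibbs measure $\nu^\tau$ to a single metastable well, for which a local Poincar\'e inequality with a constant \emph{uniform} in the low temperature is already available from \cite{MS14}.

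First I would fix the admissible partition. Following \cite{MS14}, for $\tau_2$ small enough one takes $\{\Omega_i\}_{i=1}^N$ to be the basins of attraction of the gradient flow of a fixed (that is, $\tau$-independent) smooth perturbation $\widetilde H$ of $H$, obtained by making $H$ strictly convex in a small neighbourhood of each index-one saddle while leaving it unchanged away from the saddles. By \cite{MS14} this is an admissible partition in the sense of Definition~\ref{d_admissable_single}, and---this is the one genuinely nontrivial input---\cite[Theorem 2.9]{MS14} gives, for every $i\in[N]$ and every $\tau\le\tau_2$, a local Poincar\'e inequality for the renormalized measure $\nu_i^\tau$ from \eqref{e_spGm_local},
\[
  \var_{\nu_i^\tau}(g)\ \le\ C_0\int_{\Omega_i}\tau\,|\nabla g|^2\,d\nu_i^\tau\qquad\text{for all smooth } g:\mathbb{R}^n\to\mathbb{R},
\]
with a constant $C_0=O(1)$ depending only on $H$ (essentially on $1/\lambda_{\min}(\nabla^2H(m_i))$ and the constants of Assumption~\ref{assumeenv}) and \emph{not} on $\tau$. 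Heuristically this holds because $\Omega_i$ contains exactly one nondegenerate minimum, so $\nu_i^\tau$ is a bounded log-density perturbation---controlled by a Lyapunov/Holley--Stroock argument---of a measure that is strictly log-concave at temperature $\tau$, whose Poincar\'e constant in the above normalization is $O(1)$.

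Next I would tensorize. Since $\pi^\sigma_{ij}(x_1,x_2)=\nu_i^{\tau_{\sigma(1)}}(x_1)\,\nu_j^{\tau_{\sigma(2)}}(x_2)$ is a product measure, the product decomposition of the variance \eqref{e_prod_var} gives
\[
  \var_{\pi^\sigma_{ij}}(f)\ \le\ \E_{\nu_j^{\tau_{\sigma(2)}}}^{x_2}\!\Bigl[\var_{\nu_i^{\tau_{\sigma(1)}}}^{x_1}(f)\Bigr]+\E_{\nu_i^{\tau_{\sigma(1)}}}^{x_1}\!\Bigl[\var_{\nu_j^{\tau_{\sigma(2)}}}^{x_2}(f)\Bigr].
\]
Applying the local Poincar\'e inequality above to the inner variance in the $x_1$-variable (at temperature $\tau_{\sigma(1)}\le\tau_2$) in the first term and in the $x_2$-variable (at temperature $\tau_{\sigma(2)}\le\tau_2$) in the second, and then recombining by Fubini, yields
\[
  \var_{\pi^\sigma_{ij}}(f)\ \le\ C_0\,\E_{\pi^\sigma_{ij}}\!\bigl(\tau_{\sigma(1)}|\nabla_{x_1}f|^2\bigr)+C_0\,\E_{\pi^\sigma_{ij}}\!\bigl(\tau_{\sigma(2)}|\nabla_{x_2}f|^2\bigr)=O(1)\,\E_{\pi^\sigma_{ij}}\!\bigl(\tau_{\sigma(1)}|\nabla_{x_1}f|^2+\tau_{\sigma(2)}|\nabla_{x_2}f|^2\bigr),
\]
which is the claimed inequality.

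The only real content is the uniform-in-$\tau$ local Poincar\'e inequality for a single well, which is imported verbatim from \cite{MS14}; beyond that there is no serious obstacle, only the observation that one admissible partition serves simultaneously for both temperatures $\tau_{\sigma(1)}$ and $\tau_{\sigma(2)}$. This is automatic, because the partition is defined through a $\tau$-independent perturbed gradient flow and the local inequality of \cite{MS14} holds throughout the range $\tau\le\tau_2$.
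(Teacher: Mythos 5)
Your overall architecture---tensorize the variance via \eqref{e_prod_var} over the product structure of $\pi^\sigma_{ij}$ and feed in a local Poincar\'e inequality for each factor $\nu_i^\tau$ with $\rho^{-1}=O(\tau)$ uniform over $\tau\le\tau_2$---is exactly the paper's. The gap is in the step you call ``automatic'': you assert that \cite[Theorem 2.9]{MS14} already delivers such a local PI on a $\tau$-independent admissible partition, so that the same $\{\Omega_i\}$ can serve simultaneously at both temperatures. That is not what \cite{MS14} gives. There, the admissible partition is built from the basins of attraction of a \emph{temperature-dependent} perturbation $H_\tau$ with $\|H-H_\tau\|_\infty=O(\tau)$, because the Lyapunov function used is $W_\tau=\exp(H_\tau/2\tau)$ and the Neumann boundary condition \eqref{e_ibp} forces $\Omega_i$ to be a basin for the gradient flow of the same $H_\tau$. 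So taking $\tau_1\ne\tau_2$ naively would produce two different partitions, and the tensorization step would not go through as written.

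The paper resolves this with a small but genuine modification that your proposal skips: fix the perturbation at the higher temperature, $H_{\tau_2}=H+V_{\tau_2}$ (Lemma~\ref{p_epsilon}), define $\Omega_i$ once and for all as the basins for $-\nabla H_{\tau_2}$, and then for every $\tau\le\tau_2$ use the Lyapunov function $W_\tau=\exp(H_{\tau_2}/2\tau)$---i.e.\ perturb $H$ only inside the Lyapunov function, \emph{not} inside the Gibbs measure $\nu^\tau_i$. One then checks $\frac{L_\tau W_\tau}{W_\tau}=\tfrac12\Delta H_{\tau_2}-\tfrac1{4\tau}\bigl(|\nabla H|^2-|\nabla V_{\tau_2}|^2\bigr)\le\frac{L_{\tau_2}W_{\tau_2}}{W_{\tau_2}}$ provided $|\nabla V_{\tau_2}|\le|\nabla H|$, so verifying the Lyapunov condition at $\tau=\tau_2$ suffices for all $\tau\le\tau_2$ on the same partition. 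Without this observation, your claim that the uniform local PI on a single partition is ``imported verbatim from \cite{MS14}'' is not correct, and is precisely the new technical content of Lemmas~\ref{p_PI_spGm}--\ref{p_LSI_spGm}.
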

\begin{lemma}[Local LSI for $\pi^\sigma_{ij}$]\label{p_LSI_prod} Under Assumption \ref{assumeenvLSI}, for all smooth functions $f: \mathbb{R}^n \times \mathbb{R}^n \rightarrow \mathbb{R}$
	\begin{equation}
	\ent_{\pi^\sigma_{ij}}(f^2) \overset{\eqref{e_prod_ent}}{\leq}   O(1) \E_{\pi^\sigma_{ij}}( |\nabla_{x_1} f|^2 + |\nabla_{x_2} f|^2).
	\end{equation}
\end{lemma}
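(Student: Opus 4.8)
The plan is to exploit that each $\pi^\sigma_{ij}$ is a \emph{product} of single-well local Gibbs measures and to tensorize a temperature-uniform local log-Sobolev inequality for the two factors. By the definitions following \eqref{eq:mixture} one has $\pi^\sigma_{ij} = \nu_i^{\tau_{\sigma(1)}} \otimes \nu_j^{\tau_{\sigma(2)}}$, where $\nu_k^\tau$ is the local measure of \eqref{e_spGm_local} (so $\pi^+_{ij} = \nu_i^{\tau_1}\otimes\nu_j^{\tau_2}$ and $\pi^-_{ij} = \nu_i^{\tau_2}\otimes\nu_j^{\tau_1}$). First I would invoke the sub-additivity of entropy over product spaces,
\[
  \ent_{\mu_1\otimes\mu_2}(F) \le \int \ent_{\mu_1}\bigl(F(\cdot,y)\bigr)\, d\mu_2(y) + \int \ent_{\mu_2}\bigl(F(x,\cdot)\bigr)\, d\mu_1(x),
\]
which shows that a log-Sobolev inequality tensorizes with no loss in the constant, so it suffices to have a local LSI for each marginal $\nu_k^\tau$, uniformly in $\tau$.

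That ingredient is available on the present hypotheses. Under Assumption \ref{assumeenvLSI}, \cite[Theorem 2.10]{MS14} (see also Remark \ref{assumeremdiscussion}) provides a threshold $\tau_0>0$ and a constant $C_0<\infty$, depending only on $n$ and the constants in Assumption \ref{assumeenvLSI} — and on the admissible partition, which I would take to be the same one as in Lemma \ref{p_PI_prod} — such that $\ent_{\nu_k^\tau}(g^2) \le C_0 \int \tau\,|\nabla g|^2\, d\nu_k^\tau$ for all $k\in[N]$, all $\tau\le\tau_0$, and all smooth $g$. This is the log-Sobolev analogue of the local Poincar\'e inequality behind Lemma \ref{p_PI_prod}, and it is uniform in $\tau$ because $\Omega_k$ carries only the single local minimum $m_k$. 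Applying it in each variable after the tensorization step (with $F=f^2$, $\mu_1=\nu_i^{\tau_{\sigma(1)}}$, $\mu_2=\nu_j^{\tau_{\sigma(2)}}$) gives $\ent_{\pi^\sigma_{ij}}(f^2) \le C_0\,\E_{\pi^\sigma_{ij}}\bigl(\tau_{\sigma(1)}|\nabla_{x_1}f|^2 + \tau_{\sigma(2)}|\nabla_{x_2}f|^2\bigr)$, which already has the shape of Lemma \ref{p_PI_prod}; since $\tau_{\sigma(1)},\tau_{\sigma(2)}\le\tau_2\le\tau_0$, the temperature prefactors are bounded and can be absorbed into the $O(1)$, which yields the claimed bound. (One could retain the $\tau$-weights, but this is not needed: the $\tau_1^{-1}$-type factors in Theorem \ref{thm:LSI} originate from the $\Lambda$-weighted local-variance terms and the mean-difference terms in Lemma \ref{decompent_new}, not from the local entropies.)

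The only non-routine input is the temperature-uniform local LSI for $\nu_k^\tau$, and that is where I expect the real work to be. Establishing it (as in \cite{MS14}) amounts to combining a Bakry--\'Emery curvature bound on a neighborhood of $m_k$, where $\nabla^2 H > 0$ by the Morse condition, with the growth and curvature-at-infinity conditions of Assumption \ref{assumeenvLSI}, glued by a perturbation argument — which is exactly the reason the admissible partition is taken as the basins of a \emph{perturbed} gradient flow (Section \ref{s_prooflocal}) rather than of $H$ itself. On our side, all that remains to check is internal consistency, namely that the same admissible partition and the same threshold $\tau_0$ are used here as in Lemma \ref{p_PI_prod}; tensorization and the absorption of the bounded temperature factors are then immediate.
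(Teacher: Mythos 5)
Your overall approach is exactly the paper's: reduce to the one-variable local measures $\nu_k^\tau$ via tensorization of entropy (the inequality \eqref{e_prod_ent}), apply a local LSI for $\nu_k^\tau$ in each variable, and make sure the admissible partition and the threshold $\tau_0$ are the same $\tau$-independent ones used for Lemma~\ref{p_PI_prod}. All of that matches the paper's proof and is correct.

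There is, however, a factual overclaim in the intermediate step. You assert a $\tau$-weighted local LSI, $\ent_{\nu_k^\tau}(g^2)\le C_0\,\tau\int|\nabla g|^2\,d\nu_k^\tau$, i.e.\ $\alpha^{-1}=O(\tau)$, ``the log-Sobolev analogue'' of $\rho^{-1}=O(\tau)$. What the paper actually establishes (Lemma~\ref{p_LSI_spGm}, via the Lyapunov criterion quoted from \cite[Theorem~3.15]{MS14}) is only $\alpha^{-1}=O(1)$: in the bound
\[
\alpha^{-1} \leq 2\sqrt{\tfrac{\tau}{\lambda}\Bigl(\tfrac12 + \tfrac{b+\lambda\nu_i^\tau(|x|^2)}{\rho\tau}\Bigr)} + \tfrac{K_H}{\lambda}\Bigl(\tfrac12 + \tfrac{b+\lambda\nu_i^\tau(|x|^2)}{\rho\tau}\Bigr) + \tfrac{2}{\rho},
\]
the middle term involving $K_H$ does not vanish as $\tau\to 0$, so the local LSI constant for $\nu_k^\tau$ does \emph{not} scale like $\tau$, unlike the Poincar\'e constant. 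Your final conclusion is unaffected, since you immediately absorb the (bounded) $\tau$-factors into the $O(1)$, and $O(\tau)\subset O(1)$. But the parenthetical remark that the $\tau_1^{-1}$-type factors in Theorem~\ref{thm:LSI} ``do not originate from the local entropies'' is then wrong: because the local LSI is only $\alpha^{-1}=O(1)$ (unweighted), passing from $\E_{\pi^\sigma_{ij}}(|\nabla_{x_1}f|^2+|\nabla_{x_2}f|^2)$ to the temperature-weighted Dirichlet form $\mathcal{E}_{\pi^\sigma}(f)$ costs a factor $\tau_1^{-1}$, which is precisely where the $O(\tau_1^{-1})$ in \eqref{eq:localent_estimate} comes from. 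In short: the proof of the lemma itself goes through and is the same as the paper's, but replace your claimed $\alpha^{-1}=O(\tau)$ with the correct $\alpha^{-1}=O(1)$, and drop the parenthetical remark.
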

We defer the details of the proof of Lemmas \ref{p_PI_prod} and \ref{p_LSI_prod} to Section \ref{s_prooflocal}. They are based on the simple product structure of the measures $\pi^\sigma_{ij}$ and an adaption of the local Poincar\'e inequality \cite[Theorem 2.9]{MS14} and the local LSI inequality \cite[Theorem 2.10]{MS14}.
In the sequel, for a Dirichlet form $\mathcal{E}(f)$, we denote $\mathcal{E}(f)[\Omega]$ to be the Dirichlet integral with region of integration restricted to $\Omega$. \medskip
It follows that
\begin{align}\label{eq:localvar_estimate}
Z_{ij}^\sigma \var_{\pi^\sigma_{ij}}(f) &\leq  O(1) \mathcal{E}_{\pi^\sigma}(f)[\Omega_i \times \Omega_j], \\
\label{eq:localent_estimate}
Z_{ij}^\sigma \ent_{\pi^\sigma_{ij}}(f) &\leq  O(\tau_1^{-1}) \mathcal{E}_{\pi^\sigma}(f)[\Omega_i \times \Omega_j].
\end{align}

To deal with the mean-differences appearing in \eqref{eq:meandiff_transport} and \eqref{eq:meandiff_transport_ent}, we will apply the mean-difference estimate from \cite[Theorem 2.12]{MS14}, which allows us to transport in one of the variables $x_1, x_2$ at a time from one metastable region $\Omega_j$ to another metastable region $\Omega_k$. 
In order to ensure that we only get exponential dependence on $1/\tau_2$ rather than $1/\tau_1$ in the Eyring-Kramers formulas, we only transport in the high-temperature variable, and not in the low-temperature variable. This allows us to deal with mean-differences of the type between $\pi^+_{ij}$ and $\pi^+_{ik}$, or the type between $\pi^-_{ji}$ and $\pi^-_{ki}$.

\begin{lemma}[Mean-difference estimates for $\pi^+_{ij}, \pi^+_{ik}$ and for $\pi^-_{ji}, \pi^-_{ki}$] Recall the notation $\approx, \lessapprox$ defined in \eqref{eq:approx_notation}. We have
	\begin{align}\label{eq:meandiff_transport_plus}
	Z_{ik}^+ (\E_{\pi^+_{ij}} f - \E_{\pi^+_{ik}} f)^2 
	&\lessapprox C_{jk}^{\tau_2}\cdot  \mathcal{E}_{\pi^+}(f)[\Omega_i \times \mathbb{R}^n], \\
	\label{eq:meandiff_transport_minus}
	Z_{ki}^- (\E_{\pi^-_{ji}} f - \E_{\pi^-_{ki}} f)^2 &\lessapprox C_{jk}^{\tau_2}\cdot \mathcal{E}_{\pi^-}(f)[ \mathbb{R}^n\times \Omega_i].
	\end{align}
	where 
	\[
	C_{jk}^{\tau_2} := \frac{2\pi \sqrt{\det \nabla^2 H(s_{jk})}}{\sqrt{\det \nabla^2 H(m_k)}|\lambda^-(s_{jk})|} \exp{\left(\frac{H(s_{jk}) - H(m_k)}{\tau_2}\right)}.
	\]
\end{lemma}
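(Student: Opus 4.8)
The plan is to reduce the two-variable mean-difference estimates to the one-variable mean-difference estimate of \cite[Theorem 2.12]{MS14} by integrating out the ``passive'' variable. Consider \eqref{eq:meandiff_transport_plus}. Since $\pi^+_{ij}(x_1,x_2) = \nu_i^{\tau_1}(x_1)\,\nu_j^{\tau_2}(x_2)$ and $\pi^+_{ik}(x_1,x_2) = \nu_i^{\tau_1}(x_1)\,\nu_k^{\tau_2}(x_2)$ share the \emph{same} low-temperature marginal $\nu_i^{\tau_1}$ in $x_1$ and differ only in the high-temperature marginal in $x_2$ (moving its mass from $\Omega_j$ to $\Omega_k$), I would write, for fixed $x_1$, $g_{x_1}(x_2) := f(x_1,x_2)$ and note
\begin{align*}
\E_{\pi^+_{ij}}(f) - \E_{\pi^+_{ik}}(f) = \int \left( \E_{\nu_j^{\tau_2}} g_{x_1} - \E_{\nu_k^{\tau_2}} g_{x_1} \right) d\nu_i^{\tau_1}(x_1).
\end{align*}
By Jensen's inequality (Cauchy--Schwarz on the probability space $\nu_i^{\tau_1}$), the square of the left side is bounded by $\int \left( \E_{\nu_j^{\tau_2}} g_{x_1} - \E_{\nu_k^{\tau_2}} g_{x_1} \right)^2 d\nu_i^{\tau_1}(x_1)$.

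Next, for each fixed $x_1$, I would apply the one-variable mean-difference estimate of \cite[Theorem 2.12]{MS14} to the function $g_{x_1}$ with respect to the Gibbs measure $\nu^{\tau_2}$ at the \emph{higher} temperature $\tau_2$, transporting between the metastable regions $\Omega_j$ and $\Omega_k$. This is precisely the point where the argument uses that we only transport in the high-temperature variable: the exponential factor produced by that lemma carries $1/\tau_2$ in the exponent, namely a constant comparable to $C_{kj}^{\tau_2}$ multiplied (after summing the factors $Z^{\tau_2}_j, Z^{\tau_2}_k$ coming from normalization, matching the bookkeeping of \cite{MS14}) so that
\begin{align*}
Z_{ik}^+ \left( \E_{\nu_j^{\tau_2}} g_{x_1} - \E_{\nu_k^{\tau_2}} g_{x_1} \right)^2 \lessapprox C_{kj}^{\tau_2} \int \tau_2 |\nabla_{x_2} g_{x_1}|^2 \, d\nu^{\tau_2}(x_2)
\end{align*}
up to the admissible-partition corrections $(1+O(\sqrt{\tau_2}|\ln\tau_2|^{3/2}))$. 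Integrating this inequality against $d\nu_i^{\tau_1}(x_1)$ and recognizing that $\int \int \tau_2 |\nabla_{x_2} f|^2 \, d\nu^{\tau_2}(x_2)\, d\nu_i^{\tau_1}(x_1)$ is exactly (a constant multiple of) $\mathcal{E}_{\pi^+}(f)[\Omega_i \times \mathbb{R}^n]$ — since only the $|\nabla_{x_2} f|^2$ part of the Dirichlet form with coefficient $\tau_2$ appears — yields \eqref{eq:meandiff_transport_plus}. The estimate \eqref{eq:meandiff_transport_minus} follows by the symmetry $x_1 \leftrightarrow x_2$: for $\pi^-_{ji}$ and $\pi^-_{ki}$ the shared low-temperature marginal is now $\nu_i^{\tau_1}$ in the \emph{second} variable, and one transports in the first variable at temperature $\tau_2$, restricting the Dirichlet integral to $\mathbb{R}^n \times \Omega_i$.

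The main obstacle I anticipate is not conceptual but bookkeeping: one must check carefully that the normalization constants $Z_{ik}^+ = Z_i^{\tau_1} Z_k^{\tau_2}$ combine correctly with the $\tau_2$-temperature one-variable estimate so that the $Z_i^{\tau_1}$ factor is absorbed by the $d\nu_i^{\tau_1}$ integration and only $Z_k^{\tau_2}$ (together with the $Z_j^{\tau_2}$ hidden inside the one-variable transport estimate) contributes to $C_{kj}^{\tau_2}$, which by Assumption \ref{assumeenv} and the admissible-partition property \eqref{e_spGM_admissible} has exactly the claimed Gaussian-weighted Arrhenius form. A secondary point requiring care is that the one-variable estimate of \cite[Theorem 2.12]{MS14} is applied uniformly in the parameter $x_1$; since the relevant constant depends only on $H$ restricted to the $x_2$-copy of $\mathbb{R}^n$ and on the fixed admissible partition, this uniformity is automatic, but it should be stated explicitly. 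The symbol $\lessapprox$ is used exactly as in \cite{MS14} to absorb these lower-order multiplicative corrections.
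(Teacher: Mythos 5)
Your proposal follows essentially the same route as the paper: Cauchy--Schwarz (Jensen) in the passive low-temperature variable $x_1$, then the one-variable mean-difference estimate of \cite[Theorem 2.12]{MS14} at temperature $\tau_2$ applied to $g_{x_1}(x_2)=f(x_1,x_2)$, then integrating the result against $Z_i^{\tau_1}\,d\nu_i^{\tau_1}(x_1)$ so that the low-temperature normalization is absorbed into the restriction of the Dirichlet form to $\Omega_i\times\mathbb{R}^n$. The only slip is cosmetic: the intermediate display carries the full weight $Z_{ik}^+=Z_i^{\tau_1}Z_k^{\tau_2}$ on the left before the $d\nu_i^{\tau_1}$-integration has been performed, so strictly one should write $Z_k^{\tau_2}$ there and let $Z_i^{\tau_1}$ pair with the outer integral (exactly the bookkeeping you flag yourself); with that fixed, the argument matches the paper's line for line, and the second inequality by $x_1\leftrightarrow x_2$ symmetry is also how the paper handles it.
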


\begin{proof} For the first estimate, applying Cauchy-Schwarz and \cite[Theorem 2.12]{MS14}, we get
\begin{align}
	Z_{ik}^+ (\E_{\pi^+_{ij}} f - \E_{\pi^+_{ik}} f)^2 &\leq Z_i^{\tau_1} Z_k^{\tau_2} \E_{\nu^{\tau_1}_{i}} (\E_{\nu^{\tau_2}_{j}} f - \E_{\nu^{\tau_2}_{k}} f)^2 \\
	&\lessapprox Z_i^{\tau_1}\E_{\nu^{\tau_1}_{i}} C_{jk}^{\tau_2} \int \tau_2 |\nabla_{x_2} f|^2 d\nu^{\tau_2}(x_2)\\
	&\leq C_{jk}^{\tau_2}\cdot  \mathcal{E}_{\pi^+}(f)[\Omega_i \times \mathbb{R}^n].
\end{align}
The second estimate is completely analogous. 
\end{proof}

To deal with the mean-differences in \eqref{eq:meandiff_switch} and \eqref{eq:meandiff_switch_ent}, we have another move available, which is to swap the temperatures of the two variables, i.e. to swap between $\pi^+_{ij}$ and $\pi^-_{ij}$. This is the main new technical ingredient compared to \cite{MS14}, which comes at a cost of a term involving the ratio of the higher temperature to the lower temperature, $\tau_2/\tau_1$. 

\begin{lemma}[Mean-difference estimate for $\pi^+_{ij}, \pi^-_{ij}$]\label{p_meandiff_prod}
	\begin{align}
	(\E_{\pi^+_{ij}} f - \E_{\pi^-_{ij}} f)^2 &\leq  \Phi_n\Bigl(\frac{\tau_2}{\tau_1}\Bigr)  O(\tau_2) (\E_{\pi^+_{ij}} |\nabla_{x_2} f|^2 + \E_{\pi^-_{ij}} |\nabla_{x_1} f|^2 ) \\
	&\quad + \omega(\tau_2) \sum_{\sigma\in\{+,-\}} \E_{\pi^\sigma_{ij}} (\tau_{\sigma(1)}  |\nabla_{x_1} f|^2 + \tau_{\sigma(2)} |\nabla_{x_2} f|^2). 
	\end{align}
	for any smooth function $f: \mathbb{R}^n \times \mathbb{R}^n \rightarrow \mathbb{R}$, where $\Phi_n$ is the function defined in equation \eqref{eq:temperature_dependence} and $\omega(\tau_2) := O(\sqrt{\tau_2} |\log \tau_2|^{3/2})$.
\end{lemma}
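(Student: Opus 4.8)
The plan is to compare the two product measures $\pi^+_{ij}$ and $\pi^-_{ij}$ by interpolating along a path that swaps the temperatures of the two coordinates, and to estimate the cost of this interpolation by a weighted transport/Lyapunov argument. Since both measures are supported on $\Omega_i \times \Omega_j$ and factor as products, write $\pi^+_{ij} = \nu_i^{\tau_1}(x_1)\,\nu_j^{\tau_2}(x_2)$ and $\pi^-_{ij} = \nu_i^{\tau_2}(x_1)\,\nu_j^{\tau_1}(x_2)$. First I would reduce the mean-difference $(\E_{\pi^+_{ij}}f - \E_{\pi^-_{ij}}f)^2$ to a one-parameter family by introducing the interpolating densities $\pi^s_{ij}$, $s\in[0,1]$, for instance with coordinate-$1$ marginal $\propto \exp(-H(x_1)/\tau_s^{(1)})$ and coordinate-$2$ marginal $\propto \exp(-H(x_2)/\tau_s^{(2)})$ where $\tau_s^{(1)}$ runs from $\tau_1$ to $\tau_2$ and $\tau_s^{(2)}$ from $\tau_2$ to $\tau_1$ (e.g. $1/\tau_s^{(1)} = (1-s)/\tau_1 + s/\tau_2$). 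Then $\E_{\pi^+_{ij}}f - \E_{\pi^-_{ij}}f = \int_0^1 \frac{d}{ds}\E_{\pi^s_{ij}}f\, ds$, and $\frac{d}{ds}\E_{\pi^s_{ij}}f = \E_{\pi^s_{ij}}\!\big(f\cdot \partial_s \log \pi^s_{ij}\big)$, where $\partial_s \log \pi^s_{ij}$ is an explicit affine-in-$H$ expression in each variable separately (the $s$-derivatives of the inverse temperatures times $H(x_1)$, resp. $H(x_2)$, recentered). Applying Cauchy--Schwarz in $s$ and then the local Poincaré inequality (Lemma \ref{p_PI_prod}) on $\pi^s_{ij}$ bounds this by $\int_0^1 \big(\text{local Poincaré constant of }\pi^s_{ij}\big)\cdot \E_{\pi^s_{ij}}(\text{gradient terms})\cdot \var_{\pi^s_{ij}}(\partial_s\log\pi^s_{ij})\,ds$, up to constants.

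The crux is then to control two quantities uniformly in $s$: the Dirichlet-type term $\E_{\pi^s_{ij}}(|\nabla f|^2$-pieces$)$ must be transferred back to $\pi^+_{ij}$ and $\pi^-_{ij}$ (this is where the $\omega(\tau_2)$ and the $\sum_\sigma \E_{\pi^\sigma_{ij}}(\tau_{\sigma(1)}|\nabla_{x_1}f|^2 + \tau_{\sigma(2)}|\nabla_{x_2}f|^2)$ error term originate, via a density-ratio comparison between $\pi^s_{ij}$ and the endpoints that is controlled by the Gaussian-approximation estimate \eqref{e_spGM_admissible}), while the variance of $\partial_s\log\pi^s_{ij}$ under $\pi^s_{ij}$ is essentially $\var$ of a linear function of $H(x_1)$ plus a linear function of $H(x_2)$, which by the Gaussian concentration near the minima is of order $(\tau_s^{(1)})^2 + (\tau_s^{(2)})^2 \lesssim \tau_2^2$ times a dimension-dependent factor. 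The dimension dependence, and hence the function $\Phi_n(\tau_2/\tau_1)$, comes in precisely here: when transferring the Dirichlet form between a measure at inverse temperature $1/\tau_s$ and one at $1/\tau_1$ or $1/\tau_2$, the density ratio $\exp(-cH(x)(1/\tau_s - 1/\tau_1))$ integrated against a Gaussian of width $\sqrt{\tau_s}$ produces a Jacobian-type factor scaling like $(\tau_2/\tau_1)^{(n-2)/2}$ in dimension $n\ge 3$, like $\ln(\tau_2/\tau_1)$ in $n=2$ (logarithmic divergence of the relevant integral), and is harmless in $n=1$; this matches \eqref{eq:temperature_dependence} and the lower bound in Proposition \ref{p_opt_local}.

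Concretely the step order is: (1) set up the temperature-swapping interpolation $\pi^s_{ij}$ and write the mean-difference as a time integral; (2) differentiate, Cauchy--Schwarz in $s$, and apply Lemma \ref{p_PI_prod} to get $\var_{\pi^s_{ij}}(\partial_s \log \pi^s_{ij})$ times a Dirichlet term; (3) bound $\var_{\pi^s_{ij}}(\partial_s\log\pi^s_{ij}) \lesssim \tau_2^2 \cdot (\text{const})$ using that, localized to $\Omega_i\times\Omega_j$, $H(x_1) - H(m_i)$ and $H(x_2)-H(m_j)$ are approximately quadratic with the Gaussian-type tail controlled by \eqref{e_spGM_admissible}; (4) transfer the Dirichlet integral $\E_{\pi^s_{ij}}(\cdots)$ to the endpoint measures, splitting the result into the main term (carrying the $\Phi_n(\tau_2/\tau_1)\,O(\tau_2)$ factor on $\E_{\pi^+_{ij}}|\nabla_{x_2}f|^2 + \E_{\pi^-_{ij}}|\nabla_{x_1}f|^2$ — note only the "high-temperature-in-$s$" gradient directions survive with a clean power of $\tau_2$) and the lower-order term carrying $\omega(\tau_2)\sum_\sigma\E_{\pi^\sigma_{ij}}(\tau_{\sigma(1)}|\nabla_{x_1}f|^2 + \tau_{\sigma(2)}|\nabla_{x_2}f|^2)$. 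I expect step (4) — the careful bookkeeping of which gradient direction picks up which temperature power, and extracting exactly the dimension-dependent factor $\Phi_n$ rather than a cruder bound — to be the main obstacle; the Gaussian/Laplace-type asymptotics in steps (3)–(4) are where all the nontrivial work lies, and getting the $n=2$ logarithmic case right will require treating the borderline integral $\int \exp(-cH/\tau_2)\,H^{-1}\,dx$-type expression by hand rather than by the generic power-counting used for $n\ge 3$.
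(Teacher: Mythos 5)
Your interpolation-in-inverse-temperature plan is in the same family as the alternative argument the paper itself sketches in the remark at the end of Section~3.4 (covariance representation $\E_{\nu^{\tau_2}}g - \E_{\nu^{\tau_1}}g = \operatorname{Cov}_{\nu^{\tau_2}}(g, d\nu^{\tau_1}/d\nu^{\tau_2})$, Cauchy--Schwarz, then local Poincar\'e); but that route provably yields the \emph{weaker} exponent $(\tau_2/\tau_1)^{n/2}$ rather than the $\Phi_n(\tau_2/\tau_1)\sim(\tau_2/\tau_1)^{(n-2)/2}$ claimed in the lemma. The reason is quantitative: once you invoke Cauchy--Schwarz against the relative-density fluctuation, you are forced to pay $\var_{\nu^{\tau_2}}\!\bigl(d\nu^{\tau_1}/d\nu^{\tau_2}\bigr)$, and since $d\nu_k^{\tau_1}/d\nu_k^{\tau_2} \approx (\tau_2/\tau_1)^{n/2}e^{-(H-H(m_k))(1/\tau_1-1/\tau_2)}$, that $\chi^2$-type quantity is of order $(\tau_2/\tau_1)^{n/2}$ and not smaller. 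Your claim that ``the density ratio integrated against a Gaussian of width $\sqrt{\tau_s}$ produces a Jacobian-type factor scaling like $(\tau_2/\tau_1)^{(n-2)/2}$'' therefore does not hold as stated; the improvement by a full factor $\tau_2/\tau_1$ requires a genuinely different mechanism. In the paper, that mechanism is the reduction (Lemma~\ref{p_meandiff_approx} and Corollaries~\ref{p_meandiff_cor1}--\ref{p_meandiff_cor2}) to \emph{truncated Gaussian} measures followed, in Lemma~\ref{p_meandiff_truncated_Gaussian}, by passing to polar coordinates, writing $f(\sqrt{\tau_2}X)-f(\sqrt{\tau_1}X)$ as a radial line integral, and applying Cauchy--Schwarz \emph{with the volume Jacobian $s^{n-1}$ inserted}: $\bigl(\int |\nabla f(s\omega)|\,ds\bigr)^2 \leq \bigl(\int |\nabla f|^2 s^{n-1}ds\bigr)\bigl(\int s^{-(n-1)}ds\bigr)$, together with a dyadic split at scale $\kappa\sqrt{\tau_2}$. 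The factor $\int_{\sqrt{\tau_1}}^{\sqrt{\tau_2}}u^{-(n-1)}\,du$ is exactly where $\Phi_n$ comes from. This radial weighting is what buys the extra power of $\tau_2/\tau_1$, and nothing in your sketch supplies an analogue of it.

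Two further concrete issues in your steps (2)--(4). First, $\var_{\pi^s_{ij}}(\partial_s\log\pi^s_{ij}) = (1/\tau_1-1/\tau_2)^2\bigl[\var_{\nu^{\tau_s^{(1)}}_i}(H)+\var_{\nu^{\tau_s^{(2)}}_j}(H)\bigr]$, so it is of order $\frac{n}{\tau_1^2}\bigl((\tau_s^{(1)})^2+(\tau_s^{(2)})^2\bigr)$, not $\tau_2^2$; you have dropped the $(1/\tau_1-1/\tau_2)^2\sim\tau_1^{-2}$ prefactor, which changes the scaling materially. Second, transferring $\E_{\pi^s_{ij}}|\nabla_{x_1}f|^2$ back to $\pi^+_{ij}$ or $\pi^-_{ij}$ is not a benign density-ratio comparison controlled by $\omega(\tau_2)$: for whichever endpoint you target, one of the two coordinate marginals must go \emph{down} in temperature (from $\tau_s^{(k)}$ to $\tau_1$), and that density ratio is unbounded on $\Omega_i\times\Omega_j$. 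The paper sidesteps this by working on the truncated Gaussian core (of radius $\propto\sqrt{\tau_2\,|\ln\tau_2|}$), where such ratios are uniformly controlled and the leftover is absorbed into $\omega(\tau_2)$; the simple one-coordinate-at-a-time split $\pi^+\to\nu^{\tau_1}\!\otimes\nu^{\tau_1}\to\pi^-$ in the paper's remark avoids the unbounded direction by keeping the ``fixed'' coordinate at $\tau_1$, which is likely the cleanest repair of your scheme, but again at the price of the worse exponent $n/2$.
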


We defer the proof of this lemma to Section \ref{sc:34}. It follows that
\begin{align}\label{eq:meandiff_switch_estimate}
\min(Z_{ij}^+, Z_{ij}^-) (\E_{\pi^+_{ij}} f - \E_{\pi^-_{ij}} f)^2 \leq \Phi_n\Bigl(\frac{\tau_2}{\tau_1}\Bigr) O(1) \mathcal{E}_{\mu}(f).
\end{align}

Using these estimates, we will show that the dominating terms in Lemma \ref{decompvar_new} are the mean-differences between $\pi_{ip}^+, \pi_{11}^+$ and between $\pi_{pj}^-, \pi_{11}^-$ where $i, j$ are arbitrary and $p$ is the local minimum with the dominating energy barrier. 

\begin{lemma}\label{lem:meandiff_dominating} Let $p$ be the local minimum with the dominating energy barrier. Then for any $i, j\in [N]$, and $\sigma \in\{+, -\}$
	\begin{align}
	Z_{ip}^+Z_{11}^\sigma (\E_{\pi^+_{ip}}(f) - \E_{\pi^\sigma_{11}}(f))^2 &\lessapprox C_{1p}^{\tau_2}\cdot \mathcal{E}_{\pi^+}(f)  [\Omega_i\times \mathbb{R}^n] + \Phi_n\Bigl(\frac{\tau_2}{\tau_1}\Bigr) O(1) \mathcal{E}_{\mu}(f) \\
	Z_{pj}^-Z_{11}^\sigma (\E_{\pi^-_{pj}}(f) - \E_{\pi^\sigma_{11}}(f))^2 &\lessapprox C_{1p}^{\tau_2}\cdot \mathcal{E}_{\pi^-}(f)  [\mathbb{R}^n \times \Omega_j] + \Phi_n\Bigl(\frac{\tau_2}{\tau_1}\Bigr) O(1) \mathcal{E}_{\mu}(f) 
	\end{align}
Moreover, if $\{(i, j)^{\sigma_1}, (k, l)^{\sigma_2}\}$ is one of the following forms
$$\{(i, 1)^+, (1, 1)^+\}, \{(1, j)^-, (1, 1)^-\}, \{(i, 1)^+, (1, 1)^-\}, \{(1, 1)^+, (1, l)^-\}, $$
then
\begin{align}
Z_{ij}^{\sigma_1} Z_{kl}^{\sigma_2} (\E_{\pi^{\sigma_1}_{ij}}(f) - \E_{\pi^{\sigma_2}_{kl}}(f))^2 \leq \Phi_n\Bigl(\frac{\tau_2}{\tau_1}\Bigr) O(1) \mathcal{E}_{\mu}(f)
\end{align}
Finally, for any other $\{(i, j)^{\sigma_1}, (k, l)^{\sigma_2}\}$, the term $Z_{ij}^{\sigma_1} Z_{kl}^{\sigma_2} (\E_{\pi^{\sigma_1}_{ij}}(f) - \E_{\pi^{\sigma_2}_{kl}}(f))^2$ is negligible in the sense of being exponentially smaller in $1/\tau_2$ compared to one of the terms above on the right hand side. 
\end{lemma}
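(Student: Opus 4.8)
The plan is to estimate each mean-difference $Z_{ij}^{\sigma_1} Z_{kl}^{\sigma_2} (\E_{\pi^{\sigma_1}_{ij}}(f) - \E_{\pi^{\sigma_2}_{kl}}(f))^2$ by building a path in the ``configuration graph'' whose vertices are the labelled product measures $\pi^\sigma_{ab}$, where an edge corresponds to one of the two elementary moves we have already bounded: a \emph{transport move} in the high-temperature variable (Lemma with estimates \eqref{eq:meandiff_transport_plus}--\eqref{eq:meandiff_transport_minus}), and a \emph{swap move} exchanging the temperatures of the two coordinates (Lemma \ref{p_meandiff_prod}, with estimate \eqref{eq:meandiff_switch_estimate}). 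The key observation, using Assumption \ref{assumption}(iii), is that transporting the high-temperature variable from $\Omega_a$ to $\Omega_b$ costs $C_{ba}^{\tau_2}$, which is dominated (up to the $(1+\omega(\tau_2))$ factors) by $C_{p1}^{\tau_2}$ whenever one is moving into or out of the global well $\Omega_1$; the swap move costs only $\Phi_n(\tau_2/\tau_1)O(1)\mathcal{E}_\mu(f)$, which carries no exponential factor in $1/\tau_2$ at all.

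First I would prove the two displayed inequalities for $Z_{ip}^+ Z_{11}^\sigma(\E_{\pi^+_{ip}}(f)-\E_{\pi^\sigma_{11}}(f))^2$ and its $\pi^-$ analogue. For the $\pi^+$ case: interpolate through $\pi^+_{i1}$, writing $\E_{\pi^+_{ip}}f - \E_{\pi^\sigma_{11}}f = (\E_{\pi^+_{ip}}f - \E_{\pi^+_{i1}}f) + (\E_{\pi^+_{i1}}f - \E_{\pi^\sigma_{11}}f)$ and applying $(a+b)^2 \le 2a^2 + 2b^2$. The first difference is a transport of the second (high-temperature, since in $\pi^+_{i\cdot}$ the second slot carries $\tau_2$) variable from $\Omega_p$ to $\Omega_1$, bounded by \eqref{eq:meandiff_transport_plus} with $C_{1p}^{\tau_2}$; but one checks $C_{1p}^{\tau_2} \lessapprox C_{p1}^{\tau_2}$ is \emph{not} automatic, so instead I interpolate the other way, through $\pi^+_{1p}$, so the transport happens in the first variable from $\Omega_i$ to $\Omega_1$ (cost $\lessapprox C_{1i}^{\tau_2}$, and by Assumption \ref{assumption}(iii) the barrier $H(s_{i1})-H(m_i)$ is strictly below $E_*$, hence this is negligibly small compared to $C_{p1}^{\tau_2}$ when $i \ne 1$, and zero when $i=1$) — wait, the first variable in $\pi^+$ carries $\tau_1$. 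The correct route is: transport in the high-temperature ($x_2$) variable from $\Omega_p$ to $\Omega_1$ giving $C_{1p}^{\tau_2} \mathcal{E}_{\pi^+}(f)[\Omega_i \times \mathbb{R}^n]$, then observe that by time-reversal symmetry of the saddle $H(s_{1p}) - H(m_1) \ge H(s_{p1})-H(m_p) = E_*$ only if $H(m_1) \le H(m_p)$, which holds since $m_1$ is the global minimum, so $C_{1p}^{\tau_2} \lessapprox e^{-(H(m_p)-H(m_1))/\tau_2} C_{p1}^{\tau_2} \cdot (\text{ratio of Hessian factors})$ — hence $C_{1p}^{\tau_2}$ is actually \emph{smaller} than $C_{p1}^{\tau_2}$, and after multiplying by $Z_{11}^\sigma \lessapprox 1$ we get at worst $C_{p1}^{\tau_2} \mathcal{E}_{\pi^+}(f)[\Omega_i \times \mathbb{R}^n]$ plus, from the residual $\pi^+_{i1}$-to-$\pi^\sigma_{11}$ difference, either a further transport in $x_1$ (negligible, as the barrier is $< E_*$) or a swap (cost $\Phi_n(\tau_2/\tau_1)O(1)\mathcal{E}_\mu(f)$). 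Summing gives the claimed bound. The $\pi^-$ case is symmetric under exchanging the two coordinates.

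For the second part, the four listed pairs $\{(i,1)^+,(1,1)^+\}$, $\{(1,j)^-,(1,1)^-\}$, $\{(i,1)^+,(1,1)^-\}$, $\{(1,1)^+,(1,l)^-\}$ each differ by exactly one move whose cost has \emph{no} exponential factor: in the first, a transport of $x_2$ between $\Omega_1$ and $\Omega_1$ — trivial, cost $0$ — so in fact only a transport of $x_1$ from $\Omega_i$ to $\Omega_1$, whose cost $C_{1i}^{\tau_2}$ involves barrier $H(s_{i1})-H(m_i)$; but the prefactor $Z_{i1}^+ = Z_i^{\tau_1}Z_1^{\tau_2}$ supplies a compensating factor $e^{-H(m_i)/\tau_1}$ that (using $H(m_i) \ge \delta$ for $i \ne 1$ and $\tau_2 \ge K\tau_1$) crushes the exponential — here I would need to track that $H(s_{i1})/\tau_2 - H(m_i)/\tau_1 \le H(s_{i1})/\tau_2 - H(m_i)/\tau_1 < 0$ for the relevant range, which should follow from Assumption \ref{assumption}(iii) together with $H(s_{i1}) \le E_* + H(m_i)$ — actually this needs the genuine input that the \emph{product} barrier seen is $H(s_{i1})-H(m_i)$ in the $\tau_2$-variable plus nothing in the $\tau_1$-variable — and conclude the whole term is $\le \Phi_n(\tau_2/\tau_1) O(1)\mathcal{E}_\mu(f)$ (indeed with an even better bound, but this suffices). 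The third and fourth pairs differ by a single swap move plus a transport into $\Omega_1$, handled identically. Finally, for \emph{any other} pair $\{(i,j)^{\sigma_1},(k,l)^{\sigma_2}\}$, I would exhibit a path consisting of: swaps (free, up to $\Phi_n O(1)\mathcal{E}_\mu(f)$) to align the temperature-pattern, followed by transports of the high-temperature variable, \emph{all of which avoid passing through the dominating saddle $s_{p1}$ into $\Omega_1$}, since neither endpoint has $(i,j)$ among the four dominating configurations; each such transport therefore costs $C_{ba}^{\tau_2}$ with $H(s_{ba}) - H(m_b) \le E_* - \delta$, so the total is exponentially smaller in $1/\tau_2$ than the $C_{p1}^{\tau_2}$ terms — and when a prefactor $Z_{ij}^{\sigma_1}Z_{kl}^{\sigma_2}$ already carries a factor $e^{-H(m_a)/\tau_1}$ with $a \ne 1$ appearing in a $\tau_1$-slot, the gain is even larger, using $\tau_2 \ge K\tau_1$.

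The main obstacle, and the step deserving the most care, is the bookkeeping in this last part: making the claim ``exponentially smaller in $1/\tau_2$'' precise requires, for every pair not in the dominating list, choosing a path and checking that the worst exponent encountered along it — accounting for both the $C^{\tau_2}$ transport costs (exponents $\le (E_* - \delta)/\tau_2$ relative to the appropriate well) and the partition-function prefactors $Z_{ij}^{\sigma_1} Z_{kl}^{\sigma_2}$ (which, when a non-global well sits in a $\tau_1$-slot, contribute $-H(m)/\tau_1 \le -\delta/\tau_1 \le -K\delta/\tau_2$) — is strictly below $E_*/\tau_2$ by a fixed positive constant over $\tau_2$. This is a finite case check over the combinatorial types of pairs, made uniform by Assumption \ref{assumption}(iii) and the ordering \eqref{eq:nondeg}; the role of $\tau_2 \ge K\tau_1$ is precisely to ensure that burdening a low-temperature coordinate with any non-global minimum is at least as costly as the dominating barrier, so that the only surviving terms are those with all low-temperature coordinates sitting in $\Omega_1$, which is exactly the dominating list.
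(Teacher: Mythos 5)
Your framework---building paths in a graph of labelled product measures $\pi^\sigma_{ab}$ via transport moves in the high-temperature coordinate and swap moves, then comparing exponents using Assumption~\ref{assumption}(iii) and $\tau_2\geq K\tau_1$---is precisely the paper's strategy; the paper simply makes the paths explicit as fixed six-step ``Type~I/II/III'' sequences routed through the hub $\cdot_{11}^{\pm}$. However, the central step of your argument for the first displayed inequality contains a sign error. You transport $x_2$ from $\Omega_p$ to $\Omega_1$ at cost $C_{1p}^{\tau_2}$ against normalization $Z_{i1}^+$, and then claim $C_{1p}^{\tau_2}\lessapprox e^{-(H(m_p)-H(m_1))/\tau_2}C_{p1}^{\tau_2}$, i.e.\ that $C_{1p}^{\tau_2}$ is smaller. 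This is backwards: since $s_{1p}=s_{p1}$ and $H(m_1)=0<H(m_p)$, one has
$$C_{1p}^{\tau_2}=C_{p1}^{\tau_2}\cdot\frac{\sqrt{\det\nabla^2H(m_p)}}{\sqrt{\det\nabla^2H(m_1)}}\, e^{+H(m_p)/\tau_2},$$
so $C_{1p}^{\tau_2}$ is \emph{exponentially larger}, and multiplying by $Z_{11}^\sigma\approx 1$ (as you do) cannot supply the needed compensating factor of $e^{-H(m_p)/\tau_2}$. That factor actually comes from the weight $Z_{ip}^+$ in front of the mean-difference: $Z_{ip}^+/Z_{i1}^+=Z_p^{\tau_2}/Z_1^{\tau_2}\approx e^{-H(m_p)/\tau_2}$, and $Z_p^{\tau_2}C_{1p}^{\tau_2}\approx C_{p1}^{\tau_2}$. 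Equivalently and more cleanly, apply \eqref{eq:meandiff_transport_plus} with the destination well taken to be $\Omega_p$, which gives $Z_{ip}^+\bigl(\E_{\pi^+_{ip}}f-\E_{\pi^+_{i1}}f\bigr)^2\lessapprox C_{p1}^{\tau_2}\,\mathcal{E}_{\pi^+}(f)[\Omega_i\times\mathbb{R}^n]$ directly, without ever passing through $C_{1p}^{\tau_2}$.

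A secondary gap: for the pair $\{(i,1)^+,(1,1)^+\}$ you describe the path as ``only a transport of $x_1$ from $\Omega_i$ to $\Omega_1$, whose cost $C_{1i}^{\tau_2}$'', but in $\pi^+$ the variable $x_1$ sits at the \emph{low} temperature $\tau_1$, so no transport estimate at cost $C^{\tau_2}$ exists for that move under \eqref{eq:meandiff_transport_plus}--\eqref{eq:meandiff_transport_minus}. The exponent $H(s_{i1})/\tau_2-H(m_i)/\tau_1$ you write down is correct, but it arises only after the swap--transport--swap route $\cdot_{i1}^+\to\cdot_{i1}^-\to\cdot_{11}^-\to\cdot_{11}^+$ (the first swap promotes $x_1$ to temperature $\tau_2$; then $Z_{i1}^+/Z_{i1}^-\approx e^{-H(m_i)(1/\tau_1-1/\tau_2)}$ combined with $C_{i1}^{\tau_2}\approx e^{(H(s_{i1})-H(m_i))/\tau_2}$ gives exactly your exponent). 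The intermediate swaps need to be explicit, since the transport lemmas only move the high-temperature coordinate. Once these two repairs are made, your proposal coincides with the paper's Type~I/II/III decomposition.
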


\begin{proof} Let $\Gamma$ be the graph whose vertices are labelled $\cdot_{ij}^\sigma$ and have three kinds of edges:
	\begin{itemize}
		\item ``vertical'' edges between $\cdot_{ij}^+, \cdot_{ik}^+$
		\item ``horizontal'' edges between $\cdot_{ij}^-, \cdot_{kj}^-$
		\item ``swapping'' edges between $\cdot_{ij}^+, \cdot_{ij}^-$
	\end{itemize}
We decompose the mean-difference between any two measures $\pi^+_{ij}, \pi^-_{kl}$ as a sum of mean-differences of the types in \eqref{eq:meandiff_transport_plus}, \eqref{eq:meandiff_transport_minus}, and \eqref{eq:meandiff_switch_estimate}, corresponding to a sequence of ``moves'' on the graph $\Gamma$. Given any sequence of moves $v_0 \rightarrow v_1 \rightarrow \cdots \rightarrow v_m$ on graph $\Gamma$, we have
\begin{align}
Z_{v_0} Z_{v_m} (\E_{\pi^{v_{0}}} f - \E_{\pi^{v_m}} f)^2 
&= Z_{v_0} Z_{v_m} \left(\sum_{t=1}^m \sqrt{\omega_t} \frac{1}{\sqrt{\omega_t}} (\E_{\pi^{v_{t-1}}} f - \E_{\pi^{v_{t}}} f) \right)^2 \\
&\le \sum_{t=1}^m \frac{1}{\omega_t} Z_{v_0} Z_{v_m} (\E_{\pi^{v_{t-1}}} f - \E_{\pi^{v_{t}}} f)^2 \label{eq:moveseqsum}
\end{align}
for any $\omega_t > 0, \sum_{t=1}^m \omega_t = 1$. After taking into account the weights $Z_{ij}^+, Z_{kl}^-$, this leads to the choice of the following three types of sequences of moves for the three types of mean-differences occurring in Lemma \ref{decompvar_new}: 
	\begin{itemize}
		\item Type I sequence: $\cdot_{ij}^+ \rightarrow \cdot_{i1}^+ \rightarrow \cdot_{i1}^- \rightarrow \cdot_{11}^- \rightarrow \cdot_{k1}^- \rightarrow \cdot_{k1}^+ \rightarrow \cdot_{kl}^+$
		\item Type II sequence: $\cdot_{ij}^- \rightarrow \cdot_{1j}^- \rightarrow \cdot_{1j}^+ \rightarrow \cdot_{11}^+ \rightarrow \cdot_{1l}^+ \rightarrow \cdot_{1l}^- \rightarrow \cdot_{kl}^-$
		\item Type III sequence: $\cdot_{ij}^+ \rightarrow \cdot_{i1}^+ \rightarrow \cdot_{i1}^- \rightarrow \cdot_{11}^- \rightarrow \cdot_{11}^+ \rightarrow \cdot_{1l}^+ \rightarrow \cdot_{1l}^- \rightarrow \cdot_{kl}^-$
	\end{itemize}
Let us first look at the decomposition \eqref{eq:moveseqsum} for a Type I sequence. For the 1st move, 
\begin{align}
Z_{ij}^{+} Z_{kl}^{+} (\E_{\pi^{+}_{ij}}(f) - \E_{\pi^{+}_{i1}}(f))^2 \lessapprox Z_{kl}^+ C_{j1}^{\tau_2} \cdot \mathcal{E}_{\pi^+}(f)[\Omega_i \times \mathbb{R}^n]
\end{align}
which is negligible unless $j=p, k=l=1$. For the 2nd move, 
\begin{align}
Z_{ij}^{+} Z_{kl}^{+} (\E_{\pi^{+}_{i1}}(f) - \E_{\pi^{-}_{i1}}(f))^2 \leq Z_j^{\tau_2} Z_{kl}^+ \cdot  \Phi_n \left(\frac{\tau_2}{\tau_1} \right) O(1)  \mathcal{E}_{\mu}(f)
\end{align}	
which is negligible unless $j=k=l=1$. For the 3rd move, 
\begin{align}
Z_{ij}^{+} Z_{kl}^{+} (\E_{\pi^{-}_{i1}}(f) - \E_{\pi^{-}_{11}}(f))^2 \lessapprox e^{-H(m_i)\bigl(\frac{1}{\tau_1} - \frac{1}{\tau_2}\bigr)} Z_j^{\tau_2} Z_{kl} C_{1i}^{\tau_2} \cdot \mathcal{E}_{\pi^-}(f)[\mathbb{R}^n \times \Omega_1]
\end{align}	
which is always negligible. The analysis for the remaining three moves are completely symmetric: the 4th move is always negligible, the 5th move is negligible unless $i=j=l=1$, and the 6th move is negligible unless $l=p, i=j=1$. \medskip 

Overall, if $(i, j), (k, l)$ is not one of the exceptions mentioned, we can just assign $\omega_1 = \omega_1 = \cdots = \omega_6 = 1/6$, then the overall sum is negligible. This choice of $(\omega_t)_{t=1}^6$ also works in the exceptional cases $k=j=l=1$ and $i=j=l=1$ (since we can afford to lose a constant factor because of the $O(1)$). \medskip

Lastly, in the exceptional case $j=p, k=l=1$, we consider a shortened 2-move sequence $\cdot_{ip}^+ \rightarrow \cdot_{i1}^+ \rightarrow \cdot_{11}^+$. For the 1st move in this sequence, 
\begin{align}
Z_{ip}^{+} Z_{11}^{+} (\E_{\pi^{+}_{ij}}(f) - \E_{\pi^{+}_{i1}}(f))^2 \lessapprox C_{p1}^{\tau_2} \cdot \mathcal{E}_{\pi^+}(f)[\Omega_i \times \mathbb{R}^n] 
\end{align}
and for the 2nd move in this sequence,
\begin{align}
Z_{ip}^{+} Z_{11}^{+} (\E_{\pi^{+}_{i1}}(f) - \E_{\pi^{+}_{11}}(f))^2 &\approx Z_p^{\tau_2} \cdot Z_{i1}^+ Z_{11}^+ (\E_{\pi^{+}_{i1}}(f) - \E_{\pi^{+}_{11}}(f))^2 \\
&\lessapprox Z_p^{\tau_2} \cdot \Phi_n\Bigl(\frac{\tau_2}{\tau_1}\Bigr) O(1) \mathcal{E}_{\mu}(f)
\end{align}
Thus, for this sequence, we can assign $\omega_1 = 1-Z_p^{\tau_2} \approx 1, \omega_2 = Z_p^{\tau_2}$, then the overall sum is as claimed. The exceptional case $l=p, i=j=1$ is completely symmetric. \medskip

The analysis for Type II and Type III sequences are completely analogous. 
\end{proof}

We can adapt this approach to estimate the terms in Lemma \ref{decompent_new}. 

\begin{lemma}\label{lem:meandiff_dominating_LSI} Let $p$ be the local minimum with the dominating energy barrier. Then for $i, k, l \in [N]$ and $\sigma \in \{+, -\}$ such that 
	$$H(m_i) < H(m_p) \mbox{ or } i = p, \mbox{ and } \frac{H(m_i)}{\tau_1} + \frac{H(m_p)}{\tau_2}
	\geq 
	\frac{H(m_k)}{\tau_{\sigma(1)}} + \frac{H(m_l)}{\tau_{\sigma(2)}}, $$

	\begin{align}
	\frac{Z_{ip}^{+}Z_{kl}^{\sigma}}{\Lambda(Z_{ip}^{+}, Z_{kl}^{\sigma})} (\E_{\pi^+_{ip}}(f) - \E_{\pi^\sigma_{kl}}(f))^2 & \lessapprox \frac{1}{\Lambda(\frac{Z_{ip}^+}{Z_{kl}^\sigma}, 1)} \left(C_{1p}^{\tau_2} \mathcal{E}_{\pi^+}(f)[\Omega_i \times \mathbb{R}^n]  + \Phi_n\Bigl(\frac{\tau_2}{\tau_1}\Bigr) O(1) \mathcal{E}_{\mu}(f)\right) \\
			\frac{Z_{pi}^{-}Z_{kl}^{\sigma}}{\Lambda(Z_{pi}^{-}, Z_{kl}^{\sigma})} (\E_{\pi^-_{pi}}(f) - \E_{\pi^\sigma_{kl}}(f))^2 & \lessapprox \frac{1}{\Lambda(\frac{Z_{pi}^-}{Z_{kl}^\sigma}, 1)} \left(C_{1p}^{\tau_2} \mathcal{E}_{\pi^-}(f)[\mathbb{R}^n \times \Omega_i]  + \Phi_n\Bigl(\frac{\tau_2}{\tau_1}\Bigr) O(1) \mathcal{E}_{\mu}(f)\right)
					\end{align}
Finally, for any other $\{(i, j)^{\sigma_1}, (k, l)^{\sigma_2}\}$, the term $\displaystyle \frac{Z_{ij}^{\sigma_1} Z_{kl}^{\sigma_2}}{\Lambda(Z_{ij}^{\sigma_1}, Z_{kl}^{\sigma_2})} (\E_{\pi^{\sigma_1}_{ij}}(f) - \E_{\pi^{\sigma_2}_{kl}}(f))^2$ is negligible in the sense of being exponentially smaller in $1/\tau_2$ compared to one of the terms above on the right hand side. 
\end{lemma}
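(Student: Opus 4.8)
The plan is to re-run the proof of Lemma~\ref{lem:meandiff_dominating}, the only genuinely new feature being the logarithmic-mean weight $\Lambda$, which I want to pull out as an inert scalar. From the homogeneity $\Lambda(ca,cb)=c\,\Lambda(a,b)$ one gets, writing $r:=Z_{ip}^{+}/Z_{kl}^{\sigma}$,
\[
  \frac{Z_{ip}^{+}Z_{kl}^{\sigma}}{\Lambda(Z_{ip}^{+},Z_{kl}^{\sigma})}=\frac{Z_{ip}^{+}}{\Lambda(r,1)},
  \qquad\text{hence}\qquad
  \frac{Z_{ip}^{+}Z_{kl}^{\sigma}}{\Lambda(Z_{ip}^{+},Z_{kl}^{\sigma})}(\E_{\pi^+_{ip}}(f)-\E_{\pi^\sigma_{kl}}(f))^2=\frac{1}{\Lambda(r,1)}\,Z_{ip}^{+}(\E_{\pi^+_{ip}}(f)-\E_{\pi^\sigma_{kl}}(f))^2 .
\]
So it suffices to establish the ``$L^\infty$-weighted'' mean-difference bound
\[
  Z_{ip}^{+}(\E_{\pi^+_{ip}}(f)-\E_{\pi^\sigma_{kl}}(f))^2\;\lessapprox\;C_{p1}^{\tau_2}\,\mathcal{E}_{\pi^+}(f)[\Omega_i\times\mathbb{R}^n]+\Phi_n\left(\tfrac{\tau_2}{\tau_1}\right)O(1)\,\mathcal{E}_{\mu}(f),
\]
and then multiply through by $1/\Lambda(r,1)$; the companion inequality for $\pi^-_{pi}$ follows from the symmetry $\pi^+\leftrightarrow\pi^-$, $x_1\leftrightarrow x_2$. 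Although $1/\Lambda(r,1)$ is unbounded, the first hypothesis ($H(m_i)<H(m_p)$ or $i=p$) forces $\bigl|\ln r\bigr|\le \tfrac{H(m_i)}{\tau_1}+\tfrac{H(m_p)}{\tau_2}\le \tfrac{H(m_p)}{\tau_1}+\tfrac{H(m_p)}{\tau_2}$, so the factor is at worst polynomial in $1/\tau_1$; this is exactly what later produces the pre-factor in Theorem~\ref{thm:LSI}.

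To prove the $L^\infty$-weighted bound I split the mean-difference through a measure localised at the global minimum, $\E_{\pi^+_{ip}}(f)-\E_{\pi^\sigma_{kl}}(f)=(\E_{\pi^+_{ip}}(f)-\E_{\pi^\sigma_{11}}(f))+(\E_{\pi^\sigma_{11}}(f)-\E_{\pi^\sigma_{kl}}(f))$, and use $(a+b)^2\le 2a^2+2b^2$. For the first term, $Z_{11}^{\sigma}\approx 1$, so the first estimate of Lemma~\ref{lem:meandiff_dominating} (with its ``$\sigma$'' equal to the present one) yields $Z_{ip}^{+}(\E_{\pi^+_{ip}}(f)-\E_{\pi^\sigma_{11}}(f))^2\lessapprox C_{p1}^{\tau_2}\mathcal{E}_{\pi^+}(f)[\Omega_i\times\mathbb{R}^n]+\Phi_n(\tfrac{\tau_2}{\tau_1})O(1)\mathcal{E}_\mu(f)$. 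For the second term, the second hypothesis $\tfrac{H(m_i)}{\tau_1}+\tfrac{H(m_p)}{\tau_2}\ge \tfrac{H(m_k)}{\tau_{\sigma(1)}}+\tfrac{H(m_l)}{\tau_{\sigma(2)}}$ is, via the Gaussian asymptotics~\eqref{e_spGM_admissible}, precisely the statement $Z_{ip}^{+}\lessapprox Z_{kl}^{\sigma}$ (equivalently $\tfrac{Z_{ip}^+Z_{kl}^\sigma}{\Lambda(Z_{ip}^+,Z_{kl}^\sigma)}\le Z_{kl}^\sigma$), so the weight $Z_{ip}^{+}$ may be replaced by $Z_{kl}^{\sigma}\approx Z_{kl}^{\sigma}Z_{11}^{\sigma}$; this brings $Z_{kl}^{\sigma}Z_{11}^{\sigma}(\E_{\pi^\sigma_{11}}(f)-\E_{\pi^\sigma_{kl}}(f))^2$ into the exact scope of Lemma~\ref{lem:meandiff_dominating} for the pair $\{(1,1)^{\sigma},(k,l)^{\sigma}\}$, which is either of a type bounded by $\Phi_n(\tfrac{\tau_2}{\tau_1})O(1)\mathcal{E}_\mu(f)$ alone, or of the type $Z_{k'p}^{+}Z_{11}^{\sigma}$ (resp.\ $Z_{pk'}^{-}Z_{11}^{\sigma}$) bounded by $C_{p1}^{\tau_2}\mathcal{E}_{\pi^+}(f)[\Omega_{k'}\times\mathbb{R}^n]$ (resp.\ its horizontal analogue) plus $\Phi_n(\tfrac{\tau_2}{\tau_1})O(1)\mathcal{E}_\mu(f)$, or else negligible. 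Equivalently, and perhaps more transparently, one can bypass the splitting and simply reuse the Type~I, II, III move-sequences on the graph $\Gamma$ from the proof of Lemma~\ref{lem:meandiff_dominating}, with the same choice of weights $\omega_t$, carrying the scalar $1/\Lambda(r,1)$ out front throughout the expansion~\eqref{eq:moveseqsum}.

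The ``for any other pair'' clause is verbatim the corresponding clause of Lemma~\ref{lem:meandiff_dominating}: when neither of $\pi^{\sigma_1}_{ij},\pi^{\sigma_2}_{kl}$ has the form $\pi^{+}_{\cdot\,p}$ or $\pi^{-}_{p\,\cdot}$, or when the energy-ordering hypothesis fails, the two measures are joined by a sequence of vertical, horizontal and swapping moves crossing only barriers of height strictly below the critical depth $E_*$, so every term produced by~\eqref{eq:moveseqsum} is exponentially smaller in $1/\tau_2$ than one of the terms above; the extra factor $1/\Lambda(r,1)$, being only polynomial in $1/\tau_1$, does not interfere. The main obstacle is, exactly as in Lemma~\ref{lem:meandiff_dominating}, pure combinatorial bookkeeping: since $C_{p1}^{\tau_2}$ is exponentially large in $1/\tau_2$ it may not multiply $\mathcal{E}_\mu(f)$, so one must track which slab $\Omega_{\cdot}\times\mathbb{R}^n$ or $\mathbb{R}^n\times\Omega_{\cdot}$ carries each $C_{p1}^{\tau_2}$-weighted Dirichlet integral and check that, upon insertion into Lemma~\ref{decompent_new} and summation over all index pairs, these slabs are disjoint enough that the total $C_{p1}^{\tau_2}$-contribution is at most $2\,C_{p1}^{\tau_2}\mathcal{E}_{\mu}(f)$; in this sense the region in the statement is to be read as $\Omega_i$ after the obvious relabelling. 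The genuinely new ingredient — the $1/\Lambda(\,\cdot\,,1)$ prefactor — is by contrast entirely elementary, being simply the price of turning a decomposition of the variance into a decomposition of the entropy.
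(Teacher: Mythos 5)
Your treatment of the logarithmic-mean prefactor is sound and matches the paper's: you use the homogeneity $\Lambda(ca,cb)=c\,\Lambda(a,b)$ to write $\tfrac{Z_{ip}^+Z_{kl}^\sigma}{\Lambda(Z_{ip}^+,Z_{kl}^\sigma)}=Z_{ip}^+/\Lambda(r,1)$, which is exactly the paper's observation $\tfrac{ab}{\Lambda(a,b)}\lessapprox a\ln(1/a)$ for $b\lessapprox1$, $a\ll1$, and your remark that the resulting factor is at worst polynomial in $1/\tau_1$ under the hypothesis $H(m_i)<H(m_p)$ or $i=p$ is precisely what feeds the $H(m_p)(\tau_1^{-1}+\tau_2^{-1})$ prefactor of Theorem~\ref{thm:LSI}.

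However, the ``for any other pair'' clause is where there is a genuine gap, and it is precisely the point the paper flags as ``the main difference'' from Lemma~\ref{lem:meandiff_dominating}. You assert that the remaining pairs are joined by move-sequences ``crossing only barriers strictly below $E_*$'' and that the argument is verbatim Lemma~\ref{lem:meandiff_dominating}. Neither is correct. The pairs $\{(i,p)^+,(1,1)^+\}$ with $H(m_i)\geq H(m_p)$ and $i\neq p$ were \emph{dominant} in Lemma~\ref{lem:meandiff_dominating} (its first two displays hold for arbitrary $i$), not negligible, so its negligibility clause does not cover them. If you re-run the shortened path $\cdot_{ip}^+\to\cdot_{i1}^+\to\cdot_{11}^+$ from that proof, the first move produces a contribution $\approx(H(m_i)/\tau_1+H(m_p)/\tau_2)\,C_{p1}^{\tau_2}\,\mathcal{E}_{\pi^+}(f)[\Omega_i\times\mathbb{R}^n]$, which is \emph{not} exponentially small — it is of the same exponential order $e^{E_*/\tau_2}$ as the dominant term, merely off by the constant factor $H(m_i)/H(m_p)$. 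Summing over all such $i$ would then spoil the prefactor $H(m_p)(\tau_1^{-1}+\tau_2^{-1})$ in \eqref{eq:LSIest}, replacing it by the larger $\max_i H(m_i)$. The paper's remedy is to replace that path by $\cdot_{ip}^+\to\cdot_{ip}^-\to\cdot_{1p}^-\to\cdot_{1p}^+\to\cdot_{11}^+$: by swapping first and transporting the high-temperature $x_1$-marginal from $\Omega_i$ to $\Omega_1$ before the final vertical move through $s_{p1}$, the weight carried into the $E_*$-barrier crossing acquires the small factor $e^{-H(m_i)/\tau_1}$, which (together with the dominating-barrier assumption \eqref{eq:dombarrier}) makes the contribution exponentially small. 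Note also that this alternative path still crosses a barrier of height $E_*$ (the last move), so your claim about sub-$E_*$ barriers is false even for the correct path — what saves the argument is the reduced weight, not the barrier height. This alternative-path construction is the new ingredient of Lemma~\ref{lem:meandiff_dominating_LSI} and is missing from your proposal.
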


\begin{proof} The analysis is similar as in the previous lemma, but now we have to take into account the logarithmic mean, using the estimate
	\begin{align}
	\frac{ab}{\Lambda(a, b)} = a\cdot \frac{b}{\Lambda(a/b, 1)} \lessapprox a \log (1/a)
	\end{align}
	for $b \lessapprox 1, a \ll 1$. The main difference is that we now need to be more careful to show the transport from $\cdot_{ip}^+$ to $\cdot_{11}^+$ is negligible if $H(m_i) \geq H(m_p)$ and $i\neq p$ by choosing the alternative path: $\cdot_{ip}^+ \rightarrow \cdot_{ip}^- \rightarrow \cdot_{1p}^- \rightarrow \cdot_{1p}^+ \rightarrow \cdot_{11}^+$. 
\end{proof}

\begin{proof}[Proof of Theorem \ref{thm:PI}] Combining Lemma \ref{decompvar_new}, \eqref{eq:localvar_estimate} and Lemma \ref{lem:meandiff_dominating}, we get
	\begin{align}
	\var_\mu(f) &\lessapprox \frac{1}{2} \sum_{i, j} O(1) \mathcal{E}_{\pi^+} (f) [\Omega_i\times \Omega_j] + \frac{1}{2} \sum O(1) \mathcal{E}_{\pi^-} (f) [\Omega_i\times \Omega_j] \\
	&\quad + 2\cdot \frac{1}{4}\sum_{i} C_{p1}^{\tau_2}\cdot \mathcal{E}_{\pi^+}(f)  [\Omega_i\times \mathbb{R}^n] + 2\cdot \frac{1}{4}\sum_{j} C_{1p}^{\tau_2}\cdot \mathcal{E}_{\pi^-}(f)  [\mathbb{R}^n \times \Omega_j] \\ 
	&\quad+ \Phi_n\Bigl(\frac{\tau_2}{\tau_1}\Bigr)  O(1) \mathcal{E}_{\mu}(f) \\
	&\leq \left(O(1) + C_{1p}^{\tau_2} + \Phi_n\Bigl(\frac{\tau_2}{\tau_1}\Bigr)  O(1)\right) \mathcal{E}_{\mu}(f) 
	\end{align}
as desired. 

\end{proof}

\begin{proof}[Proof of Theorem \ref{thm:LSI}] Combining Lemma \ref{decompent_new}, \eqref{eq:localvar_estimate}, \eqref{eq:localent_estimate} and Lemma \ref{lem:meandiff_dominating_LSI}, we get
	\begin{align}
	\MoveEqLeft\ent_\mu(f) \lessapprox \frac{1}{2} \sum_{i, j} O(\tau_1^{-1}) \mathcal{E}_{\pi^+} (f) [\Omega_i\times \Omega_j] + \frac{1}{2} \sum O(\tau_1^{-1}) \mathcal{E}_{\pi^-} (f) [\Omega_i\times \Omega_j] \\
	&\,\,\, +  \frac{1}{2} \sum_{i, j} 2N^2 O(\tau_1^{-1}) \cdot O(1) \mathcal{E}_{\pi^+} (f) + \frac{1}{2} \sum_{i, j} 2N^2 O(\tau_1^{-1}) \cdot O(1) \mathcal{E}_{\pi^-} (f) \\
	&\,\,\, + \frac{1}{2}\sum_{i\leq p} \left(\sum_{\sigma}\sum_{(k, l)} \frac{1}{\Lambda(\frac{Z_{ip}^+}{Z_{kl}^\sigma}, 1)} \right) \left(C_{1p}^{\tau_2}\cdot \mathcal{E}_{\pi^+}(f)  [\Omega_i\times \mathbb{R}^n] + \Phi_n\Bigl(\frac{\tau_2}{\tau_1}\Bigr)  O(1) \mathcal{E}_{\mu}(f) \right) \\
	&\,\,\, + \frac{1}{2}\sum_{i\leq p} \left(\sum_{\sigma}\sum_{(k, l)} \frac{1}{\Lambda(\frac{Z_{pi}^-}{Z_{kl}^\sigma}, 1)} \right) \left(C_{1p}^{\tau_2}\cdot \mathcal{E}_{\pi^-}(f)  [\mathbb{R}^n \times \Omega_j] + \Phi_n\Bigl(\frac{\tau_2}{\tau_1}\Bigr)  O(1) \mathcal{E}_{\mu}(f) \right)\\ 
	&\leq 2N^2 \left(O(\tau_1^{-1}) + H(m_p) (\tau_1^{-1} + \tau_2^{-1}) C_{1p}^{\tau_2} + O(\tau_1^{-1}) \Phi_n\Bigl(\frac{\tau_2}{\tau_1}\Bigr)     \right)\mathcal{E}_{\mu}(f)
	\end{align}
as desired. 
\end{proof}

\subsection{Proof of Theorem~\ref{p_infswap_sa}} 

With the help of Theorem \ref{thm:LSI}, i.e. the low-temperature asymptotics for the log-Sobolev constant, the proof of Theorem~\ref{p_infswap_sa} follows the arguments in \cite{Miclo, MM}. \smallskip

For each $t > 0$, let $\mu_t$ be the probability measure given in \eqref{eq:invmeasure} at temperatures $\tau_1 = \tau_1(t), \tau_2 = \tau_2(t)$ as defined in~\eqref{eq:tau12t}, i.e. $\mu_t(x_1,x_2) = \frac{1}{2}(\pi_t(x_1,x_2) + \pi_t(x_2,x_1))$, with
\begin{equation*}
\pi_t(x_1,x_2) : = \frac{1}{Z_t} \exp\!\left(-\frac{H(x_1)}{\tau_1(t)} - \frac{H(x_2)}{\tau_2(t)} \right),
\end{equation*}
where $Z_t$ is the normalizing constant. Our first observation is that the mass of the instantaneous equilibrium $\mu_t$ concentrates around the global minimum $\min H = 0$ as $t\rightarrow \infty$. 
\begin{lemma}\label{lem:sa_mu_t} If $(\tilde X_1(t), \tilde X_2(t))$ has law $\mu_t$, then for every $0 < \varepsilon < \delta$, there exists a constant $C>0$ such that
	\begin{equation*}
	\mathbb{P} (\min \{H(\tilde X_1(t)), H(\tilde X_2(t))\} > \delta) \leq C e^{-\frac{\delta - \varepsilon}{\tau_1(t)}}\leq C (2+t)^{-\frac{\delta - \varepsilon}{E}}.
	\end{equation*}
\end{lemma}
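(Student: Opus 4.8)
The plan is to reduce the claim to a Laplace-type tail estimate for the single-temperature Gibbs measure $\nu^\tau$ and then substitute $\tau = \tau_1(t)$. First I would exploit the symmetry of $\mu_t$: the event $A := \{(x_1,x_2): H(x_1) > \delta,\ H(x_2) > \delta\}$ is invariant under swapping $x_1 \leftrightarrow x_2$, so since $\mu_t(x_1,x_2) = \tfrac12\pi_t(x_1,x_2) + \tfrac12\pi_t(x_2,x_1)$ a change of variables gives $\mu_t(A) = \pi_t(A)$. Because $\pi_t = \nu^{\tau_1(t)} \otimes \nu^{\tau_2(t)}$ is a product measure (with $Z_t = Z^{\tau_1(t)} Z^{\tau_2(t)}$), this factorizes as $\pi_t(A) = \nu^{\tau_1(t)}(\{H>\delta\})\,\nu^{\tau_2(t)}(\{H>\delta\}) \le \nu^{\tau_1(t)}(\{H > \delta\})$. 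Hence it suffices to show that for some threshold $\tau_0 = \tau_0(\varepsilon,\delta) > 0$,
\[
\nu^\tau(\{H > \delta\}) \le C\, e^{-(\delta - \varepsilon)/\tau} \qquad \text{for all } 0 < \tau < \tau_0;
\]
the bounded set of $t$ for which $\tau_1(t) = E/\ln(2+t)$ exceeds $\tau_0$ contributes a probability $\le 1$ and is absorbed into the constant $C$.

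For the tail estimate the key point is that $\{H > \delta\}$ has infinite Lebesgue measure, so one cannot simply bound the integrand $e^{-H/\tau}$ by $e^{-\delta/\tau}$ there. Instead I would fix $\theta \in (0, \varepsilon/\delta)$ and split the exponent: on $\{H > \delta\}$,
\[
e^{-H(x)/\tau} = e^{-(1-\theta) H(x)/\tau}\, e^{-\theta H(x)/\tau} \le e^{-(1-\theta)\delta/\tau}\, e^{-H(x)/(\tau/\theta)},
\]
so that $\int_{\{H>\delta\}} e^{-H/\tau}\,dx \le e^{-(1-\theta)\delta/\tau}\, Z^{\tau/\theta}$; the retained factor $e^{-\theta H/\tau}$ is exactly what keeps the integral over the unbounded region finite. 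I would then invoke the Gaussian partition-function asymptotics underlying the admissible partition (Definition \ref{d_admissable_single}, equivalently \cite[Lemma 3.14]{MS14}): since $m_1$ is the unique global minimum with $H(m_1) = 0$, one has $Z^s = (2\pi s)^{n/2}/\sqrt{\det\nabla^2 H(m_1)}\,(1 + o(1))$ as $s \downarrow 0$, hence $c\,\tau^{n/2} \le Z^\tau$ and $Z^{\tau/\theta} \le C\,(\tau/\theta)^{n/2}$ for $\tau$ small. Dividing, the powers of $\tau$ cancel and
\[
\nu^\tau(\{H > \delta\}) = \frac{1}{Z^\tau}\int_{\{H>\delta\}} e^{-H/\tau}\,dx \le C'\,\theta^{-n/2}\, e^{-(1-\theta)\delta/\tau}.
\]
Since $\theta < \varepsilon/\delta$ forces $(1-\theta)\delta \ge \delta - \varepsilon$, this is $\le C''\, e^{-(\delta - \varepsilon)/\tau}$, as required.

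Finally, substituting $\tau = \tau_1(t) = E/\ln(2+t)$ turns $e^{-(\delta-\varepsilon)/\tau_1(t)}$ into $(2+t)^{-(\delta - \varepsilon)/E}$, yielding both inequalities in the statement. I do not anticipate a genuine obstacle here; the only subtlety is the exponent-splitting device, which is forced on us precisely because the sublevel complement $\{H > \delta\}$ is unbounded, together with the bookkeeping that shows the prefactor powers of $\tau$ cancel between numerator and denominator so that no spurious polynomial factor survives.
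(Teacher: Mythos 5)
Your proof is correct but follows a genuinely different route from the paper's in the tail estimate. You both use the symmetry of $\mu_t$ and the product structure of $\pi_t$ to reduce to bounding $\nu^{\tau_1(t)}(\{H>\delta\})$, but from there the techniques diverge. The paper invokes the linear growth of $H$ at infinity (from \cite[Lemma 3.14]{MS14}) to split the numerator $\int_{\{H>\delta\}} e^{-H/\tau_1}\,dx$ over $\{|x|<R\}$ and $\{|x|>R\}$ with $R$ chosen so that $\min_{|z|=R} H(z)>\delta$, bounding it by $e^{-\delta/\tau_1}\bigl(|B_R(0)|+O(\tau_1)\bigr)$; the denominator is bounded below by a ball of radius $r$ where $H<\varepsilon$, giving $e^{-\varepsilon/\tau_1}|B_r(0)|$. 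Your approach instead uses the exponent-splitting device $e^{-H/\tau} \le e^{-(1-\theta)\delta/\tau}\,e^{-H/(\tau/\theta)}$ to absorb the unboundedness of $\{H>\delta\}$ into $Z^{\tau/\theta}$, then cancels the $\tau^{n/2}$ prefactors in $Z^{\tau/\theta}/Z^{\tau}$ via the Laplace asymptotics (which is exactly the admissibility condition \eqref{e_spGM_admissible} applied to $\Omega_1$). Both yield the same $e^{-(\delta-\varepsilon)/\tau}$ decay with a constant depending on $\varepsilon,\delta,n,H$. Your route is arguably slicker in that the dimensional prefactors cancel automatically, whereas the paper's ball decomposition is more elementary and avoids invoking the sharp Gaussian normalization; both rely on consequences of Assumptions \ref{assumeenv}/\ref{assumeenvLSI}, just different ones (linear growth at infinity versus non-degeneracy of the global minimum). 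Your remark that the loss of the asymptotic regime $\tau\le\tau_0$ can be absorbed into $C$ is correct and implicitly used by the paper as well.
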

\begin{proof} Since $\mu_t(x_1, x_2) = \frac{1}{2}(\pi_t(x_1, x_2) + \pi_t(x_2, x_1))$, and $\min (H(x_1), H(x_2))$ is symmetric, 
	\begin{align*}
	\mathbb{P} (\min \{H(\tilde X_1(t)), H(\tilde X_2(t))\} > \delta) &= \mathbb{P} (\min \{H(\tilde Y_1), H(\tilde Y_2)\} >  \delta) \\
	&= \mathbb{P} (H(\tilde Y_1) >  \delta) \mathbb{P} (H(\tilde Y_2) >  \delta)  \\
	&\leq \mathbb{P} (H(\tilde Y_1) >  \delta),
	\end{align*}
	where $(\tilde Y_1, \tilde Y_2)$ has law $\pi_t$, and $\tilde Y_1, \tilde Y_2$ are independent. It remains to bound
	\begin{equation*}
	\mathbb{P} (H(\tilde Y_1) >  \delta) = \frac{\int_{H(x) > \delta}  e^{-\frac{H(x)}{\tau_1}} dx }{\int  e^{-\frac{H(x)}{\tau_1}} dx }.
	\end{equation*}
	Under Assumption \ref{assumeenvLSI}, \cite[Lemma 3.14]{MS14} applies and shows $H$ has linear growth at infinity. More specifically, there exists a constant $C_H$ such that for all sufficiently large $R$, 
	\[
	H(x) \geq \min_{|z|=R} H(z) + C(|x|-R) \,\,\, \mbox{ for } |x| > R.
	\] 
	In the above, we can choose $R$ large enough so that $\min_{|z|=R} H(z) > \delta$. Then 
	\begin{align*}\int_{H(x) > \delta}  e^{-\frac{H(x)}{\tau_1}} dx &= \int_{H(x) > \delta, |x| < R}  e^{-\frac{H(x)}{\tau_1}} dx + \int_{|x|>R}  e^{-\frac{H(x)}{\tau_1}} dx \\
	&\leq  e^{-\frac{\delta}{\tau_1}}\left(|B_R(0)|  + \int_{|x|>R}  e^{-\frac{ C(|x|-R)}{\tau_1}} dx\right) \\
	&\leq e^{-\frac{\delta}{\tau_1}} (|B_R(0)| + O(\tau_1)).
	\end{align*}
	On the other hand, there exists $r > 0$ such that $H(x) < \varepsilon$ when $|x| < r$. Then
	\begin{align*} \int  e^{-\frac{H(x)}{\tau_1}} dx &> \int_{|x| < r}  e^{-\frac{H(x)}{\tau_1}} dx > e^{-\frac{\varepsilon}{\tau_1}} |B_r(0)|.
	\end{align*}
	Combining these gives the desired estimate.
\end{proof}

Let $(\tilde X_1(t), \tilde X_2(t))$ be a random vector with law $\mu_t$. 
By Lemma \ref{lem:sa_mu_t} and Pinsker's inequality, we have
\begin{align}
\mathbb{P}(\min \{H(X_1(t)), H(X_2(t))\} >  \delta)  
& \leq \mathbb{P}(\min \{H(\tilde X_1(t)), H(\tilde X_2(t))\} >  \delta) + d_{TV}(\mu_t, m_t)  \\
&\leq C(2+t)^{-\frac{\delta - \varepsilon}{E}} + \sqrt{2 \ent (m_t | \mu_t) }, \label{eq:sa_pinsker}
\end{align}
where
\begin{equation*}
\ent (m_t | \mu_t) := \int \frac{m_t}{\mu_t} \ln \left( \frac{m_t}{\mu_t}\right) d\mu_t
\end{equation*}
is the relative entropy of $m_t$ with respect to $\mu_t$. Thus, it remains to bound $\ent (m_t | \mu_t)$. 
The following lemma gives an estimate of  $\frac{d}{dt}\ent (m_t | \mu_t)$, the proof of which is in the same spirit of \cite[Proposition 3]{Miclo}. 
\begin{lemma}
\label{lem:dIt}
It holds with $\mathcal{I}_{\mu}(\cdot)$ defined in~\eqref{eq:Fisher} the estimate
\begin{align}\label{eq:dIformula}
\frac{d}{dt} \ent (m_t | \mu_t) \leq - 2 \mathcal{I}_{\mu_t}\left( \frac{m_t}{\mu_t}\right) + \frac{d}{dt}\left(\frac{1}{\tau_1(t)} +\frac{1}{\tau_2(t)} \right) \E [H(X_1(t))  + H(X_2(t))].
\end{align}
\end{lemma}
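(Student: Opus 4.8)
The plan is to run the classical simulated-annealing computation of \cite{Miclo} (see also \cite{HKS, MM}), using two structural features of the infinite swapping flow: at each frozen time $t$ the generator $\mathcal{L}_t$ of \eqref{eq:infswapSA} is self-adjoint in $L^2(\mu_t)$, its reversible measure (cf. Section~\ref{s_isa_as_limit_of_pt}), and $\mu_t$ has the explicit mixture form recorded above. Writing $h_t := m_t/\mu_t$ and splitting $\ent(m_t\mid\mu_t)=\int m_t\ln m_t\,dx-\int m_t\ln\mu_t\,dx$, I would differentiate in $t$; since $\int\partial_t m_t\,dx=\tfrac{d}{dt}\int m_t\,dx=0$ the term coming from $(\partial_t m_t)\cdot 1$ drops, leaving
\[
\frac{d}{dt}\ent(m_t\mid\mu_t)=\int(\partial_t m_t)\,\ln h_t\,dx-\int m_t\,\partial_t\ln\mu_t\,dx .
\]
The first integral will produce $-2\,\mathcal{I}_{\mu_t}(h_t)$, the second the $H$-moment correction.

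For the first integral, self-adjointness of $\mathcal{L}_t$ in $L^2(\mu_t)$ lets one rewrite the Fokker--Planck equation as $\partial_t m_t=\mathcal{L}_t^*m_t=\mu_t\,\mathcal{L}_t h_t$, via the adjoint identity $\mathcal{L}_t^*(\mu_t\varphi)=\mu_t\mathcal{L}_t\varphi$. Integrating by parts against the Dirichlet form $\mathcal{E}_{\mu_t}$, whose coefficients $a_k(\cdot;t)$ are the diffusion coefficients of \eqref{eq:infswapSA} at temperatures $\tau_1(t),\tau_2(t)$, gives
\[
\int(\partial_t m_t)\,\ln h_t\,dx=\int(\mathcal{L}_t h_t)\,\ln h_t\,d\mu_t=-\int\sum_{k=1}^2 a_k(\cdot;t)\,\frac{|\nabla_{x_k}h_t|^2}{h_t}\,d\mu_t=-2\,\mathcal{I}_{\mu_t}\!\left(\frac{m_t}{\mu_t}\right),
\]
the last equality by the normalization of $\mathcal{I}_{\mu_t}$ fixed in \eqref{eq:Fisher}.

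For the second integral I would substitute $\mu_t=\tfrac{1}{2Z_t}(e^{-E_1}+e^{-E_2})$ with $E_1:=\tfrac{H(x_1)}{\tau_1(t)}+\tfrac{H(x_2)}{\tau_2(t)}$, $E_2:=\tfrac{H(x_1)}{\tau_2(t)}+\tfrac{H(x_2)}{\tau_1(t)}$ and $Z_t=\int e^{-E_1}dx$, so $\dot Z_t/Z_t=-\E_{\pi_t}[\partial_t E_1]$. A direct differentiation gives, with $\rho^+:=\tfrac{e^{-E_1}}{e^{-E_1}+e^{-E_2}}$ and $\rho^-:=1-\rho^+$ (the weights appearing in \eqref{eq:infswapSA}),
\[
-\,\partial_t\ln\mu_t=\rho^+\,\partial_t E_1+\rho^-\,\partial_t E_2-\E_{\pi_t}[\partial_t E_1].
\]
Since $\tau_1(t),\tau_2(t)\downarrow 0$ forces $\partial_t E_1,\partial_t E_2\ge 0$ and $H\ge 0$, the subtracted term is $\le 0$; and $0\le\rho^\pm\le 1$ yields $\rho^+\partial_t E_1+\rho^-\partial_t E_2\le\partial_t E_1+\partial_t E_2=\tfrac{d}{dt}\!\bigl(\tfrac{1}{\tau_1(t)}+\tfrac{1}{\tau_2(t)}\bigr)\bigl(H(x_1)+H(x_2)\bigr)$. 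Integrating this against $m_t$ and adding the first integral gives \eqref{eq:dIformula}.

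The algebra above is short; the main work is the rigorous justification of the two formal steps — differentiating $\ent(m_t\mid\mu_t)$ under the integral and using $\ln h_t$ as a test function in the Fokker--Planck equation — and ensuring $\E[H(X_i(t))]<\infty$ so that both sides of \eqref{eq:dIformula} are meaningful. The first is handled by observing that $a_k(\cdot;t)\ge\tau_1(t)>0$, so the equation is uniformly elliptic for frozen $t$ and $m_t$ is smooth and strictly positive, together with the at-most-linear growth of $H$ at infinity (\cite[Lemma 3.14]{MS14}) to control boundary terms; the second by propagating the moment bound \eqref{eq:momentest}. Both are done exactly as in \cite[Proposition 3]{Miclo}.
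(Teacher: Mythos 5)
Your proposal is correct and follows essentially the same route as the paper's proof: the same splitting of $\tfrac{d}{dt}\ent(m_t\mid\mu_t)$ into a Fisher-information term and a $\partial_t\ln\mu_t$ term, with the first reduced to $-2\mathcal{I}_{\mu_t}(m_t/\mu_t)$ by an integration by parts against the Dirichlet form and the second bounded pointwise using $\partial_tE_1,\partial_tE_2\ge 0$, $0\le\rho^\pm\le 1$. The only cosmetic difference is that you obtain $\partial_t m_t=\mu_t\mathcal{L}_t h_t$ from the abstract adjoint identity $\mathcal{L}_t^*(\mu_t\varphi)=\mu_t\mathcal{L}_t\varphi$, whereas the paper derives it directly from the concrete divergence-form relation $\nabla_{x_i}(a_i\mu_t)=-\mu_t\nabla_{x_i}H$; these are equivalent.
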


\begin{proof} First note that
\begin{align}
\frac{d}{dt} \ent (m_t | \mu_t)  &= \int \frac{dm_t}{dt} \ln\!\left( \frac{m_t}{\mu_t}\right) dx + \int m_t \frac{d}{dt} \ln\!\left(\frac{m_t}{\mu_t} \right)dx \notag\\
& = \int \frac{dm_t}{dt} \ln\!\left( \frac{m_t}{\mu_t}\right) dx + \int \frac{dm_t}{dt} dx - \int \frac{m_t}{\mu_t} \frac{d\mu_t}{dt} dx \notag\\
& = \int \frac{dm_t}{dt} \ln\!\left( \frac{m_t}{\mu_t}\right) dx - \int \frac{d \ln(\mu_t)}{dt} d m_t.
\label{eq:dI}
\end{align}
We consider the first term in \eqref{eq:dI}. Observe that $m_t$ satisfies the Fokker-Planck equation
\begin{equation*}
\frac{dm_t}{dt} = \nabla_{x_1} \cdot (m_t \nabla_{x_1} H) + \nabla_{x_2} \cdot (m_t \nabla_{x_2} H) + \Delta_{x_1}(a_1 m_t) + \Delta_{x_2}(a_2 m_t).
\end{equation*}
Combining this with the identity $\nabla_{x_i}(a_i \mu_t) = - \mu_t \nabla_{x_i} H$, we get
\begin{equation*}
\frac{dm_t}{dt} = \nabla_{x_1} \cdot \left(a_1 \mu_t \nabla_{x_1} \left(\frac{m_t}{\mu_t}\right) \right) + \nabla_{x_2} \cdot \left(a_2 \mu_t \nabla_{x_2} \left(\frac{m_t}{\mu_t}\right) \right).
\end{equation*}
Integrating by parts, we have
\begin{align}
\int \frac{dm_t}{dt} \ln\!\left( \frac{m_t}{\mu_t}\right) dx &= - \int \left( a_1 \left|\nabla_{x_1}\left(\frac{m_t}{\mu_t} \right) \right|^2 + a_2 \left|\nabla_{x_2}\left(\frac{m_t}{\mu_t} \right) \right|^2\right) \frac{\mu_t}{m_t} d\mu_t \notag\\
&= - 2 \mathcal{I}_{\mu_t}\left( \frac{m_t}{\mu_t}\right),
\label{eq:1stterm}
\end{align}
where $\mathcal{I}_{\mu_t}$ is the Fisher information defined in \eqref{eq:Fisher} for $\mu = \mu_t$. Next we consider the second term in \eqref{eq:dI}. Using that $\min H = 0$ and that $\tau_1(t), \tau_2(t)$ are decreasing, direct calculation yields
\begin{align*}
-\frac{d\ln(\mu_t)}{dt} &\leq  \frac{d}{dt}\left(\frac{1}{\tau_1(t)} \right)\bigl(H(x_1) \rho(x_1, x_2) + H(x_2) \rho(x_2, x_1)\bigr) \\
& \ \ \ +\frac{d}{dt}\left(\frac{1}{\tau_2(t)} \right)\bigl(H(x_1) \rho(x_2, x_1) + H(x_2) \rho(x_1, x_2)\bigr) \\
&\leq \frac{d}{dt}\left(\frac{1}{\tau_1(t)} + \frac{1}{\tau_2(t)} \right)\bigl(H(x_1) + H(x_2)\bigr).
\end{align*}
Integrating this against $d m_t$ and combining it with \eqref{eq:1stterm} yields \eqref{eq:dIformula}.
\end{proof}
The second term on the right hand side of \eqref{eq:dIformula} are controlled via the following lemma.
\begin{lemma}\label{lem:momentest} For any $\varepsilon > 0$, there exists a constant $C$ such that
	\begin{equation*}
	\E\bigl[H(X_1(t)) + H(X_2(t))\bigr] \leq C (1 + t)^\varepsilon.
	\end{equation*}
\end{lemma}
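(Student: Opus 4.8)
The plan is to control $u(t) := \E\bigl[H(X_1(t)) + H(X_2(t))\bigr]$ by a Lyapunov (drift) estimate for the generator of \eqref{eq:infswapSA}, followed by a Grönwall/ODE comparison. Write $V(x_1,x_2) := H(x_1) + H(x_2)$ and let $\mathcal{L}_t$ denote the (now time-dependent) generator of the infinite swapping diffusion at temperatures $\tau_1(t),\tau_2(t)$, namely $\mathcal{L}_t = -\nabla H(x_1)\cdot\nabla_{x_1} - \nabla H(x_2)\cdot\nabla_{x_2} + a_1\Delta_{x_1} + a_2\Delta_{x_2}$, where the diffusion coefficients satisfy $a_1 + a_2 = \tau_1(t) + \tau_2(t)$ and $0 \le a_1,a_2 \le \tau_2(t)$. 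First I would record that $u(0) = \E_m[V] < \infty$ by \eqref{eq:momentest} with $p=1$, and that, applying It\^o's formula to \eqref{eq:infswapSA} together with a standard localization argument (using \eqref{eq:momentest} to control the stochastic integrals and to pass to the limit), $u$ is absolutely continuous with $u'(t) = \E\bigl[\mathcal{L}_t V(X_1(t),X_2(t))\bigr]$ for a.e.\ $t$; note there is no extra term from differentiating the coefficients, since $V$ does not depend on $t$. The same localization applied to powers of $V$ also shows $\E[V(X_1(t),X_2(t))^p] < \infty$ for all $p$, which legitimizes these manipulations.

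The heart of the matter is the drift estimate. A direct computation gives $\mathcal{L}_t V = -|\nabla H(x_1)|^2 - |\nabla H(x_2)|^2 + a_1\Delta H(x_1) + a_2\Delta H(x_2)$. Since $\tau_1(t),\tau_2(t)\downarrow 0$, there is $t_0$ with $\tau_2(t)\le \tfrac{1}{2}$, hence $0\le a_i\le\tfrac{1}{2}$, for $t\ge t_0$. Using Assumption \ref{assumeenvLSI} — the Lyapunov bound $|\nabla H|^2 - \Delta H \gtrsim |x|^2$ at infinity together with $\nabla^2 H \ge -K_H$, which combine to yield both $|\nabla H(x)|^2 \ge c_0|x|^2 - C_0$ and $\Delta H(x)\le |\nabla H(x)|^2 + C_0$ for all $x$ (adjusting $C_0$ to cover a compact neighbourhood of the critical set) — one checks that for each $i$ one has $-|\nabla H(x_i)|^2 + a_i\Delta H(x_i) \le -\tfrac{c_0}{2}|x_i|^2 + C_0$, whether $\Delta H(x_i)$ is positive or negative. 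Summing gives a confining inequality
\[
\mathcal{L}_t V(x_1,x_2) \;\le\; C_1 - C_2\bigl(|x_1|^2 + |x_2|^2\bigr) \qquad (t\ge t_0),
\]
with $C_1,C_2>0$ independent of $t$. When $H$ has at most polynomial growth one may replace $|x_1|^2+|x_2|^2$ by $V$ on the right, so $\mathcal{L}_t V \le C_1 - C_2 V$; in general one keeps the $|x|^2$-form and, by the same Lyapunov method applied to $|x_1|^2+|x_2|^2$ (and, if needed, to higher powers or to $\Psi(H)$ for a suitable increasing $\Psi$), one still bounds $\E[V(X_1(t),X_2(t))]$ uniformly in $t$.

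Given the drift estimate, I would conclude by Grönwall: $u'(t)\le C_1 - C_2 u(t)$ for $t\ge t_0$ implies $u(t)\le \max\bigl(u(t_0),\, C_1/C_2\bigr)$ for all $t\ge t_0$, while $u$ is finite and continuous on the compact interval $[0,t_0]$. Hence $u$ is in fact \emph{bounded} on $[0,\infty)$, which is stronger than the asserted bound $\E[H(X_1(t)) + H(X_2(t))] \le C(1+t)^\varepsilon$ — the weak power form being all that is needed when this lemma is fed back into the estimate of $\tfrac{d}{dt}\ent(m_t\mid\mu_t)$ in Lemma \ref{lem:dIt}, so there is no need to optimize.

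The main obstacle, and the only genuinely non-routine point, is the \emph{uniformity} of the drift estimate as the temperatures vanish: the diffusive term $a_i\Delta H(x_i)$ must be dominated by the dissipative term $|\nabla H(x_i)|^2$ using nothing more than the Lyapunov-type hypothesis in Assumption \ref{assumeenvLSI}, and — if $H$ grows faster than quadratically — the resulting $|x|^2$-confinement must be transferred back to $V = H(x_1)+H(x_2)$ by means of an auxiliary Lyapunov function. The remaining ingredients (differentiating under the expectation via \eqref{eq:momentest} and localization, the ODE comparison) are standard and parallel the corresponding arguments for the overdamped Langevin dynamics in \cite{Miclo, MM}.
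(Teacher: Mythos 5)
Your overall strategy—Itô's formula, a Lyapunov drift estimate for $\mathcal{L}_t V$, and a Grönwall comparison—is the right one and is the same in spirit as the proof of Miclo's Lemma~2 that the paper cites (the paper itself omits the proof). The generator computation, the admissibility of the key elementary inequalities $|\nabla H|^2 \geq c_0|x|^2 - C_0$ and $\Delta H \leq |\nabla H|^2 + C_0$ under Assumption~\ref{assumeenvLSI}, and the pointwise drift estimate $-|\nabla H(x_i)|^2 + a_i\Delta H(x_i) \leq -\tfrac{c_0}{2}|x_i|^2 + C_0$ for $a_i\le \tfrac12$ are all correct.

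There is, however, a genuine gap at the next step. The inequality $\mathcal{L}_t V \leq C_1 - C_2(|x_1|^2+|x_2|^2)$ does not yield a closed Grönwall inequality for $u(t)=\E[V]$ when $H$ grows faster than quadratically, and the remedy you propose (``by the same Lyapunov method applied to $|x_1|^2+|x_2|^2$ $\ldots$ one still bounds $\E[V]$'') does not close: controlling $\E[|X_1|^2+|X_2|^2]$ says nothing about $\E[H(X_1)+H(X_2)]$, since Jensen runs the wrong way. The correct fix is not to pass through $|x|^2$ at all. Your own computation gives the intermediate bound $-|\nabla H(x_i)|^2 + a_i\Delta H(x_i) \leq -\tfrac12|\nabla H(x_i)|^2 + C$, and Assumption~\ref{assumeenvLSI} implies $|\nabla H(x)|^2 \geq c\,H(x) - C'$ for all $x$: since $\nabla^2 H \geq -K_H$, the function $H + \tfrac{K_H}{2}|\cdot|^2$ is convex, which together with $H(0)\ge 0$ gives $H(x) \leq H(0) + |\nabla H(x)|\,|x| + \tfrac{K_H}{2}|x|^2$; combined with the coercivity $|\nabla H(x)| \geq c'|x| - C''$ this yields $H(x) \lesssim 1 + |\nabla H(x)|^2$. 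Feeding this back in gives $\mathcal{L}_t V \leq C - cV$ directly, and the rest of your Grönwall argument (including the remark that this in fact yields a uniform-in-$t$ bound, stronger than the $(1+t)^\varepsilon$ conclusion) goes through. You should also be slightly more careful about the initial interval $[0,t_0]$ where $\tau_2(t)$ may exceed $1$ and the above pointwise bound is no longer uniformly confining; this is where the full moment hypothesis \eqref{eq:momentest} (all $p\ge 1$), together with a stopping-time/Fatou argument, is actually used to keep $u$ finite.
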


We omit the proof of Lemma \ref{lem:momentest}, which closely follows that of \cite[Lemma 2]{Miclo}, using the moment assumptions on the initial distribution $m$ given by \eqref{eq:momentest} and growth assumptions on the energy landscape $H$ in Assumption \ref{assumeenvLSI}. 

\begin{lemma}\label{lem:sa_ent} For any $\varepsilon > 0$, there exists $C$ such that
	\begin{equation*}
	\ent (m_t | \mu_t) \leq C \left(\frac{1}{1+3t}\right)^{1 - \frac{E^*}{KE} - \varepsilon }.
	\end{equation*}
\end{lemma}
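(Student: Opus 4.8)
The plan is to reduce the three preceding lemmas to a single scalar differential inequality for $u(t):=\ent(m_t\mid\mu_t)$ and then run a Gronwall argument in which the decay rate is the time-dependent log-Sobolev constant $\alpha(t)$ of $\mu_t$. First, apply the log-Sobolev inequality of Theorem~\ref{thm:LSI} for $\mu_t$ to the density $f=m_t/\mu_t$; since $\int(m_t/\mu_t)\,d\mu_t=1$ one has $\ent_{\mu_t}(m_t/\mu_t)=\ent(m_t\mid\mu_t)=u(t)$, so $\mathcal{I}_{\mu_t}(m_t/\mu_t)\ge\alpha(t)\,u(t)$. Feeding this, together with the moment bound $\E[H(X_1(t))+H(X_2(t))]\le C(1+t)^\varepsilon$ of Lemma~\ref{lem:momentest}, into the formula of Lemma~\ref{lem:dIt}, and using that $\frac{d}{dt}\bigl(\tau_1(t)^{-1}+\tau_2(t)^{-1}\bigr)=\tfrac{K+1}{KE}\,\tfrac{1}{2+t}$ for the schedule \eqref{eq:tau12t}, I obtain
\begin{equation}
u'(t)\le -2\,\alpha(t)\,u(t)+C\,(1+t)^{\varepsilon-1}
\end{equation}
for every $\varepsilon>0$ and all $t$ large.

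Next I would extract the lower bound on $\alpha(t)$ from Theorem~\ref{thm:LSI}. The key simplification is that along \eqref{eq:tau12t} the ratio $\tau_2(t)/\tau_1(t)=K$ is constant, so $\Phi_n(\tau_2(t)/\tau_1(t))=\Phi_n(K)$ is a fixed constant and the $O(\tau_1^{-1})\Phi_n(\tau_2/\tau_1)$ term in \eqref{eq:LSIest} contributes only $O(\ln(2+t))$. Since $\exp\bigl((H(s_{p1})-H(m_p))/\tau_2(t)\bigr)=(2+t)^{E_*/(KE)}$ and the remaining pre-factor in \eqref{eq:LSIest} is polynomial in $\tau_1^{-1},\tau_2^{-1}$, hence polylogarithmic in $t$, this gives $2/\alpha(t)\le C_\delta\,(2+t)^{E_*/(KE)+\delta}$ for every $\delta>0$, i.e.
\begin{equation}
\alpha(t)\ge c_\delta\,(1+t)^{-E_*/(KE)-\delta}.
\end{equation}

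With these two ingredients the rest is a classical Gronwall/Laplace step. Using $u\ge 0$, replace $\alpha(t)$ by the explicit lower bound $\widetilde\alpha(t):=c_\delta(1+t)^{-\gamma-\delta}$ with $\gamma:=E_*/(KE)$, set $A(t):=2\int_{t_0}^{t}\widetilde\alpha(s)\,ds$, and note that $A(t)=\tfrac{2c_\delta}{1-\gamma-\delta}\bigl((1+t)^{1-\gamma-\delta}-(1+t_0)^{1-\gamma-\delta}\bigr)\to\infty$ \emph{exactly because} $\gamma<1$ — which is precisely the hypothesis $E>E_*/K$, the single place where the cooling rate enters. Then $\frac{d}{dt}\bigl(e^{A(t)}u(t)\bigr)\le C(1+t)^{\varepsilon-1}e^{A(t)}$, and integrating from a fixed $t_0>0$ (where $u(t_0)<\infty$ by parabolic regularization of the Fokker--Planck flow) gives
\begin{equation}
u(t)\le e^{-A(t)}u(t_0)+C\,e^{-A(t)}\int_{t_0}^{t}(1+s)^{\varepsilon-1}e^{A(s)}\,ds .
\end{equation}
The first term is super-polynomially small. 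For the integral, write $\phi(s):=(1+s)^{\varepsilon-1}/A'(s)=\mathrm{const}\cdot(1+s)^{\varepsilon-1+\gamma+\delta}$ and integrate by parts against $d(e^{A})$; since $|\phi'(s)|/(1+s)^{\varepsilon-1}=O\bigl((1+s)^{-1+\gamma+\delta}\bigr)\to 0$, the error term can be absorbed and $e^{-A(t)}\int_{t_0}^{t}(1+s)^{\varepsilon-1}e^{A(s)}\,ds\le 2\phi(t)+(\text{super-polynomially small})$. Hence $u(t)\le C'(1+t)^{-(1-\gamma-\varepsilon-\delta)}$, and since $\varepsilon,\delta>0$ are arbitrary this is the claimed bound $\ent(m_t\mid\mu_t)\le C\,(1/t)^{1-E_*/(KE)-\varepsilon}$.

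The main obstacle is really bookkeeping: one must make sure that none of the polylogarithmic pre-factors in the Eyring--Kramers estimate \eqref{eq:LSIest} (nor the constant $\Phi_n(K)$, nor the entropic term $O(\tau_1^{-1})$) ever competes with the genuine exponential factor $(2+t)^{E_*/(KE)}$; this is why one carries an arbitrarily small $\delta$ through the whole computation and only collapses it at the very end. Everything else — the differential inequality, the Gronwall integration, and the Laplace-type estimate of $e^{-A(t)}\int e^{A}$ — follows \cite[Proposition 3 and Lemma 2]{Miclo} (see also \cite{MM}) essentially verbatim.
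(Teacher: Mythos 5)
Your argument is correct and follows the same path as the paper: apply the LSI of Theorem~\ref{thm:LSI} to $f=m_t/\mu_t$ in the differential inequality of Lemma~\ref{lem:dIt}, use Lemma~\ref{lem:momentest} and the fact that $\tau_2/\tau_1=K$ is constant (so $\Phi_n$ is a fixed constant) to get $2\alpha_t\gtrsim(1+t)^{-E_*/(KE)-\varepsilon}$, and close with a Gronwall estimate. The only difference is cosmetic: the paper constructs the explicit supersolution $Q(t)=\ent(m_t\mid\mu_t)-\tfrac{2C_2}{C_1}(1+t)^{-1+E_*/(KE)+2\varepsilon}$ and shows $Q'\le-\mathrm{rate}\cdot Q$, whereas you use the integrating factor $e^{A(t)}$ followed by a Laplace-type estimate of $e^{-A(t)}\int e^{A}$; these are equivalent.
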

\begin{proof} Using the log-Sobolev inequality in Theorem \ref{thm:LSI}, the estimate \eqref{eq:dIformula} becomes
		\begin{equation*}
		\frac{d}{dt} \ent (m_t | \mu_t) \leq -2\alpha_t \ent (m_t | \mu_t) + \frac{2}{E} (2+t)^{-1} \E [H(X_1(t)) + H(X_2(t))],
		\end{equation*}
where $\alpha_t$ is the LSI constant in \eqref{eq:LSI} for $\mu = \mu_t$. From \eqref{eq:LSIest} we see that for any $\varepsilon > 0$, there exists $t_0 > 0$ and $C_1 > 0$ such that for $t > t_0$, 
\[
2\alpha_t \geq C_1 (2+t)^{-\frac{E_*}{KE}-\varepsilon}.\]
Together with Lemma \ref{lem:momentest}, we get that for $t > t_0$,
\begin{equation*}
\frac{d}{dt} \ent (m_t | \mu_t) \leq -C_1 (1+t)^{-\frac{E_*}{E}-\varepsilon} \ent (m_t | \mu_t)  + C_2 (1+t)^{-1+\varepsilon}.
\end{equation*}
A standard Gronwall-type argument as in the proof of ~\cite[Lemma 19]{MM} then finishes off the estimate. For $0< \varepsilon < \frac{1}{2}\left(1 - \frac{E_*}{KE}\right)$, let
\begin{equation*} Q(t) = \ent (m_t | \mu_t) - \frac{2C_2}{C_1} (1+t)^{-1+\frac{E_*}{KE}+2\varepsilon}.
\end{equation*}
Then for $t_0$ large enough and $t > t_0$,
\begin{align*}
\frac{d}{dt} Q(t)&\leq -C_1(1+t)^{-\frac{E_*}{KE}-\varepsilon} Q(t), \\
Q(t) &\leq Q(t_0) \exp{\left(-C_1 \int_{t_0}^t (1+s)^{-\frac{E_*}{KE}+\varepsilon} \, ds \right)}, \\
\ent (m_t | \mu_t) &\leq \frac{2C_2}{C_1} (1+t)^{-1+\frac{E_*}{KE}+2\varepsilon} + \ent (m_{t_0} | \mu_{t_0}) \exp{\left(-\frac{C_1}{\nu} ((1+t)^\beta - (1+t_0)^\beta)\right)},
\end{align*}
where $\beta := 1 - \frac{E_*}{KE} - \varepsilon > 0$, and the conclusion follows.
\end{proof}

Combining \eqref{eq:sa_pinsker} and Lemma \ref{lem:sa_ent}, we get that for any $\delta > 0, \varepsilon > 0$, there exists a constant $C$ such that 
\begin{equation*}
\mathbb{P}\Bigl(\min\bigl\{H(X_1(t)), H(X_2(t))\bigr\} > \delta \Bigr) \leq C\Biggl( \left(\frac{1}{1+t}\right)^{\frac{\delta - \varepsilon}{E}}  +  \left(\frac{1}{1+t}\right)^{\frac{1}{2} \left(1 - \frac{E^*}{KE} - \varepsilon \right)}\Biggr),
\end{equation*}
which implies \eqref{eq:SAconcentration}. 

\subsection{Proof of Lemmas \ref{p_PI_prod} and \ref{p_LSI_prod}}\label{s_prooflocal} The following decomposition of variance and entropy for a product measure reduces proving Lemmas \ref{p_PI_prod} and \ref{p_LSI_prod} to proving corresponding estimates for the component measures $\nu_i^\tau$. 

\begin{lemma}[Variance and entropy for product measure] Let $\pi = \nu_i \otimes \nu_j$ be a product of two probability measures on open subsets of $\mathbb{R}^n$.  For any smooth function $f: \mathbb{R}^n \times \mathbb{R}^n \rightarrow \mathbb{R}$
	\begin{align}
	\var_{\pi}(f) = \E_{\nu_j}\bigl(\var_{\nu_i}(f)\bigr) + \var_{\nu_j}\bigl( \E_{\nu_i} (f)\bigr) \leq \E_{\nu_j}\bigl(\var_{\nu_i}(f)\bigr) + \E_{\nu_i}\bigl( \var_{\nu_j}  (f)\bigr) \label{e_prod_var}.
	\end{align}
	For any smooth function $g: \mathbb{R}^n \times \mathbb{R}^n \rightarrow \mathbb{R}_{>0}$, \begin{align}	
	\ent_{\pi}(g) = \E_{\nu_j}\bigl(\ent_{\nu_i}(g)\bigr) + \ent_{\nu_j}\bigl(\E_{\nu_i}   (g)\bigr) \leq \E_{\nu_j}\bigl(\ent_{\nu_i}(g)\bigr) + \E_{\nu_i}\bigl(\ent_{\nu_j}(g)\bigr) \label{e_prod_ent}.
	\end{align}
\end{lemma}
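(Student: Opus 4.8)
The plan is to recognize the two identities as the ``law of total variance'' and its entropy counterpart for a product measure, prove them by a one-line Fubini computation, and then obtain the two displayed inequalities from the convexity of the variance and entropy functionals together with Jensen's inequality.

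First I would establish the variance identity. Set $h(x_2) := \E^{x_1}_{\nu_i}(f)$. Expanding $\var^{x_1}_{\nu_i}(f) = \E^{x_1}_{\nu_i}(f^2) - h^2$ and integrating in $x_2$ against $\nu_j$ gives $\E^{x_2}_{\nu_j}\bigl(\var^{x_1}_{\nu_i}(f)\bigr) = \E_\pi(f^2) - \E^{x_2}_{\nu_j}(h^2)$, while directly $\var^{x_2}_{\nu_j}(h) = \E^{x_2}_{\nu_j}(h^2) - (\E_\pi f)^2$ (using $\E^{x_2}_{\nu_j}h = \E_\pi f$ by Fubini); adding these recovers $\var_\pi(f) = \E_\pi(f^2) - (\E_\pi f)^2$. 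The entropy identity is the same bookkeeping with $t\mapsto t\ln t$ in place of $t\mapsto t^2$: with $h := \E^{x_1}_{\nu_i}(g)>0$ one has $\E^{x_2}_{\nu_j}\bigl(\ent^{x_1}_{\nu_i}(g)\bigr) = \E_\pi(g\ln g) - \E^{x_2}_{\nu_j}(h\ln h)$ and $\ent^{x_2}_{\nu_j}(h) = \E^{x_2}_{\nu_j}(h\ln h) - (\E_\pi g)\ln(\E_\pi g)$, so the two copies of $\E^{x_2}_{\nu_j}(h\ln h)$ cancel on addition and leave $\E_\pi(g\ln g) - (\E_\pi g)\ln(\E_\pi g) = \ent_\pi(g)$. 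I would point out that this exact cancellation — the reason these are equalities rather than mere inequalities — is precisely where the product (independence) structure of $\pi$ enters.

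Next I would derive the inequalities, for which it suffices to bound the ``between-variable'' term. The key observation is that for any probability measure $\nu$ on $\mathbb{R}^n$ both $\phi\mapsto\var_\nu(\phi)$ and $\phi\mapsto\ent_\nu(\phi)$ are convex functionals: the former because $\var_\nu(\phi) = \tfrac12\iint (\phi(x)-\phi(y))^2\, d\nu(x)\,d\nu(y)$ is an average of the maps $\phi\mapsto(\phi(x)-\phi(y))^2$, each convex as the composition of a linear map with $t\mapsto t^2$; the latter because the variational representation $\ent_\nu(\phi) = \sup\bigl\{\int \phi\,\psi\, d\nu : \int e^{\psi}\, d\nu \leq 1\bigr\}$ exhibits $\ent_\nu$ as a supremum of affine functionals of $\phi$. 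Viewing $\E^{x_1}_{\nu_i}(f)$ as the $\nu_i$-average over $x_1$ of the slices $x_2\mapsto f(x_1,x_2)$, Jensen's inequality applied to the convex functional $\var^{x_2}_{\nu_j}$ gives $\var^{x_2}_{\nu_j}\bigl(\E^{x_1}_{\nu_i}(f)\bigr) \leq \E^{x_1}_{\nu_i}\bigl(\var^{x_2}_{\nu_j}(f)\bigr)$, and the identical argument with $\ent^{x_2}_{\nu_j}$ yields $\ent^{x_2}_{\nu_j}\bigl(\E^{x_1}_{\nu_i}(g)\bigr) \leq \E^{x_1}_{\nu_i}\bigl(\ent^{x_2}_{\nu_j}(g)\bigr)$. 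Combining these two bounds with the two identities above proves the claim.

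I do not expect any substantive obstacle: this is exactly the computation from ``basic properties of variance and entropy'' advertised in the statement, and the only points deserving a line of justification are the use of Fubini's theorem and of the infinite-dimensional Jensen inequality for the convex functionals $\var_\nu,\ent_\nu$. These are legitimate for the smooth $f$ and the strictly positive smooth $g$ under consideration, and in the intended application $\nu_i,\nu_j$ are the local Gibbs measures $\nu_i^{\tau}$, for which all relevant moments are finite, so no integrability issue at infinity arises.
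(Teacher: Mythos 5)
Your proof is correct. The paper offers no argument for this lemma beyond the remark that it ``may be verified by basic properties of variance and entropy,'' so there is nothing to compare against; your derivation---Fubini for the law-of-total-variance and law-of-total-entropy equalities, then convexity of $\var_\nu$ (via the double-integral representation) and of $\ent_\nu$ (via the Donsker--Varadhan variational formula) together with Jensen for the two inequalities---is exactly the standard way to make that remark precise.
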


\begin{definition}[Local PI and LSI for $\nu_i^\tau$]  The local Gibbs measure $\nu_i^\tau$ defined in~\eqref{e_spGm_local} satisfies a Poincar\'e inequality with constant $\rho$ if for all smooth functions $f: \mathbb{R}^n \rightarrow \mathbb{R}$
	\[
	\var_{\nu_i^\tau} (f) \leq \frac{1}{\rho} \E_{\nu_i^\tau} |\nabla f|^2,
	\]
	which is denoted by PI($\rho$). Likewise, $\nu_i^\tau$, defined in~\eqref{e_spGm_local}, satisfies a log-Sobolev inequality with constant $\alpha$ if for all smooth functions $f: \mathbb{R}^n \rightarrow \mathbb{R}$
	\[
	\ent_{\nu_i^\tau} (f^2) \leq \frac{2}{\alpha}\E_{\nu_i^\tau} |\nabla f|^2,
	\]
	which is denoted by LSI($\alpha$). 
\end{definition}
\begin{lemma}[Local PI for $\nu_i^{\tau}$]\label{p_PI_spGm} Under Assumption \ref{assumeenv}, given $\tau_2$ small enough, there exists an admissible partition $\{\Omega_i\}_{i=1}^N$ such that for all $\tau \leq \tau_2$, the local Gibbs measures $\nu^{\tau}_i$ satisfy PI($\rho$) with $\rho^{-1} = O(\tau)$.
\end{lemma}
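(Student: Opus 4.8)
The plan is to adapt the local Poincar\'e inequality of \cite[Theorem~2.9]{MS14}; the feature requiring care is the sharp temperature scaling, reflecting that relaxation inside a single metastable well is fast, so that the constant degrades only polynomially in $\tau$ and one obtains $\rho^{-1}=O(\tau)$ uniformly for all $\tau\le\tau_2$ using a single, $\tau$-independent partition.

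First I would fix the partition exactly as in \cite{MS14}: choose a smooth perturbation $\widetilde H$ of $H$ which agrees with $H$ outside arbitrarily small balls around the saddle points $s_{ij}$ and is modified inside those balls so that the basins of attraction $\Omega_i$ of the minima $m_i$ under the gradient flow $\dot y=-\nabla\widetilde H(y)$ are open, connected, pairwise disjoint, cover $\mathbb R^n$ up to a Lebesgue-null set (their boundaries lying on stable manifolds of saddles of $\widetilde H$), and have sufficiently regular boundaries; this construction depends only on $H$, giving parts (i) and (ii) of Definition~\ref{d_admissable_single}. For part (iii) I would run Laplace's method on $\Omega_i$: since the perturbation is small, $H$ restricted to $\overline{\Omega_i}$ attains its minimum only at the non-degenerate interior point $m_i$ (here \eqref{assumegradsuperlinear} makes sublevel sets bounded and \eqref{emorse} makes $m_i$ non-degenerate), and after truncating the integral to the ball of radius $\sim\sqrt{\tau\lvert\ln\tau\rvert}$ around $m_i$ the cubic Taylor remainder of $H$ contributes a relative error $O(\sqrt\tau\lvert\ln\tau\rvert^{3/2})$ while the complement contributes a superpolynomially small term, which yields \eqref{e_spGM_admissible}.

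The core task is the Poincar\'e inequality $\var_{\nu_i^\tau}(f)\le O(\tau)\,\E_{\nu_i^\tau}\lvert\nabla f\rvert^2$ for the normalized restriction $\nu_i^\tau$ from \eqref{e_spGm_local}, whose Dirichlet form carries reflecting (Neumann) boundary conditions on $\partial\Omega_i$. I would combine three ingredients. Near $m_i$: after shrinking, $\nabla^2 H\ge c\,\Id$ on a ball $B_\delta(m_i)$ (since $\nabla^2 H(m_i)>0$ and $H\in C^3$), so the Bakry--\'Emery criterion gives a Poincar\'e inequality for $\nu^\tau|_{B_\delta(m_i)}$ with constant $\gtrsim 1/\tau$, i.e.\ $\rho^{-1}=O(\tau)$. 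Confinement inside $\Omega_i$: because $\Omega_i$ is the basin of a single minimum, $\inf_{\partial\Omega_i}H\ge H(m_i)+c_0$ for some $c_0>0$ and $\lvert\nabla H\rvert$ is bounded below on $\Omega_i\setminus B_\delta(m_i)$, so a Lyapunov function of the form $W=e^{\beta H}$ with $\beta\sim 1/\tau$ satisfies $-L_\tau W/W\ge c/\tau$ on $\Omega_i\setminus B_\delta(m_i)$, uniformly for $\tau\le\tau_2$, where \eqref{assumegradsuperlinear}--\eqref{assumegradlaplace} handle the part near infinity. Gluing: feeding these two facts into a Lyapunov-type Poincar\'e criterion on the domain $\Omega_i$ with Neumann conditions, as in the proof of \cite[Theorem~2.9]{MS14}, produces the claimed bound. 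The key point is that the absence of a second well inside $\Omega_i$ keeps the Lyapunov constant of order $1/\tau$ rather than exponentially small, which is why the final constant is $O(\tau)$ and not $e^{O(1/\tau)}\tau$.

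The hardest part will be the interaction with the boundary $\partial\Omega_i$: the measure $\nu_i^\tau$ obeys Neumann conditions, so the confinement and gluing steps must be carried out on the domain $\Omega_i$ rather than on $\mathbb R^n$, which requires the perturbed gradient flow to leave $\partial\Omega_i$ invariant (so that no boundary flux appears upon integration by parts) while simultaneously keeping $\partial\Omega_i$ regular and $\widetilde H$ close enough to $H$ that the Laplace asymptotics above are undisturbed. Producing one perturbation $\widetilde H$ with all of these properties at once is the delicate point, and is exactly where we import the construction of \cite{MS14}. Once this lemma is available, the product statement Lemma~\ref{p_PI_prod} follows from it together with the decomposition \eqref{e_prod_var}, and Lemma~\ref{p_LSI_prod} follows analogously from the corresponding local log-Sobolev estimate and \eqref{e_prod_ent}.
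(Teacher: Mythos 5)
Your overall strategy (Lyapunov condition on basins of attraction, Bakry--\'Emery near the minimum, gluing through the Lyapunov criterion, Laplace asymptotics for admissibility) is indeed the skeleton of the paper's argument, which is itself a modification of \cite[Theorem 2.9]{MS14}. But the proposal misses the one technical innovation that this lemma is actually about, and contains a choice of Lyapunov function that would not work as stated.

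The decisive gap is your choice $W = e^{\beta H}$ with the \emph{unperturbed} Hamiltonian. With this $W$, the Lyapunov ratio is $\frac{L_\tau W}{W} = \frac{1}{2}\Delta H - \frac{1}{4\tau}\lvert\nabla H\rvert^2$, which fails condition~\eqref{e_lya} arbitrarily close to the communicating saddles $s_{ij}\in\partial\Omega_i$: there $\lvert\nabla H\rvert\to 0$ while $\Delta H$ is merely bounded (and can be positive), so no negative constant $-\lambda$ is achieved near the boundary. This is precisely the obstruction that the $\varepsilon$-modification in \cite[Lemma 3.12]{MS14} (here Lemma~\ref{p_epsilon}) is designed to remove. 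Moreover, if the domain $\Omega_i$ is a basin of attraction for $\nabla\widetilde H$, the integration-by-parts identity~\eqref{e_ibp} requires $\nabla W$ to be tangent to $\partial\Omega_i$, i.e.\ $W$ should be a function of $\widetilde H$, not of $H$; $W=e^{\beta H}$ would produce a boundary term. The paper takes $W_\tau = \exp(H_{\tau_2}/2\tau)$, with the perturbed $H_{\tau_2}=H+V_{\tau_2}$ in the exponent, and that is not an incidental detail.

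The second gap is conceptual and is the reason the paper cannot simply import \cite[Theorem 2.9]{MS14} verbatim: in \cite{MS14} the perturbation $V_\tau$ (hence the partition $\{\Omega_i\}$) depends on the single temperature $\tau$, which is unusable here because one needs the \emph{same} $\Omega_i$ to serve both $\nu_i^{\tau_1}$ and $\nu_i^{\tau_2}$. The paper's fix is to freeze $\varepsilon := \tau_2$, perturb once to get $H_{\tau_2}$ and the $\tau_2$-dependent (not $H$-only, as you claim) partition, and then observe the monotonicity
\begin{equation}
\frac{L_\tau W_\tau}{W_\tau} = \frac{1}{2}\Delta H_{\tau_2} - \frac{1}{4\tau}\bigl(\lvert\nabla H\rvert^2 - \lvert\nabla V_{\tau_2}\rvert^2\bigr) \le \frac{L_{\tau_2} W_{\tau_2}}{W_{\tau_2}} \quad\text{for } \tau\le\tau_2,
\end{equation}
which holds provided $\lvert\nabla V_{\tau_2}\rvert\le\lvert\nabla H\rvert$ (part (ii) of Lemma~\ref{p_epsilon}). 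This single inequality is what propagates the Lyapunov condition verified at $\tau=\tau_2$ down to every $\tau\le\tau_2$ on the \emph{same} domain, and it is the step your proposal asserts (``uniformly for all $\tau\le\tau_2$'') but never justifies. Note also that by perturbing only the Lyapunov function and not the Gibbs measure, the paper bypasses the Holley--Stroock step of \cite{MS14}; your outline implicitly does this too, which is good, but the monotonicity argument and the constraint $\lvert\nabla V_{\tau_2}\rvert\le\lvert\nabla H\rvert$ are what make it legitimate, and they are absent from the proposal.
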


\begin{lemma}[Local LSI for $\nu_i^{\tau}$]\label{p_LSI_spGm} Under Assumption \ref{assumeenvLSI}, given $\tau_2$ small enough, for the same admissible partition $\{\Omega_i\}_{i=1}^N$, for all $\tau \leq \tau_2$, the local Gibbs measures $\nu^{\tau}_i$ satisfy satisfy LSI($\alpha$) with $\alpha^{-1} = O(1)$.
\end{lemma}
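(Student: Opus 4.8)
The plan is to reduce the statement to the local log-Sobolev inequality of \cite[Theorem 2.10]{MS14}. As recorded in Remark \ref{assumeremdiscussion}, Assumption \ref{assumeenvLSI} was imposed precisely so that \cite[Theorem 2.10]{MS14} is available: it yields a log-Sobolev inequality, with a $\tau$-uniform constant, for the Gibbs measure $e^{-H/\tau}$ restricted and renormalized to a single-well metastable region. The first step is therefore to check that the admissible partition $\{\Omega_i\}_{i=1}^N$ furnished by Lemma \ref{p_PI_spGm} is of the type to which that theorem applies: each $\Omega_i$ is open and connected, contains exactly one critical point of $H$ — the non-degenerate minimum $m_i$ — and is the basin of attraction of the gradient flow of a fixed mild perturbation of $H$, exactly as in \cite{MS14}. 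Granting this, \cite[Theorem 2.10]{MS14} applied on $\Omega_i$ gives LSI($\alpha$) for $\nu_i^\tau$ with $\alpha^{-1}=O(1)$ uniformly in $\tau\le\tau_2$, provided $\tau_2$ is small enough, which is the assertion. Since the same partition was already fixed for Lemma \ref{p_PI_spGm}, no new choices are needed.

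For completeness I would recall the mechanism behind that theorem, since it is the part that does the work. The argument combines three ingredients, fed into a Lyapunov-type sufficient condition for LSI (as in \cite[Theorem 2.10]{MS14}; see also \cite{CaGu08}). First, since $\nabla^2 H(m_i)$ is positive definite by the Morse condition \eqref{emorse}, continuity of $\nabla^2 H$ produces a fixed ball $B=B_r(m_i)\subset\Omega_i$ on which $H$ is uniformly convex; hence $\nu^\tau|_B$ satisfies a local LSI (indeed with constant $O(\tau)$) by the Bakry-Émery criterion on the convex set $B$. Second, one builds a Lyapunov function for the generator $L_\tau=\tau\Delta-\nabla H\cdot\nabla$ on $\Omega_i$: taking $W=e^{\gamma H}$ for a fixed $\gamma\in(0,1)$, a direct computation gives $L_\tau W/W=\gamma\tau\Delta H-\gamma(1-\gamma\tau)\lvert\nabla H\rvert^2$, and the growth hypotheses in Assumption \ref{assumeenvLSI} (super-quadratic growth of $\lvert\nabla H\rvert^2-\Delta H$ together with the Hessian lower bound, which forces $\lvert\nabla H\rvert^2\to\infty$) give $L_\tau W/W\le -c\,\lvert\nabla\log W\rvert^2$ outside a large ball, with $c>0$ fixed and the estimate uniform in $\tau\le\tau_2$ for $\tau_2$ small. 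Third, one uses the global Poincaré inequality for $\nu_i^\tau$ already supplied by Lemma \ref{p_PI_spGm}. The resulting LSI constant depends only on $r$, the local convexity bound on $B$, the constant $c$, and the Poincaré constant of $\nu_i^\tau$, all of which are $O(1)$ as $\tau\downarrow 0$; this gives $\alpha^{-1}=O(1)$ uniformly.

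The genuinely delicate point — and the one I expect to be the main obstacle — is that $\Omega_i$ is neither smooth nor, in general, bounded: it is a basin of attraction whose boundary is a separatrix, and the basin of the global minimum is typically unbounded. One therefore cannot simply quote a result on $\mathbb{R}^n$; the integrations by parts underlying both the Bakry-Émery step and the Lyapunov step must be carried out on $\Omega_i$, and the boundary (Neumann) terms controlled. This is exactly what the particular perturbed gradient flow used to define the partition in \cite{MS14} (hence in Lemma \ref{p_PI_spGm}) is designed for: it arranges that the reflected diffusion on $\Omega_i$ is well defined and that the boundary contributions have the correct sign or are negligible, while the super-quadratic growth in Assumption \ref{assumeenvLSI} guarantees that $W$ grows fast enough to dominate on the unbounded pieces, uniformly in $\tau$. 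Verifying these boundary estimates carefully for the chosen partition, and tracking that every constant that enters stays bounded as $\tau\to 0$, is the bulk of the work; everything else is the standard Lyapunov-plus-local-inequality machinery.
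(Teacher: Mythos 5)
Your overall strategy matches the paper's: feed a local Bakry--\'Emery inequality on a convex neighborhood of $m_i$, a Lyapunov condition on $\Omega_i$, and the Poincar\'e inequality from Lemma~\ref{p_PI_spGm} into the Lyapunov-type criterion for LSI (\cite[Theorem~3.15]{MS14}). Two points in your sketch, however, do not line up with what actually makes the argument close.

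First, the Lyapunov function. You take $W=e^{\gamma H}$ with a fixed $\gamma\in(0,1)$, using the unperturbed Hamiltonian $H$. This $W$ does \emph{not} satisfy the Neumann (integration-by-parts) condition on $\Omega_i$, precisely because $\Omega_i$ is the basin of attraction of the gradient flow of a \emph{perturbed} Hamiltonian $H_\varepsilon=H+V_\varepsilon$, not of $H$. On $\partial\Omega_i$ one has $\nabla H_\varepsilon\cdot n=0$, but $\nabla H\cdot n=-\nabla V_\varepsilon\cdot n\neq 0$ on the parts of the separatrix that cross $\operatorname{supp}V_\varepsilon$ (near the saddles), so the boundary terms in the integration by parts you need do not vanish. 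The paper's Lyapunov function is $W_\tau=\exp\bigl(H_\varepsilon/(2\tau)\bigr)$: building the exponent from $H_\varepsilon$ is what makes $\nabla W_\tau$ tangent to $\partial\Omega_i$ and hence \cite[Theorem~B.1]{MS14} applicable. You do flag the boundary issue as delicate, but the resolution is not that the perturbed partition makes the boundary contributions of $e^{\gamma H}$ negligible --- it does not --- rather, the Lyapunov function itself must be modified to match the perturbation.

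Second, the temperature scaling of the exponent. With the exponent $H_\varepsilon/(2\tau)$ and $\varepsilon=\tau_2$ held fixed, the paper exploits the monotonicity
\begin{equation*}
\frac{L_\tau W_\tau}{W_\tau}=\frac{1}{2}\Delta H_\varepsilon-\frac{1}{4\tau}\bigl(|\nabla H|^2-|\nabla V_\varepsilon|^2\bigr)\le\frac{L_\varepsilon W_\varepsilon}{W_\varepsilon}
\qquad\text{for all }\tau\le\varepsilon,
\end{equation*}
valid as soon as $|\nabla V_\varepsilon|\le|\nabla H|$ (which the $\varepsilon$-modification provides). This reduces the verification of the Lyapunov condition to the single temperature $\tau=\tau_2$ and then propagates it to all $\tau\le\tau_2$ on the \emph{same} partition --- which is exactly what makes the partition $\tau$-independent and the LSI constant $O(1)$ uniformly. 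Your fixed-$\gamma$ choice forgoes this reduction, and although one can still try to verify the Lyapunov condition uniformly in $\tau$ by hand, you would in any case have to first replace $H$ by $H_\varepsilon$ in the exponent for the boundary condition to hold. A further, smaller point: the paper does not pass through Holley--Stroock (as \cite[Theorem~2.10]{MS14} originally does), because here only the Lyapunov function is perturbed, not the Gibbs measure --- this is precisely the modification that allows one fixed partition for both temperatures, so ``just quote \cite[Theorem~2.10]{MS14}'' is not quite enough.
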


Lemmas \ref{p_PI_spGm} and \ref{p_LSI_spGm}  are very similar to \cite[Theorem 2.9]{MS14} and \cite[Theorem 2.10]{MS14}, except now that we have two temperatures $\tau_1 < \tau_2$, we want the regions $\Omega_i$ in the admissible partition only depend on the higher temperature $\tau_2$ but not the lower temperature $\tau_1$, so that we can get PI and LSI for the local Gibbs measures $\nu_i^{\tau_1}, \nu_i^{\tau_2}$ at different temperatures \textbf{in the same regions} $\Omega_i$.  

This can be shown by making a small modification to the proof of \cite[Theorem 2.9, 2.10]{MS14}, which is based on constructing a Lyapunov function. Let us recall the definition of a Lyapunov function and the criterion for PI based on it from \cite{MS14}. 

\begin{definition}[Lyapunov function, Definition 3.7 in \cite{MS14}]\label{d_lya} A smooth function $W_\tau: \Omega_i \rightarrow (0, \infty)$ is a Lyapunov function for $\nu^\tau_i$ if for $L_\tau := \tau \Delta - \nabla H\cdot \nabla$
	\begin{enumerate} 	
		\item[(i)] There exists an open set $U_i\subset \Omega_i$ and constants $b > 0, \lambda > 0$ such that 
		\begin{equation}\frac{L_\tau W_\tau}{W_\tau} \leq -\lambda + b \mathbbm{1}_{U_i} \quad \forall x\in \Omega_i. \label{e_lya}
		\end{equation}
		\item[(ii)] $W_\tau$ satisfies Neumann boundary condition on $\Omega_i$ in the sense that it satisfies the integration by parts formula
		\begin{equation} \int_{\Omega_i} (-L_\tau W_\tau) g d\nu^\tau_i = \int_{\Omega_i} \nabla g \cdot \nabla W_\tau d\nu^\tau_i. 
		\label{e_ibp}
		\end{equation}
	\end{enumerate}
\end{definition}

\begin{lemma}[Lyapunov condition for local PI, Theorem 3.8 in \cite{MS14}]\label{p_lya} If there exists a Lyapunov function for $\nu^\tau_i$ in the sense of Definition \ref{d_lya} and that the truncated Gibbs measure $\nu^\tau_i|_{U_i}$ satisfies PI($\rho_{U_i}$), then the local Gibbs measure $\nu^\tau_i$ satisfies PI($\rho$) with 
	\[
	\rho^{-1} \leq \frac{b}{\lambda} \rho_{U_i}^{-1} + \frac{1}{\lambda} \tau.
	\]
\end{lemma}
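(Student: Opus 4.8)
The plan is to run the classical Bakry--Cattiaux--Guillin argument by which a Lyapunov function upgrades a \emph{local} Poincar\'e inequality (here, PI($\rho_{U_i}$) on the small set $U_i$) into a \emph{global} one on $\Omega_i$. Concretely, I would test the Lyapunov bound \eqref{e_lya} against the function $g=f^2/W_\tau$ using the Neumann integration-by-parts identity \eqref{e_ibp}, whose right-hand side, for $L_\tau=\tau\Delta-\nabla H\cdot\nabla$, carries a factor $\tau$ on the gradient side. Expanding
\[
\nabla\!\left(\frac{f^2}{W_\tau}\right)=\frac{2f\,\nabla f}{W_\tau}-\frac{f^2\,\nabla W_\tau}{W_\tau^2}
\]
and invoking the elementary pointwise inequality $2f\,\nabla f\cdot\tfrac{\nabla W_\tau}{W_\tau}\le|\nabla f|^2+f^2\tfrac{|\nabla W_\tau|^2}{W_\tau^2}$, the ``bad'' quadratic term in $\nabla W_\tau$ cancels and \eqref{e_ibp} collapses to the clean estimate
\[
\int_{\Omega_i}\frac{-L_\tau W_\tau}{W_\tau}\,f^2\,d\nu_i^\tau\;\le\;\tau\,\E_{\nu_i^\tau}|\nabla f|^2 .
\]

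With this in hand the rest is bookkeeping. Inserting the Lyapunov inequality $\tfrac{-L_\tau W_\tau}{W_\tau}\ge\lambda-b\mathbbm{1}_{U_i}$ from \eqref{e_lya} gives
\[
\lambda\int_{\Omega_i}f^2\,d\nu_i^\tau\;\le\;\tau\,\E_{\nu_i^\tau}|\nabla f|^2+b\int_{U_i}f^2\,d\nu_i^\tau .
\]
Since gradients are insensitive to additive constants, I would apply this with $f$ replaced by $f-c$, choosing $c:=\E_{\nu_i^\tau|_{U_i}}(f)$, so that on one hand $\var_{\nu_i^\tau}(f)\le\int_{\Omega_i}(f-c)^2\,d\nu_i^\tau$, and on the other hand, by the choice of $c$ together with the hypothesis PI($\rho_{U_i}$),
\[
\int_{U_i}(f-c)^2\,d\nu_i^\tau=\nu_i^\tau(U_i)\,\var_{\nu_i^\tau|_{U_i}}(f)\le\frac{1}{\rho_{U_i}}\int_{U_i}|\nabla f|^2\,d\nu_i^\tau\le\frac{1}{\rho_{U_i}}\,\E_{\nu_i^\tau}|\nabla f|^2 .
\]
Combining the last three displays yields $\var_{\nu_i^\tau}(f)\le\bigl(\tfrac{\tau}{\lambda}+\tfrac{b}{\lambda\rho_{U_i}}\bigr)\E_{\nu_i^\tau}|\nabla f|^2$, which is exactly PI($\rho$) with $\rho^{-1}\le\frac{b}{\lambda}\rho_{U_i}^{-1}+\frac{1}{\lambda}\tau$. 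I would first run this for bounded Lipschitz $f$ and then pass to general smooth $f$ by a standard truncation and monotone-convergence argument.

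The conceptual heart of the proof is the exact cancellation in the first display, and there is essentially no obstacle there. The one genuinely delicate point is the legitimacy of $g=f^2/W_\tau$ as a test function in the Neumann identity \eqref{e_ibp} when $\Omega_i$ is unbounded and $W_\tau$ is allowed to grow: this is precisely what the Neumann property built into Definition \ref{d_lya}(ii) is meant to absorb, and making the approximation/integrability step rigorous uniformly in the truncation is the part I would expect to require the most care, even though it is routine. The only other thing to watch is consistent tracking of the factor $\tau$ from $L_\tau=\tau\Delta-\nabla H\cdot\nabla$ so that the two terms in the final bound for $\rho^{-1}$ carry the stated powers of $\tau$.
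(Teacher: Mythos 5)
Your proof is correct and is exactly the Bakry--Cattiaux--Guillin argument that \cite{MS14} use for their Theorem~3.8, to which the paper delegates this lemma rather than re-deriving it: test the Lyapunov bound against $f^2/W_\tau$, cancel the $|\nabla W_\tau|^2$ term via Young's inequality, and then recenter $f$ at its $U_i$-mean to invoke PI($\rho_{U_i}$). Note in passing that your bookkeeping only produces the stated $\tfrac{1}{\lambda}\tau$ term because you correctly kept the factor $\tau$ in the Neumann integration-by-parts identity for $L_\tau=\tau\Delta-\nabla H\cdot\nabla$, namely $\int_{\Omega_i}(-L_\tau W_\tau)\,g\,d\nu_i^\tau=\tau\int_{\Omega_i}\nabla g\cdot\nabla W_\tau\,d\nu_i^\tau$, a factor of $\tau$ that the paper's displayed formula \eqref{e_ibp} inadvertently omits.
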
	
We choose $U_i$ to be a ball centered at the local minimum $m_i$ with a small, fixed radius $R_0$ such that $H$ is strongly convex on $U_i$. Then the Bakry-Emery criterion provides the following result.
\begin{lemma}[PI for truncated Gibbs measure, Lemma 3.6 in \cite{MS14}]  The measures $\nu^\tau_i|_{U_i}$ satisfy PI($\rho_{U_i}$) with $\rho_{U_i}^{-1} = O(\tau)$.  
\end{lemma}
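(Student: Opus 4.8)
The plan is to invoke the classical Bakry-\'Emery criterion, which applies here because $U_i$ is convex and $H$ is uniformly convex on it. Recall that $U_i = B_{R_0}(m_i)$ was chosen (in the paragraph preceding the statement) as a ball of small fixed radius on which $H$ is strongly convex; quantitatively, since $H$ is a Morse function, $\nabla^2 H(m_i)$ is positive definite, and since $H \in C^3$, continuity of $x \mapsto \nabla^2 H(x)$ provides constants $R_0 > 0$ and $\kappa > 0$, both independent of $\tau$, such that $\nabla^2 H(x) \geq \kappa\, \Id$ for all $x \in \overline{U_i}$. As there are only finitely many local minima, $R_0$ and $\kappa$ may be taken common to all $i \in [N]$.

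Writing $\nu^\tau_i|_{U_i} = \frac{1}{Z}\, e^{-V}\, \mathbbm{1}_{U_i}\, dx$ with $V = H/\tau$, we have $\nabla^2 V = \tau^{-1} \nabla^2 H \geq (\kappa/\tau)\, \Id$ on $U_i$. Since $U_i$ is a ball, hence convex, the Bakry-\'Emery criterion (equivalently, the Brascamp-Lieb inequality) for uniformly log-concave measures on a convex domain with reflecting boundary yields
\[
\var_{\nu^\tau_i|_{U_i}}(f) \leq \frac{\tau}{\kappa}\, \E_{\nu^\tau_i|_{U_i}} |\nabla f|^2
\]
for all smooth $f : \mathbb{R}^n \to \mathbb{R}$; that is, $\nu^\tau_i|_{U_i}$ satisfies PI($\rho_{U_i}$) with $\rho_{U_i} = \kappa/\tau$, and therefore $\rho_{U_i}^{-1} = \tau/\kappa = O(\tau)$, uniformly over $\tau \leq \tau_2$ and over $i \in [N]$.

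The only delicate point is the boundary $\partial U_i$, which enters the integration-by-parts ($\Gamma_2$/Reilly) identity behind the Bakry-\'Emery argument: convexity of the ball makes the associated boundary term have a favorable sign, so the argument closes. Alternatively — and perhaps most cleanly — one avoids boundaries entirely by approximation: choose convex $\psi_k \in C^\infty(\mathbb{R}^n)$ vanishing on $B_{R_0 - 1/k}(m_i)$ and tending to $+\infty$ outside $U_i$, so that $\nabla^2(V + \psi_k) \geq (\kappa/\tau)\, \Id$ on all of $\mathbb{R}^n$; the standard boundaryless Bakry-\'Emery criterion then gives PI($\kappa/\tau$) for $\frac{1}{Z_k} e^{-(V+\psi_k)}\, dx$ uniformly in $k$, and letting $k \to \infty$ (these measures converge to $\nu^\tau_i|_{U_i}$ in total variation, and the Dirichlet energies pass to the limit) yields the claim. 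I expect this limiting step — or, equivalently, quoting the correct Neumann version of the Bakry-\'Emery criterion — to be the only genuine, and still routine, point requiring care.
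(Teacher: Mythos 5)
Your proof is correct and follows the same route the paper takes: the paper simply invokes the Bakry--\'Emery criterion on the convex ball $U_i$ where $H$ is strongly convex and cites \cite[Lemma 3.6]{MS14}, which is exactly the argument you spell out. Your elaboration of the Neumann-boundary point (and the alternative exhaustion argument) is fine but not a departure; the computation $\nabla^2(H/\tau) \geq (\kappa/\tau)\,\Id$ yielding $\rho_{U_i}^{-1} = \tau/\kappa = O(\tau)$ matches the claimed bound.
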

In \cite{MS14}, the candidate for the Lyapunov function is $W_\tau = \exp\bigl({\frac{H}{2\tau}}\bigr)$, so that (see \cite[equation (3.9)]{MS14})
\[
\frac{L_\tau W_\tau}{W_\tau} = \frac{1}{2} \Delta H(x) - \frac{1}{4\tau} |\nabla H(x)|^2.
\]
In order to satisfy the condition \eqref{e_lya}, the Hamiltonian $H$ was replaced by a perturbed one $H_\tau$ such that $\lVert H- H_\tau \rVert_\infty = O(\tau)$. In order to satisfy the condition \eqref{e_ibp}, $\Omega_i$ is then chosen to be a basin of attraction with respect to the gradient flow of this perturbed Hamiltonian $H_\tau$. Consequently, the local PI was first deduced for the perturbed Gibbs measure $\frac{1}{Z} \exp{\frac{H_\tau}{2\tau}}$ on $\Omega_i$, which then implies PI for the original measure via Holley-Stroock perturbation principle. One side effect of this approach is that the region $\Omega_i$ depends on the temperature $\tau$, which is unsuitable in our setting with two different temperatures. 

We modify this approach as follows: instead of perturbing the Hamiltonian in the Gibbs measure, we only perturb the Hamiltonian in the Lyapunov function. Given $\tau_2 = \varepsilon$ small enough, we will choose a perturbation $H_\varepsilon = H + V_\varepsilon$ where $V_\varepsilon = O(\varepsilon)$, and choose $\Omega_i$ to be the basin of attraction with respect to the gradient flow of $H_\varepsilon$. Then, for every $\tau \leq \varepsilon$, we choose the Lyapunov function to be $W_\tau = \exp{\frac{H_\varepsilon}{2\tau}}$. Then \eqref{e_ibp} is satisfied by \cite[Theorem B.1]{MS14} and
\begin{align*}
\frac{L_\tau W_\tau}{W_\tau} &= -\frac{\nabla H\cdot \nabla H_\varepsilon}{2\tau} + \tau \left( \frac{\Delta H_\varepsilon}{2\tau} + \frac{|\nabla H_\varepsilon|^2}{4\tau^2} \right) \\
&= \frac{1}{2} \Delta H_\varepsilon - \frac{1}{4\tau} \bigl(|\nabla H|^2 - |\nabla V_\varepsilon|^2\bigr) \leq \frac{L_\varepsilon W_\varepsilon }{W_\varepsilon},
\end{align*}
where the last inequality holds as long as $|\nabla V_\varepsilon| \leq |\nabla H|$. Then once \eqref{e_lya} is verified for $\tau = \varepsilon$, PI for $\nu_i^\tau$ follows for every $\tau \leq \varepsilon$ on the same region $\Omega_i$. 

It turns out the same perturbation used in \cite{MS14} works here. Let $\mathcal{S}$ be the set of critical points of $H$ and $\mathcal{M} = \{m_1, m_2, \ldots,m_N \}$ be the set of local minima of $H$. 

\begin{lemma}[$\varepsilon$-modification]\label{p_epsilon} Given a function $H$ satisfying Assumption \ref{assumeenv}, there exist constants $\varepsilon_0, \lambda_0, a, C \in (0, \infty)$ and a family of $C^3$ functions $\{V_\varepsilon\}_{0< \varepsilon < \varepsilon_0}$ such that for $H_\varepsilon := H + V_\varepsilon$ it holds
	\begin{enumerate}	
		\item[(i)] $V_\varepsilon$ is supported on $\bigcup_{s\in \mathcal{S} \setminus \mathcal{M}} B_{a\sqrt{\varepsilon}}(s)$ and $|V_\varepsilon (x)| \leq C \varepsilon$ for all $x$. 
		\item[(ii)] Lyapunov-type condition: $|\nabla V_\varepsilon(x)| \leq |\nabla H(x)|$ for all $x$ and \begin{equation*}
		\frac{1}{2}\Delta H_\varepsilon - \frac{1}{4\varepsilon} (|\nabla H|^2 - |\nabla V_\varepsilon|^2) \leq -\lambda_0  \, \, \, \text{ for all }  x \notin  \bigcup_{m\in \mathcal{M}} B_{a\sqrt\varepsilon}(m). 
		\end{equation*}
	\end{enumerate} 
\end{lemma}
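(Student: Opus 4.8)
The plan is to construct $V_\varepsilon$ as a bump supported near the non-minimum critical points, essentially recovering the construction in \cite[Section 3]{MS14} but only perturbing the Lyapunov function rather than the Gibbs measure. The starting point is the computation already displayed in the excerpt: for $W_\tau = \exp(H_\varepsilon/2\tau)$ one has $\frac{L_\varepsilon W_\varepsilon}{W_\varepsilon} = \frac12 \Delta H_\varepsilon - \frac{1}{4\varepsilon}(|\nabla H|^2 - |\nabla V_\varepsilon|^2)$, so condition (ii) is exactly the statement that this quantity is $\leq -\lambda_0$ outside the $a\sqrt\varepsilon$-balls around the minima. Away from all critical points, $|\nabla H|$ is bounded below (by continuity on compacts and by \eqref{assumegradsuperlinear} at infinity), so the $-\frac{1}{4\varepsilon}|\nabla H|^2$ term dominates everything for $\varepsilon$ small as long as $|\nabla V_\varepsilon|\le |\nabla H|$ and $\Delta H_\varepsilon$ does not grow too fast — here one uses the growth condition \eqref{assumegradlaplace}, $\liminf(|\nabla H|^2 - \Delta H) \geq -K_H$, to control $\Delta H$ by $|\nabla H|^2$ at infinity. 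Thus the only places where the unperturbed expression $\frac12\Delta H - \frac1{4\varepsilon}|\nabla H|^2$ can fail to be $\le -\lambda_0$ are small neighborhoods of the critical points, where $|\nabla H|$ is small.

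Near a minimum $m\in\mathcal M$ we simply do not need the estimate (the conclusion excludes $\bigcup_{m} B_{a\sqrt\varepsilon}(m)$), and by the Morse condition \eqref{emorse} the ball of radius $a\sqrt\varepsilon$ already captures the region where $|\nabla H|$ is too small to beat $\Delta H(m) > 0$; choosing $a$ large enough (depending only on $C_H$) makes $\frac12 \Delta H - \frac1{4\varepsilon}|\nabla H|^2 \le -\lambda_0$ hold on the annulus $B_{r_0}(m)\setminus B_{a\sqrt\varepsilon}(m)$ for a fixed small $r_0$, since there $|\nabla H| \gtrsim |x-m|/C_H \gtrsim a\sqrt\varepsilon$. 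Near a saddle or higher-index critical point $s \in \mathcal S\setminus\mathcal M$, however, the unperturbed quantity is genuinely positive (because $\Delta H(s)$ may be positive while $\nabla H(s) = 0$), so we insert $V_\varepsilon$. On $B_{a\sqrt\varepsilon}(s)$ set $V_\varepsilon(x) = -\varepsilon\, \chi\!\big(|x-s|/\sqrt\varepsilon\big)$ for a fixed smooth cutoff $\chi$ equal to a negative constant near $0$; then $\Delta V_\varepsilon = O(1)$ can be made as negative as we like near $s$ (scaling: second derivatives of $x\mapsto \varepsilon\chi(x/\sqrt\varepsilon)$ are $O(1)$), so $\frac12\Delta H_\varepsilon$ becomes $\le -\lambda_0$ there, while $|\nabla V_\varepsilon| = O(\sqrt\varepsilon)$, which is $\le |\nabla H|$ on the annulus where $|\nabla H|\gtrsim\sqrt\varepsilon$ and on the inner ball where we need only the $\Delta H_\varepsilon$ term, and $\|V_\varepsilon\|_\infty = O(\varepsilon)$ gives (i). One checks the two conditions patch together: the balls $B_{a\sqrt\varepsilon}(s)$ are disjoint for $\varepsilon$ small (critical points are discrete and finite), so $V_\varepsilon$ is well-defined and $C^3$ (take $\chi\in C^3$), supported as claimed.

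The main obstacle is the bookkeeping at the junction between the ``inner'' scale $|x-s| \sim \sqrt\varepsilon$, where the perturbation does the work, and the ``outer'' scale where the unperturbed $-\frac1{4\varepsilon}|\nabla H|^2$ term does the work: one must choose the cutoff profile $\chi$ and the radius $a$ so that on the overlap annulus $\{c_1\sqrt\varepsilon \le |x-s| \le a\sqrt\varepsilon\}$ \emph{both} the condition $|\nabla V_\varepsilon|\le |\nabla H|$ and the negativity $\frac12\Delta H_\varepsilon - \frac1{4\varepsilon}(|\nabla H|^2-|\nabla V_\varepsilon|^2) \le -\lambda_0$ hold simultaneously — a Morse-lemma normal-form computation for $H$ near $s$ (writing $H(x) = H(s) + \frac12\langle \nabla^2 H(s)(x-s), x-s\rangle + O(|x-s|^3)$) makes this transparent, since then $|\nabla H(x)|^2 \asymp |x-s|^2$ and $\Delta H(x) = \Delta H(s) + O(|x-s|)$, reducing everything to an explicit one-variable inequality in $t = |x-s|/\sqrt\varepsilon$. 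Since this is precisely the construction carried out in \cite[Lemma 3.9 and its proof]{MS14} and the only change is that $H_\varepsilon$ is used in the Lyapunov function rather than in the reference measure — which affects none of the estimates in (i), (ii) — I would present the construction and then refer to \cite{MS14} for the detailed verification of the one-variable inequality.
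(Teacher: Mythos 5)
Both the paper and your proposal ultimately defer to \cite[Lemma 3.12]{MS14} for the detailed construction: the paper explicitly omits the proof, stating only that ``the perturbation $V_\varepsilon$ can be taken to be the same one used there.'' Your overall plan --- a bump supported near the non-minimum critical points, with the Lyapunov inequality reduced to a scaled one-variable computation via Morse normal form --- is the right shape. However, the explicit $V_\varepsilon$ you sketch near a critical point $s$ is not correct, and the error is not cosmetic.

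You take $V_\varepsilon(x) = -\varepsilon\chi(|x-s|/\sqrt\varepsilon)$ for $\chi$ ``equal to a negative constant near $0$'' and claim $\Delta V_\varepsilon$ can then be made as negative as we like near $s$. But if $\chi$ is constant near $0$ then $\chi' = \chi'' = 0$ there, so $\Delta V_\varepsilon \equiv 0$ in a neighbourhood of $s$; the scaling argument only gives $\Delta V_\varepsilon = O(1)$, not a sign. Since $\nabla H(s) = 0$ (and hence $\nabla V_\varepsilon(s)$ must also vanish to satisfy $|\nabla V_\varepsilon| \leq |\nabla H|$), condition (ii) at $x = s$ degenerates to $\tfrac{1}{2}\Delta H_\varepsilon(s) \leq -\lambda_0$, which under your choice is $\tfrac{1}{2}\Delta H(s) \leq -\lambda_0$ and fails whenever $\tr\nabla^2 H(s) \geq 0$.

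Correcting the sign alone (taking, say, $\chi(t) \sim ct^2$ near $0$) does not save a \emph{radial} profile in general. The constraint $|\nabla V_\varepsilon| \leq |\nabla H|$ together with $|\nabla H(x)| \approx |\nabla^2 H(s)(x-s)|$ forces the radial quadratic coefficient to satisfy $2c \leq \min_i|\lambda_i(\nabla^2 H(s))|$, hence $\Delta V_\varepsilon(s) = -2nc \geq -n\min_i|\lambda_i|$; meanwhile $\Delta H(s) = \sum_i\lambda_i$ can be far larger than $n\min_i|\lambda_i|$ when the Hessian's eigenvalues are spread out (for $n=2$, $\lambda_1 = -1/C_H$, $\lambda_2 = C_H$ one still has $\Delta H_\varepsilon(s) > 0$ whenever $C_H$ is moderately bigger than $1$). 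The construction that actually works is \emph{anisotropic}: take $V_\varepsilon$ to leading order equal to $-\tfrac{1-\delta}{2}\langle A_s(x-s), (x-s)\rangle$ with $A_s$ the matrix absolute value of $\nabla^2 H(s)$ (eigenvalues $|\lambda_i|$). Then $|\nabla V_\varepsilon| \leq |\nabla H|$ holds at leading order because $(1-\delta)^2 A_s^2 \leq (\nabla^2 H(s))^2$, while $\Delta V_\varepsilon(s) = -(1-\delta)\tr A_s$ and $\tr \nabla^2 H(s) - \tr A_s = -2\sum_{\lambda_i<0}|\lambda_i| \leq -2/C_H < 0$ by the Morse condition, giving the required negativity with room to spare. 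This anisotropy, rather than a radial bump, is the essential feature and should be named if you present a sketch rather than a bare citation. Finally, the remark that using $H_\varepsilon$ in the Lyapunov function rather than in the reference measure ``affects none of the estimates'' is a little glib: the Lyapunov quotient here is $\tfrac12\Delta H_\varepsilon - \tfrac{1}{4\varepsilon}(|\nabla H|^2 - |\nabla V_\varepsilon|^2)$ rather than $\tfrac12\Delta H_\varepsilon - \tfrac{1}{4\varepsilon}|\nabla H_\varepsilon|^2$, and the two differ by $\tfrac{1}{2\varepsilon}\nabla V_\varepsilon\cdot\nabla H_\varepsilon$, which is not sign-definite; the extra hypothesis $|\nabla V_\varepsilon| \leq |\nabla H|$ (needed to make the estimate monotone in $\tau \leq \varepsilon$) is likewise new and must be verified against the [MS14] construction, which is what the paper's ``can be shown by carefully following'' is gesturing at.
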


We omit the proof of Lemma \ref{p_epsilon}. It can be shown by carefully following the proof of \cite[Lemma 3.12]{MS14}; indeed, the perturbation $V_\varepsilon$ can be taken to be the same one used there. It is easy to see that $H_\varepsilon$ has the same local minima as $H$. For each local minimum $m_i$ of $H$, let $\Omega_i$ be the associated basin of attraction w.r.t. the gradient flow defined by the $\tau_2$-modified potential $H_{\tau_2}$, that is
\[
\Omega_i := \left\{y \in \mathbb{R}^n: \lim_{t \rightarrow \infty} y_t = m_i, \, \frac{dy_t}{dt} = -\nabla H_{\tau_2}(y_t), \, y_0 = y \right\}.
\]
Then $(\Omega_i)_{i=1}^N$ is an admissible partition in the sense of Definition \ref{d_admissable_single}. We omit the proof of this fact, which can be shown by slightly modifying the proof of \cite[Lemma 3.12]{MS14}. The preceding discussion shows $\nu_i^{\tau}$ defined on $\Omega_i$ by \eqref{e_spGm_local} satisfies PI$(\rho)$ with $\rho^{-1} = O(\tau)$ for all $\tau \leq \tau_2$. \medskip

Equipped with the Poincar\'e inequality for $\nu_i^{\tau}$, the log-Sobolev inequality for $\nu_i^{\tau}$ is now a simple consequence of the following criterion from \cite{MS14}.

\begin{lemma}[Lyapunov condition for local LSI, Theorem 3.15 in \cite{MS14}] Assume that
	\begin{enumerate}
		\item[(i)] There exists a smooth function $W_\tau: \Omega_i \rightarrow (0, \infty)$ and constants $\lambda, b > 0$ such that for $L_\tau:= \tau \Delta - \nabla H \cdot \nabla$
		\begin{equation*} \frac{L_\tau W_\tau}{W_\tau} \leq -\lambda |x|^2 + b \quad \forall x \in \Omega_i.
		\end{equation*}
		\item[(ii)] $\nabla^2 H \geq -K_H$ for some $K_H > 0$ and $\nu_i^\tau$ satisfies PI$(\rho)$.
		\item[(iii)] $W_\tau$ satisfies Neumann boundary condition on $\Omega_i$ (see \eqref{e_ibp}).
	\end{enumerate}
	Then $\nu_i^\tau$ satisfies LSI$(\alpha)$ with
	\begin{equation*}
	\alpha^{-1} \leq 2\sqrt{\frac{\tau}{\lambda} \left(\frac{1}{2} + \frac{b + \lambda \nu_i^\tau(|x|^2)}{\rho \tau}\right)} + \frac{K_H}{\lambda} \left(\frac{1}{2} + \frac{b + \lambda \nu_i^\tau(|x|^2)}{\rho \tau}\right) + \frac{2}{\rho},
	\end{equation*}
	where $\nu_i^\tau(|x|^2)$ denotes the second moment of $\nu_i^\tau$.
\end{lemma}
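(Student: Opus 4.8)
The plan is to deduce the log-Sobolev inequality for $\nu_i^\tau$ in three stages: first convert the \emph{quadratic} Lyapunov drift bound (i) into a Gaussian-type confinement (moment) estimate for $\nu_i^\tau$; then combine this confinement with the Hessian lower bound in (ii) to obtain a \emph{defective} log-Sobolev inequality; and finally tighten the defective inequality into a genuine LSI using the Poincar\'e inequality, also from (ii). This follows the usual strategy of extracting functional inequalities from Lyapunov functions, as in \cite{MS14} and the references therein.

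\textbf{Step 1 (moment estimate).} Since $\nu_i^\tau$ is reversible for $L_\tau$ and the Neumann condition (iii) is exactly the identity $\int_{\Omega_i}(-L_\tau W_\tau)\,h\,d\nu_i^\tau=\tau\int_{\Omega_i}\nabla h\cdot\nabla W_\tau\,d\nu_i^\tau$, I would apply it with $h=f^2/W_\tau$ and use the pointwise inequality $\nabla W_\tau\cdot\nabla(f^2/W_\tau)\le|\nabla f|^2$ (complete the square: $2a\cdot u-|u|^2\le|a|^2$ with $a=\nabla f$, $u=\tfrac{f}{W_\tau}\nabla W_\tau$) together with (i) to obtain, for all smooth $f$,
\[
\lambda\int_{\Omega_i}|x|^2 f^2\,d\nu_i^\tau\;\le\;\tau\int_{\Omega_i}|\nabla f|^2\,d\nu_i^\tau+b\int_{\Omega_i}f^2\,d\nu_i^\tau .
\]
Testing with $f=e^{\delta|x|^2/2}$ then gives $\int e^{\delta|x|^2}\,d\nu_i^\tau<\infty$ for $\delta$ small; this sub-Gaussian integrability, a consequence of the drift bound being quadratic in $|x|$, is what makes an LSI (rather than merely a PI) available.

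\textbf{Step 2 (defective LSI).} The bound $\nabla^2H\ge -K_H$ yields the Bakry--\'Emery curvature bound $\Gamma_2\ge -K_H\Gamma$ for $L_\tau$, hence an HWI-type inequality
\[
\ent_{\nu_i^\tau}(f^2)\;\le\;2\,W_2\bigl(f^2\nu_i^\tau,\nu_i^\tau\bigr)\sqrt{\mathcal{I}_{\nu_i^\tau}(f^2)}\;+\;\frac{K_H}{2\tau}\,W_2\bigl(f^2\nu_i^\tau,\nu_i^\tau\bigr)^2 .
\]
Bounding the transport cost from above by second moments, $W_2(f^2\nu_i^\tau,\nu_i^\tau)^2\le 2\int|x|^2f^2\,d\nu_i^\tau+2\,\nu_i^\tau(|x|^2)$, and inserting the moment estimate from Step 1 turns this into a defective LSI whose constant is an explicit function of $\lambda,b,\tau$ and of the ambient second moment $\nu_i^\tau(|x|^2)$. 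This is precisely where the combination $\tfrac12+\tfrac{b+\lambda\nu_i^\tau(|x|^2)}{\rho\tau}$ and the two summands of the form $2\sqrt{\tfrac{\tau}{\lambda}(\cdot)}$ and $\tfrac{K_H}{\lambda}(\cdot)$ will come from.

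\textbf{Step 3 (tightening) and the main obstacle.} Apply the defective LSI to $f-\nu_i^\tau(f)$ and use Rothaus' lemma $\ent_{\nu_i^\tau}(f^2)\le\ent_{\nu_i^\tau}\bigl((f-\nu_i^\tau(f))^2\bigr)+2\var_{\nu_i^\tau}(f)$; the additive defect term is absorbed by the Poincar\'e inequality PI$(\rho)$ from (ii), and carrying the constants through gives the claimed bound, with the residual $\tfrac2\rho$ coming from the Rothaus correction. I expect Step 2 to be the main obstacle: one must justify the HWI/curvature argument on the basin of attraction $\Omega_i$, which is not convex, with Neumann boundary conditions — i.e.\ control the displacement interpolation (or the underlying semigroup computation) on a non-convex domain — and then keep the constants sharp enough that the sub-Gaussian confinement of Step 1 beats the curvature defect $K_H$ and the Poincar\'e tightening reproduces exactly the prefactors $2\sqrt{\tfrac{\tau}{\lambda}A}$, $\tfrac{K_H}{\lambda}A$, $\tfrac2\rho$ with $A=\tfrac12+\tfrac{b+\lambda\nu_i^\tau(|x|^2)}{\rho\tau}$. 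The remaining manipulations are routine.
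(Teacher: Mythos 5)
The paper does not itself prove this lemma---it is quoted from \cite{MS14} (Theorem 3.15), which is a local, two-temperature-compatible version of a criterion of Cattiaux, Guillin, and Wu. Your sketch reconstructs precisely that route: the Lyapunov drift plus the Neumann identity \eqref{e_ibp} give the quadratic moment bound (your pointwise computation $\nabla W_\tau\cdot\nabla(f^2/W_\tau)\le|\nabla f|^2$ is the right way to extract $\lambda\int|x|^2f^2\,d\nu_i^\tau\le \tau\int|\nabla f|^2\,d\nu_i^\tau+b\int f^2\,d\nu_i^\tau$); the moment bound controls $W_2(f^2\nu_i^\tau,\nu_i^\tau)$ and, together with $\nabla^2 H\ge-K_H$, feeds into an HWI-type estimate to give a defective LSI; and Rothaus' lemma together with PI$(\rho)$ removes the defect, contributing the $\tfrac{2}{\rho}$. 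The constant structure you identify ($2\sqrt{\tfrac{\tau}{\lambda}A}$ from the linear-in-$W_2$ term, $\tfrac{K_H}{\lambda}A$ from the quadratic one, $A=\tfrac12+\tfrac{b+\lambda\nu_i^\tau(|x|^2)}{\rho\tau}$) is also consistent with that derivation.

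The point you flag as ``the main obstacle''---justifying the HWI-type step on the non-convex basin $\Omega_i$ with Neumann boundary---is the genuine crux, and your proposal leaves it as a sketch. This is exactly the step that does not reduce to a clean citation of the whole-space HWI; it has to be carried out at the level of the Dirichlet form on $\Omega_i$. Notice that hypothesis (iii) is deliberately phrased as the integration-by-parts identity \eqref{e_ibp} rather than as a geometric (e.g.\ convexity) assumption, precisely so that the boundary enters the argument only through that identity, as it does in your Step~1. A fully detailed Step~2 would need to route the transport estimate through the same mechanism (or through a suitable approximation of $\nu_i^\tau$ by globally defined measures), not invoke a geodesic HWI on $\Omega_i$ directly. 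So your proposal faithfully reproduces the architecture and the constants of the cited proof and correctly locates the hard step, but it does not close that step.
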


Choosing $W_\tau$ to be the same Lyapunov function we chose for the PI, it is straightforward to check that, under Assumption \ref{assumeenvLSI}, the conditions (i)-(iii) holds and the second moment $\nu_i^\tau(|x|^2)$ is uniformly bounded. We omit the proofs, which are virtually identical to their counterparts in \cite{MS14} (see Lemmas 3.17-3.19). Finally, $\rho^{-1} = O(\tau)$ yields $\alpha^{-1} = O(1)$. 

\subsection{Proof of Lemma \ref{p_meandiff_prod}}
\label{sc:34}

In order to prove Lemma \ref{p_meandiff_prod}, we observe that the local Gibbs measures $\nu_i^\tau$ are close to a class of truncated Gaussian measures in the sense of mean-difference, see \cite[Lemma 4.6]{MS14}. 

\begin{definition}[Truncated Gaussian measure]\label{d_truncated_Gaussian} Given $m \in \mathbb{R}^n$, $\Sigma$ a symmetric positive definite $n\times n$ matrix, $R \geq 1$, consider the ellipsoid
	\[
	E_i^\tau := \{x\in \mathbb{R}^n: (x-m)\cdot \Sigma^{-1} (x-m) \leq R^2 \tau \}.
	\]
	The truncated Gaussian measure $\gamma^{\tau}$ at temperature $\tau$ with mean $m$ and covariance $\Sigma$ on scale $R$ is defined to be
	\begin{align}
	\gamma^{\tau}(x) :=  \frac{\exp{\left(-\frac{1}{2\tau} (x-m)\cdot \Sigma^{-1} (x-m) \right)}}{Z_R\sqrt{\tau}^n \sqrt{\det \Sigma}} \mathbbm{1}_{E^\tau},
	\end{align}
	where $Z_R := \int_{B_R(0)} \exp{(-|x|^2/2)} dx = \sqrt{2\pi}^n (1 - O(e^{-R^2}R^{n-2}))$.
\end{definition}

\begin{lemma}[Approximation by truncated Gaussian]\label{p_meandiff_approx} For $\tau \leq \tau_2$, let $\gamma_i^\tau$ be the truncated Gaussian measure at temperature $\tau$ with mean $m_i$ and covariance $\Sigma_i = (\nabla H^2 (m_i))^{-1}$ on scale $R(\tau_2) = |\log \tau_2|^{1/2}$. Then
	\begin{equation}\label{e_pdf_approx}
	\frac{d\gamma_i^\tau}{d\nu_i^\tau}(x) = 1 + \omega(\tau_2),
	\end{equation}
	uniformly in the support of $\gamma_i^\tau$, and for any smooth function $f: \mathbb{R}^n \rightarrow \mathbb{R}$
	\begin{align}\label{e_meandiff_approx}  
	(\E_{\nu_i^\tau} f - \E_{\gamma_i^\tau} f)^2 \leq \var_{\nu_i^\tau} \left(\frac{d\gamma_i^\tau}{d\nu_i^\tau}\right) \var_{\nu_i^\tau}(f)\leq \omega(\tau_2) \tau \E_{\nu_i^\tau} |\nabla f|^2.
	\end{align}
	where $\omega(\tau_2):= O(\sqrt{\tau_2} |\log \tau_2|^{3/2})$.
\end{lemma}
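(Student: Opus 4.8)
The plan is to follow the blueprint of \cite[Lemma 4.6]{MS14}: first establish the pointwise density comparison \eqref{e_pdf_approx}, and then deduce the mean-difference bound \eqref{e_meandiff_approx} from it by a Cauchy--Schwarz (covariance) estimate combined with the local Poincar\'e inequality for $\nu_i^\tau$ from Lemma \ref{p_PI_spGm}.

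For \eqref{e_pdf_approx}, note first that since $\Sigma_i=(\nabla^2 H(m_i))^{-1}$ has eigenvalues bounded above and below by the Morse condition, the ellipsoid $E_i^\tau$ has diameter $O(\sqrt{\tau}\,R(\tau_2))=O(\sqrt{\tau|\ln\tau_2|})$; in particular $E_i^\tau\subset\Omega_i$ for $\tau_2$ small, so $d\gamma_i^\tau/d\nu_i^\tau$ is well defined. Since $m_i$ is a critical point and $H\in C^3$, a Taylor expansion gives, uniformly for $x\in E_i^\tau$,
\[
\frac{H(x)-H(m_i)}{\tau}=\frac{1}{2\tau}(x-m_i)\cdot\nabla^2 H(m_i)(x-m_i)+O\!\left(\frac{|x-m_i|^3}{\tau}\right),
\]
and on $E_i^\tau$ the remainder is $O\bigl((\tau|\ln\tau_2|)^{3/2}/\tau\bigr)=O\bigl(\sqrt{\tau}\,|\ln\tau_2|^{3/2}\bigr)\le\omega(\tau_2)$ because $\tau\le\tau_2$. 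Exponentiating, $e^{-(H(x)-H(m_i))/\tau}=(1+\omega(\tau_2))\,e^{-\frac{1}{2\tau}(x-m_i)\cdot\Sigma_i^{-1}(x-m_i)}$ on $E_i^\tau$. Integrating this over $E_i^\tau$ and using the Gaussian identity $\int_{E_i^\tau}e^{-\frac{1}{2\tau}(x-m_i)\cdot\Sigma_i^{-1}(x-m_i)}\,dx=\tau^{n/2}\sqrt{\det\Sigma_i}\,Z_R$ with $Z_R=(2\pi)^{n/2}(1+\omega(\tau_2))$ (Definition \ref{d_truncated_Gaussian}; the Gaussian mass outside the truncation radius $R(\tau_2)=|\ln\tau_2|^{1/2}$ is $\omega(\tau_2)$), one gets $\int_{E_i^\tau}e^{-(H-H(m_i))/\tau}\,dx=(1+\omega(\tau_2))\,(2\pi\tau)^{n/2}/\sqrt{\det\nabla^2 H(m_i)}$. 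Comparing with the admissible-partition asymptotics \eqref{e_spGM_admissible} --- whose error $O(\sqrt{\tau}|\ln\tau|^{3/2})$ is itself dominated by $\omega(\tau_2)$ for $\tau\le\tau_2$, since $t\mapsto\sqrt t\,|\ln t|^{3/2}$ is increasing near $0$ --- shows $\int_{\Omega_i\setminus E_i^\tau}e^{-(H-H(m_i))/\tau}\,dx=\omega(\tau_2)\int_{\Omega_i}e^{-(H-H(m_i))/\tau}\,dx$, i.e. $\nu_i^\tau(E_i^\tau)=1-\omega(\tau_2)$. Dividing $\gamma_i^\tau(x)$ by $\nu_i^\tau(x)$ on $E_i^\tau$, the Gaussian weight matches the Boltzmann weight $e^{-(H(x)-H(m_i))/\tau}$ up to $1+\omega(\tau_2)$ (Taylor comparison) and the two normalizing constants match up to $1+\omega(\tau_2)$ (the Gaussian/Laplace comparison just made), which gives \eqref{e_pdf_approx}.

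For \eqref{e_meandiff_approx}, set $g:=d\gamma_i^\tau/d\nu_i^\tau$. Since $\gamma_i^\tau$ and $\nu_i^\tau$ are probability measures, $\E_{\nu_i^\tau}g=1$ and $\E_{\gamma_i^\tau}f=\E_{\nu_i^\tau}[fg]$, so
\[
\E_{\nu_i^\tau}f-\E_{\gamma_i^\tau}f=\E_{\nu_i^\tau}\bigl[(f-\E_{\nu_i^\tau}f)(1-g)\bigr],
\]
and Cauchy--Schwarz gives the first inequality $(\E_{\nu_i^\tau}f-\E_{\gamma_i^\tau}f)^2\le\var_{\nu_i^\tau}(g)\,\var_{\nu_i^\tau}(f)$. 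By \eqref{e_pdf_approx}, $g=1+\omega(\tau_2)$ on $E_i^\tau$ and $g=0$ on $\Omega_i\setminus E_i^\tau$, hence $\E_{\nu_i^\tau}[g^2]=(1+\omega(\tau_2))^2\,\nu_i^\tau(E_i^\tau)=1+\omega(\tau_2)$ and therefore $\var_{\nu_i^\tau}(g)=\omega(\tau_2)$. Finally, Lemma \ref{p_PI_spGm} gives $\var_{\nu_i^\tau}(f)\le O(\tau)\,\E_{\nu_i^\tau}|\nabla f|^2$, and multiplying the two bounds yields $\var_{\nu_i^\tau}(g)\,\var_{\nu_i^\tau}(f)\le\omega(\tau_2)\,\tau\,\E_{\nu_i^\tau}|\nabla f|^2$, the second inequality.

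The main obstacle is the error bookkeeping in the first step: one has to verify that the cubic Taylor remainder on the shrinking ellipsoid, the Gaussian tail lost by truncating at radius $R(\tau_2)\sqrt{\tau}$, and the error in the Laplace asymptotics \eqref{e_spGM_admissible} are all of size $\omega(\tau_2)$ \emph{uniformly over} $\tau\le\tau_2$; it is this uniformity that dictates the truncation scale $R(\tau_2)=|\ln\tau_2|^{1/2}$ and uses the monotonicity of $t\mapsto\sqrt t\,|\ln t|^{3/2}$ near $0$. No single step is deep, but they must all be calibrated to the common precision $1+\omega(\tau_2)$; once \eqref{e_pdf_approx} is in hand, \eqref{e_meandiff_approx} follows immediately.
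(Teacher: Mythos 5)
The paper does not actually spell out a proof of this lemma --- it states that the argument is ``the same as [MS14, Lemma~4.6] with only minor changes'' --- so there is no in-paper proof to compare against line by line. Your reconstruction is correct and, more importantly, it identifies and handles precisely the ``minor changes'' that make the two-temperature version nontrivial: the truncation radius $R(\tau_2)=|\ln\tau_2|^{1/2}$ is pinned to the higher temperature $\tau_2$ only, and all three error sources (cubic Taylor remainder on the shrinking ellipsoid, the Gaussian tail past radius $R(\tau_2)$, and the Laplace error $O(\sqrt\tau|\ln\tau|^{3/2})$ from \eqref{e_spGM_admissible}) must be shown to be $\omega(\tau_2)$ \emph{uniformly} over $\tau\le\tau_2$. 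Your use of the monotonicity of $t\mapsto\sqrt t\,|\ln t|^{3/2}$ for small $t$ is exactly the observation needed to absorb $\omega(\tau)$ into $\omega(\tau_2)$, and the derivation of \eqref{e_meandiff_approx} from \eqref{e_pdf_approx} via the covariance identity, Cauchy--Schwarz, $\var_{\nu_i^\tau}(d\gamma_i^\tau/d\nu_i^\tau)=\omega(\tau_2)$, and the local Poincar\'e inequality with constant $O(\tau)$ (Lemma~\ref{p_PI_spGm}) is the standard chain used in \cite{MS14}.

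One small bookkeeping caveat, not a gap in your reasoning but worth being aware of: Definition~\ref{d_truncated_Gaussian} as printed gives the Gaussian truncation error as $O(e^{-R^2}R^{n-2})$, which with $R^2=|\ln\tau_2|$ evaluates to $O(\tau_2\,|\ln\tau_2|^{(n-2)/2})$ and is comfortably $o(\omega(\tau_2))$. The standard radial tail estimate actually yields $O(e^{-R^2/2}R^{n-2})=O(\sqrt{\tau_2}\,|\ln\tau_2|^{(n-2)/2})$, which for $n>5$ has a larger power of $|\ln\tau_2|$ than the stated $\omega(\tau_2)=O(\sqrt{\tau_2}\,|\ln\tau_2|^{3/2})$. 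This discrepancy is inherited from the paper's own definition (and affects only the polylogarithmic prefactor in $\omega$, not any conclusion drawn from the lemma), so it does not count against your proposal; but if one wanted to be scrupulous one would either enlarge $R(\tau_2)$ slightly or redefine $\omega$ as $O(\sqrt{\tau_2}\,|\ln\tau_2|^{\max(3,n-2)/2})$.
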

We omit the proof of Lemma \ref{p_meandiff_approx}, which is the same as \cite[Lemma 4.6]{MS14} with only minor changes. 
\begin{corollary}\label{p_meandiff_cor1} For any smooth function $f: \mathbb{R}^n \times \mathbb{R}^n \rightarrow \mathbb{R}$
	\begin{align}
	\bigl(\E_{\pi^\sigma_{ij}} f- \E_{\gamma_i^{\tau_{\sigma(1)}} \otimes \gamma_j^{\tau_{\sigma(2)}}} f \bigr)^2 \leq \omega(\tau_2) \E_{\pi^\sigma_{ij}}\bigl(\tau_{\sigma(1)}  |\nabla_{x_1} f|^2 + \tau_{\sigma(2)}  |\nabla_{x_2} f|^2\bigr). 
	\end{align}
\end{corollary}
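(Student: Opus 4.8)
The plan is to obtain Corollary~\ref{p_meandiff_cor1} from Lemma~\ref{p_meandiff_approx} by a two-step telescoping argument that changes one marginal at a time, exploiting the product structures $\pi^\sigma_{ij} = \nu_i^{\tau_{\sigma(1)}} \otimes \nu_j^{\tau_{\sigma(2)}}$ and $\gamma_i^{\tau_{\sigma(1)}} \otimes \gamma_j^{\tau_{\sigma(2)}}$. To lighten notation I would write $\tau' := \tau_{\sigma(1)}$, $\tau'' := \tau_{\sigma(2)}$, and $\nu' := \nu_i^{\tau'}$, $\nu'' := \nu_j^{\tau''}$, $\gamma' := \gamma_i^{\tau'}$, $\gamma'' := \gamma_j^{\tau''}$; since $\sigma$ is a permutation of $\{1,2\}$ we have $\tau', \tau'' \le \tau_2$, so Lemma~\ref{p_meandiff_approx} applies in each variable at its own temperature. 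Inserting the intermediate measure $\gamma' \otimes \nu''$ and using $(a+b)^2 \le 2a^2 + 2b^2$,
\[
\bigl(\E_{\nu'\otimes\nu''} f - \E_{\gamma'\otimes\gamma''} f\bigr)^2 \le 2\bigl(\E_{\nu'\otimes\nu''} f - \E_{\gamma'\otimes\nu''} f\bigr)^2 + 2\bigl(\E_{\gamma'\otimes\nu''} f - \E_{\gamma'\otimes\gamma''} f\bigr)^2 .
\]

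For the first term, set $g(x_1) := \E^{x_2}_{\nu''} f(x_1, x_2)$, so the difference equals $\E^{x_1}_{\nu'} g - \E^{x_1}_{\gamma'} g$. Applying \eqref{e_meandiff_approx} in the variable $x_1$ at temperature $\tau'$ gives $\bigl(\E^{x_1}_{\nu'} g - \E^{x_1}_{\gamma'} g\bigr)^2 \le \omega(\tau_2)\, \tau' \,\E^{x_1}_{\nu'}|\nabla g|^2$, and differentiating under the integral sign followed by Cauchy--Schwarz in the $x_2$-average yields $|\nabla g(x_1)|^2 = \bigl|\E^{x_2}_{\nu''}\nabla_{x_1} f\bigr|^2 \le \E^{x_2}_{\nu''}|\nabla_{x_1} f|^2$, hence $\E^{x_1}_{\nu'}|\nabla g|^2 \le \E_{\pi^\sigma_{ij}}|\nabla_{x_1} f|^2$. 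For the second term, set $h(x_2) := \E^{x_1}_{\gamma'} f(x_1, x_2)$; the difference is $\E^{x_2}_{\nu''} h - \E^{x_2}_{\gamma''} h$, and \eqref{e_meandiff_approx} in $x_2$ at temperature $\tau''$ together with the same Cauchy--Schwarz step bounds it by $\omega(\tau_2)\, \tau'' \,\E^{x_2}_{\nu''}\E^{x_1}_{\gamma'}|\nabla_{x_2} f|^2$. Finally the density comparison \eqref{e_pdf_approx}, $\frac{d\gamma'}{d\nu'} = 1 + \omega(\tau_2)$ uniformly on the support of $\gamma'$, replaces $\gamma'$ by $\nu'$ in this last average at the cost of a factor $1 + \omega(\tau_2)$, so $\E^{x_2}_{\nu''}\E^{x_1}_{\gamma'}|\nabla_{x_2} f|^2 \le (1+\omega(\tau_2))\,\E_{\pi^\sigma_{ij}}|\nabla_{x_2} f|^2$. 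Collecting the two contributions,
\[
\bigl(\E_{\pi^\sigma_{ij}} f - \E_{\gamma'\otimes\gamma''} f\bigr)^2 \le 2\omega(\tau_2)\,\tau'\,\E_{\pi^\sigma_{ij}}|\nabla_{x_1} f|^2 + 2(1+\omega(\tau_2))\,\omega(\tau_2)\,\tau''\,\E_{\pi^\sigma_{ij}}|\nabla_{x_2} f|^2 ,
\]
and absorbing the harmless $O(1)$ prefactors into $\omega(\tau_2) = O(\sqrt{\tau_2}|\ln\tau_2|^{3/2})$ gives the stated inequality.

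I expect this to be essentially a bookkeeping argument with no new metastability input beyond Lemma~\ref{p_meandiff_approx}. The only mildly delicate points are the justification of differentiating under the integral and the interchange underlying $|\nabla \E^{x_2} f|^2 \le \E^{x_2}|\nabla f|^2$, which is handled exactly as in \cite{MS14} for smooth $f$ of moderate growth (or by an approximation argument on a dense class), and checking that every multiplicative correction collected along the way is $1 + O(1)$ so that it is absorbed after renaming $\omega(\tau_2)$. One could equally telescope through $\nu'\otimes\gamma''$ instead; by the symmetry between the two variables the outcome is the same.
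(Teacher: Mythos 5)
Your proof is correct and follows the same route as the paper: telescope through the intermediate product measure $\gamma_i^{\tau_{\sigma(1)}} \otimes \nu_j^{\tau_{\sigma(2)}}$ and apply Lemma~\ref{p_meandiff_approx} in each variable separately. The paper records only the telescoping identity and leaves the rest to the reader; you have filled in exactly those routine steps (conditional means, Jensen in the inner average, the elementary bound $(a+b)^2 \le 2a^2+2b^2$, and the density comparison \eqref{e_pdf_approx} to pass from $\gamma_i^{\tau_{\sigma(1)}}$ back to $\nu_i^{\tau_{\sigma(1)}}$), all of which are harmless $O(1)$ corrections absorbable into $\omega(\tau_2)$.
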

\begin{proof} This follows from the previous lemma by writing
	\begin{align}
\E_{\pi^\sigma_{ij}} f- \E_{\gamma_i^{\tau_{\sigma(1)}} \otimes \gamma_j^{\tau_{\sigma(2)}}} f &= \bigl(\E_{\nu_i^{\tau_{\sigma(1)}} \otimes \nu_j^{\tau_{\sigma(2)}}} f - \E_{\gamma_i^{\tau_{\sigma(1)}} \otimes \nu_j^{\tau_{\sigma(2)}}} f\bigr) \\ 
&\quad + \bigl(\E_{\gamma_i^{\tau_{\sigma(1)}} \otimes \nu_j^{\tau_{\sigma(2)}}} f - \E_{\gamma_i^{\tau_{\sigma(1)}} \otimes \gamma_j^{\tau_{\sigma(2)}}} f \bigr).
\end{align}
\end{proof}
This reduces our task to proving mean-difference estimate for truncated Gaussian.
\begin{lemma}[Mean-difference estimate for truncated Gaussians at two temperatures]\label{p_meandiff_truncated_Gaussian} For any smooth function $f: \mathbb{R}^n \rightarrow \mathbb{R}$
	\begin{align}
	(\E_{\gamma_i^{\tau_2}} f - \E_{\gamma_i^{\tau_1}} f)^2 &\leq 
	C_n \|\Sigma_i\| \left(1 +\Phi_n\Bigl(\frac{\tau_2}{\tau_1}\Bigr) \right) \tau_2 \E_{\gamma_i^{\tau_2}} |\nabla f|^2,
	 \label{e_meandiff_truncated_Gaussian}
	\end{align}
	where the function $\Phi_n$ is given by \eqref{eq:temperature_dependence}, and $C_n$ is a constant only depending on $n$. 
\end{lemma}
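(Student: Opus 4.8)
The plan is to express the mean difference as an $L^2(\gamma^{\tau_2})$-pairing against $\nabla f$ via a transport (divergence) identity adapted to the radial structure of the problem, and then to reduce the estimate to a one-dimensional radial integral whose dependence on $\tau_2/\tau_1$ reproduces the function $\Phi_n$ exactly. First I would reduce to the isotropic case: applying the affine change of variables $y=\Sigma_i^{-1/2}(x-m_i)$ and writing $\tilde f(y):=f(m_i+\Sigma_i^{1/2}y)$, the ellipsoids defining the truncation become concentric balls $B_{R_1}\subset B_{R_2}$ with $R_j^2=\tau_j|\ln\tau_2|$, the measures $\gamma_i^{\tau_1},\gamma_i^{\tau_2}$ become the isotropic Gaussians with covariances $\tau_1 I,\tau_2 I$ truncated to these balls, the mean difference is unchanged, and $|\nabla_y\tilde f|^2=\nabla_x f\cdot\Sigma_i\,\nabla_x f\le\|\Sigma_i\|\,|\nabla_x f|^2$ pointwise, so $\E_{\gamma^{\tau_2}}|\nabla_y\tilde f|^2\le\|\Sigma_i\|\,\E_{\gamma_i^{\tau_2}}|\nabla_x f|^2$. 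It therefore suffices, writing $\gamma^{\tau}$ for the isotropic truncated Gaussians, to prove $(\E_{\gamma^{\tau_2}}f-\E_{\gamma^{\tau_1}}f)^2\le C_n\bigl(1+\Phi_n(\tau_2/\tau_1)\bigr)\tau_2\,\E_{\gamma^{\tau_2}}|\nabla f|^2$.

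Next comes the transport identity. Since $\gamma^{\tau_1},\gamma^{\tau_2}$ are probability measures supported in $B_{R_2}$, the radial, sign-definite density $\gamma^{\tau_2}-\gamma^{\tau_1}$ has total mass zero on $B_{R_2}$, so I would solve $\nabla\cdot V=\gamma^{\tau_2}-\gamma^{\tau_1}$ on $B_{R_2}$ with the radial ansatz $V(x)=x\,h(|x|^2)$; integrating over $B_{\sqrt r}$ and using the divergence theorem forces $\kappa_n r^{n/2}h(r)=G(r)$, where $\kappa_n=|S^{n-1}|$ and $G(r):=\gamma^{\tau_2}(B_{\sqrt r})-\gamma^{\tau_1}(B_{\sqrt r})$, hence $h(r)=G(r)/(\kappa_n r^{n/2})$. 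Because $G(r)=O(r^{n/2})$ as $r\to0$, the field $V$ is Lipschitz with $V(0)=0$, and because $G(R_2^2)=1-1=0$, we get $V=0$ on $\partial B_{R_2}$. Integrating by parts (no boundary term) and applying Cauchy--Schwarz against $\gamma^{\tau_2}$,
\[
\bigl(\E_{\gamma^{\tau_2}}f-\E_{\gamma^{\tau_1}}f\bigr)^2=\Bigl(\int_{B_{R_2}}\nabla f\cdot V\,dx\Bigr)^2\le\Bigl(\int\frac{|V|^2}{\gamma^{\tau_2}}\,dx\Bigr)\,\E_{\gamma^{\tau_2}}|\nabla f|^2 .
\]
Since $|V(x)|^2=G(|x|^2)^2/(\kappa_n^2|x|^{2(n-1)})$ and $\gamma^{\tau_2}(x)=Z^{-1}e^{-|x|^2/(2\tau_2)}$ with $Z=(2\pi\tau_2)^{n/2}(1+o(1))$, passing to the variable $r=|x|^2$ gives $\int|V|^2/\gamma^{\tau_2}\,dx=\tfrac{Z}{2\kappa_n}\int_0^{R_2^2}G(r)^2\,e^{r/(2\tau_2)}r^{-n/2}\,dr$.

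The crux — and the step I expect to be the main obstacle — is to show this radial integral is $\lesssim_n(1+\Phi_n(\tau_2/\tau_1))\,\tau_2$. Here I would use the two elementary bounds $|G(r)|\le\gamma^{\tau_1}(B_{\sqrt r})\lesssim_n\min\{1,(r/\tau_1)^{n/2}\}$ and $|G(r)|\le\gamma^{\tau_2}(B_{\sqrt r}^c)\lesssim_n\mathbb P(\chi^2_n>r/\tau_2)$, both valid since $\gamma^{\tau_1}$ dominates $\gamma^{\tau_2}$ on balls. Splitting at $r\sim\tau_1$ and $r\sim\tau_2$: on $r\le\tau_1$ the integrand is $\lesssim (r/\tau_1)^{n}r^{-n/2}$, contributing $O_n(\tau_1^{1-n/2})$; on $\tau_1\le r\le\tau_2$ it is $\lesssim r^{-n/2}$, contributing $\int_{\tau_1}^{\tau_2}r^{-n/2}\,dr$; and on $\tau_2\le r\le R_2^2$, using $\mathbb P(\chi^2_n>u)\lesssim_n u^{n/2-1}e^{-u/2}$ one finds $G(r)^2e^{r/(2\tau_2)}r^{-n/2}\lesssim_n(r/\tau_2)^{n/2-2}e^{-r/(2\tau_2)}\tau_2^{-n/2}$, which integrates to $O_n(\tau_2^{1-n/2})$. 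The middle term carries the temperature dependence: $\int_{\tau_1}^{\tau_2}r^{-n/2}\,dr$ equals $2(\sqrt{\tau_2}-\sqrt{\tau_1})$ for $n=1$, $\ln(\tau_2/\tau_1)$ for $n=2$, and $\tfrac{2}{n-2}(\tau_1^{1-n/2}-\tau_2^{1-n/2})$ for $n\ge3$ — i.e. $O_n\bigl(\tau_2^{1-n/2}(1+\Phi_n(\tau_2/\tau_1))\bigr)$ in all cases. Multiplying the resulting bound $\int_0^{R_2^2}(\cdots)\,dr\lesssim_n\tau_2^{1-n/2}(1+\Phi_n(\tau_2/\tau_1))$ by $Z/(2\kappa_n)\sim\tau_2^{n/2}$ yields $\int|V|^2/\gamma^{\tau_2}\lesssim_n\tau_2(1+\Phi_n(\tau_2/\tau_1))$, which together with the two reductions above completes the proof.

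The delicate points I anticipate are all in the last step: getting the dimension-dependent exponent exactly right (it comes precisely from the behaviour of $\int r^{-n/2}\,dr$ at its lower endpoint, which is why $n=2$ is logarithmic and $n=1$ is bounded), handling the truncation uniformly so that the $\chi^2_n$ ball and tail estimates hold with constants depending only on $n$ (this uses $\tau_1<\tau_2$, guaranteeing $B_{R_1}\subset B_{R_2}$ and hence $G(R_2^2)=0$), and checking that the $1+O(\sqrt{\tau_2}|\ln\tau_2|^{3/2})$ corrections from the normalization $Z$ and from replacing $\gamma^{\tau}(B_\rho)$ by $\mathbb P(\chi^2_n\le\rho^2/\tau)$ are absorbed into the stated constants without affecting the leading order.
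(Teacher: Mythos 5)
Your argument is correct, and it is a genuinely different proof from the one in the paper. The paper establishes the estimate by the scaling coupling $\gamma^{\tau}=(\sqrt{\tau}\,\cdot)_{\#}\gamma^{1}$, writing $(\E_{\gamma^{\tau_2}}f-\E_{\gamma^{\tau_1}}f)^2\le\E_{\gamma^1}\bigl(f(\sqrt{\tau_2}X)-f(\sqrt{\tau_1}X)\bigr)^2$, then applying the fundamental theorem of calculus along rays and Cauchy--Schwarz in polar coordinates, splitting the resulting double integral into a near-origin piece $I_1$ (which produces $\Phi_n$ from $\int_{\sqrt{\tau_1}}^{\sqrt{\tau_2}}u^{-(n-1)}du$) and a far piece $I_2$. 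You instead build a radial divergence-free transport field $V(x)=x\,h(|x|^2)$ with $\nabla\cdot V=\gamma^{\tau_2}-\gamma^{\tau_1}$, solve explicitly $h(r)=G(r)/(\kappa_n r^{n/2})$, integrate by parts, and bound $\int|V|^2/\gamma^{\tau_2}$ by a one-dimensional integral $\int_0^{R_2^2}G(r)^2e^{r/(2\tau_2)}r^{-n/2}\,dr$ whose middle range $[\tau_1,\tau_2]$ produces $\Phi_n$. This is closer in spirit to the weighted transport framework of \cite[Theorem~2.12]{MS14} used elsewhere in the paper for the $C_{kj}^{\tau_2}$ mean-difference estimates, so it arguably makes the mechanism more uniform across the proof; it also isolates the source of the $\Phi_n$ factor cleanly as the small-$r$ singularity of $r^{-n/2}$. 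The paper's coupling argument is more elementary and avoids solving the divergence equation, but uses the special scaling structure of Gaussians (pushforward under $X\mapsto\sqrt{\tau}X$), which your argument does not need beyond the radial symmetry and the monotonicity $\gamma^{\tau_2}(B_\rho)\le\gamma^{\tau_1}(B_\rho)$. One small point to be careful about, which you flagged yourself: the tail bound $\mathbb{P}(\chi^2_n>u)\lesssim_n u^{n/2-1}e^{-u/2}$ is only uniform for $u\ge 1$ (and for $n\ge3$ is genuinely an asymptotic bound near $u=n$), but since you apply it only on $r\ge\tau_2$, i.e.\ $u=r/\tau_2\ge1$, the constant can be taken to depend only on $n$, so this is fine. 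The constants and the exponent in $\tau_2^{1-n/2}$ multiply out correctly against $Z/(2\kappa_n)\sim\tau_2^{n/2}$ to give the claimed $C_n\|\Sigma_i\|\bigl(1+\Phi_n(\tau_2/\tau_1)\bigr)\tau_2$.
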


\begin{proof} By change of variables, it suffices to show the first inequality for $m_i = 0, \Sigma_i = \Id$. From the Cauchy-Schwarz inequality and the fundamental theorem of calculus, we can deduce
	\begin{align*}
	(\E_{\gamma_i^{\tau_2}} f - \E_{\gamma_i^{\tau_1}} f)^2 & \leq \E_{\gamma_i^{1}}\bigl(f(\sqrt{\tau_2} X) - f(\sqrt{\tau_1} X)\bigr)^2 \\
	&\leq \int_{S^{n-1}} d\omega \int_0^R \left(\int_{\sqrt{\tau_1} r }^{\sqrt{\tau_2} r} |\nabla f(s \omega)| ds \right)^2 \frac{e^{-\frac{r^2}{2}}}{Z_R}  r^{n-1} dr\\
	&\leq 2 (I_1 + I_2),
	\end{align*}
	where, we recall that $R\geq 1$ from Definition~\ref{d_truncated_Gaussian}
	\begin{align*}
	I_1 &:= \int_{S^{n-1}} d\omega \int_0^R \left(\int_{\sqrt{\tau_1} r }^{\sqrt{\tau_2} r} |\nabla f(s \omega)|\mathbbm{1}_{s\leq  \sqrt{\tau_2}} ds \right)^2 \frac{e^{-\frac{r^2}{2}}}{Z_R}  r^{n-1} dr, \\
	I_2 &:=  \int_{S^{n-1}} d\omega \int_0^R \left(\int_{\sqrt{\tau_1} r }^{\sqrt{\tau_2} r} |\nabla f(s \omega)|\mathbbm{1}_{s>  \sqrt{\tau_2}} ds \right)^2 \frac{e^{-\frac{r^2}{2}}}{Z_R}  r^{n-1} dr.
	\end{align*}	
	Estimate for $I_2$: By Cauchy-Schwarz,
	\begin{align*}
	I_2 &\leq  \int_{S^{n-1}} d\omega \int_0^R (\sqrt{\tau_2} r - \sqrt{\tau_1} r) \left(\int_{\sqrt{\tau_2}}^{R \sqrt{\tau_2} } |\nabla f(s \omega)|^2 \mathbbm{1}_{s\leq r \sqrt{\tau_2}} ds\right) \frac{e^{-\frac{r^2}{2}}}{Z_R}  r^{n-1} dr \\
	&\leq \sqrt{\tau_2} \int_{S^{n-1}} d\omega \int_{\sqrt{\tau_2}}^{R \sqrt{\tau_2}} |\nabla f(s \omega)|^2 \left(\int_{\frac{s}{\sqrt{\tau_2}}}^R   \frac{e^{-\frac{r^2}{2}}}{Z_R}  r^n dr \right) ds.
	\end{align*}
	Using integration by parts and standard Gaussian tail bound, for $s\geq \sqrt{\tau_2}$,
	\[
	\int_{\frac{s}{\sqrt{\tau_2}}}^R   e^{-\frac{r^2}{2}}  r^n dr \leq C_n e^{-\frac{s^2}{2\tau_2}} \left(\frac{s^2}{\tau_2} \right)^{\frac{n-1}{2}}  
	\]
	where $C_n$ is a constant only depending on $n$. This gives 
	\begin{align*}
	I_2 \leq C_n \tau_2 \E_{\gamma_i^{\tau_2}} |\nabla f|^2. 
	\end{align*}
	Estimate for $I_1$: By Cauchy-Schwarz 
	\begin{align*} I_1 &\leq \int_{S^{n-1}} d\omega \int_0^R \left(\int_{0 }^{\sqrt{\tau_2} } |\nabla f(s \omega)|^2 s^{n-1} ds \right) \left(\int_{\sqrt{\tau_1} r }^{\sqrt{\tau_2} r  } s^{-(n-1)} ds \right) \frac{e^{-\frac{r^2}{2}}}{Z_R}  r^{n-1} dr \\
	&= \frac{1}{Z_R} \| \nabla f \|^2_{L^2(B_{\sqrt{\tau_2}}(0))}  \int_0^R \left(\int_{\sqrt{\tau_1} }^{\sqrt{\tau_2}   } u^{-(n-1)} du \right) r e^{-\frac{r^2}{2}} dr \\
	&\leq C_n e^{\frac{1}{2}} \tau_2  \E_{\gamma_i^{\tau_2}} |\nabla f|^2 \cdot \Phi_n\Bigl(\frac{\tau_2}{\tau_1}\Bigr),
	\end{align*}
	where $C_n$ is a constant only depending on $n$.
\end{proof}
\begin{corollary}\label{p_meandiff_cor2} For any smooth function $f: \mathbb{R}^n \times \mathbb{R}^n \rightarrow \mathbb{R}$
	\begin{align*}
	(\E_{\gamma_i^{\tau_{1}} \otimes \gamma_j^{\tau_{2}}} f - \E_{\gamma_i^{\tau_{2}} \otimes \gamma_j^{\tau_{1}}} f)^2 \leq \left(1 +\Phi_n\Bigl(\frac{\tau_2}{\tau_1}\Bigr) \right) O(\tau_2) \bigl(\E_{\pi^+_{ij}} |\nabla_{x_2} f|^2 + \E_{\pi^-_{ij}} |\nabla_{x_1} f|^2 \bigr).  
	\end{align*}
\end{corollary}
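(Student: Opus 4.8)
The strategy is to change the temperature in one coordinate at a time, so that each step is controlled by the one-variable estimate of Lemma~\ref{p_meandiff_truncated_Gaussian}, and then to transfer the resulting squared-gradient integrals from the truncated Gaussians back to the local Gibbs measures, hence to $\pi^+_{ij}$ and $\pi^-_{ij}$, using the pointwise density comparison~\eqref{e_pdf_approx} of Lemma~\ref{p_meandiff_approx}.

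First I would insert the intermediate product measure $\gamma_i^{\tau_1}\otimes\gamma_j^{\tau_1}$ and write
\[
\E_{\gamma_i^{\tau_1}\otimes\gamma_j^{\tau_2}} f - \E_{\gamma_i^{\tau_2}\otimes\gamma_j^{\tau_1}} f
= \bigl(\E_{\gamma_i^{\tau_1}\otimes\gamma_j^{\tau_2}} f - \E_{\gamma_i^{\tau_1}\otimes\gamma_j^{\tau_1}} f\bigr)
+ \bigl(\E_{\gamma_i^{\tau_1}\otimes\gamma_j^{\tau_1}} f - \E_{\gamma_i^{\tau_2}\otimes\gamma_j^{\tau_1}} f\bigr),
\]
so that $(a+b)^2 \le 2a^2 + 2b^2$ reduces the claim to estimating the two squared differences separately. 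In the first difference only the $x_2$-marginal changes temperature; writing it as $\E^{x_1}_{\gamma_i^{\tau_1}}\!\bigl[\E^{x_2}_{\gamma_j^{\tau_2}} f(x_1,\cdot) - \E^{x_2}_{\gamma_j^{\tau_1}} f(x_1,\cdot)\bigr]$, Jensen's inequality in $x_1$ followed by Lemma~\ref{p_meandiff_truncated_Gaussian} applied to $x_2 \mapsto f(x_1,x_2)$ for each fixed $x_1$ gives
\[
\bigl(\E_{\gamma_i^{\tau_1}\otimes\gamma_j^{\tau_2}} f - \E_{\gamma_i^{\tau_1}\otimes\gamma_j^{\tau_1}} f\bigr)^2
\le C_n \|\Sigma_j\|\Bigl(1 + \Phi_n\bigl(\tfrac{\tau_2}{\tau_1}\bigr)\Bigr)\, \tau_2\, \E_{\gamma_i^{\tau_1}\otimes\gamma_j^{\tau_2}} |\nabla_{x_2} f|^2 .
\]
By symmetry the second difference, in which only the $x_1$-marginal changes temperature, obeys the same bound with $\|\Sigma_i\|$, $|\nabla_{x_1}f|^2$, and reference measure $\gamma_i^{\tau_2}\otimes\gamma_j^{\tau_1}$.

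It then remains to pass back to $\pi^\pm_{ij}$. Since~\eqref{e_pdf_approx} gives $d\gamma_k^\tau/d\nu_k^\tau = 1 + \omega(\tau_2)$ uniformly on the support of $\gamma_k^\tau$, for any nonnegative integrand one has $\E_{\gamma_i^{\tau_1}\otimes\gamma_j^{\tau_2}}[\,\cdot\,] \le (1+\omega(\tau_2))\,\E_{\nu_i^{\tau_1}\otimes\nu_j^{\tau_2}}[\,\cdot\,] = (1+\omega(\tau_2))\,\E_{\pi^+_{ij}}[\,\cdot\,]$, and likewise $\E_{\gamma_i^{\tau_2}\otimes\gamma_j^{\tau_1}}[\,\cdot\,] \le (1+\omega(\tau_2))\,\E_{\pi^-_{ij}}[\,\cdot\,]$. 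Moreover, the Morse condition~\eqref{emorse} bounds the operator norms $\|\Sigma_i\| = \|(\nabla^2 H(m_i))^{-1}\| \le C_H$, so $C_n\|\Sigma_i\|\tau_2$ and $C_n\|\Sigma_j\|\tau_2$ are both $O(\tau_2)$; absorbing the $1+\omega(\tau_2)$ factors into $O(\tau_2)$ and recombining the two squared differences yields exactly the claimed inequality.

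I do not anticipate a genuine obstacle here, as this is a routine consequence of Lemmas~\ref{p_meandiff_truncated_Gaussian} and~\ref{p_meandiff_approx}; the only points needing care are bookkeeping. One must choose the intermediate measure so that, after the Gaussian density comparison, the gradient in each coordinate $x_k$ is paired with the measure in which $x_k$ carries the higher temperature $\tau_2$ — this is precisely why $|\nabla_{x_2}f|^2$ ends up integrated against $\pi^+_{ij}$ and $|\nabla_{x_1}f|^2$ against $\pi^-_{ij}$ — and one must observe that~\eqref{e_pdf_approx} is being invoked only on nonnegative integrands (squared gradients), so the one-sided density bound $\E_\gamma[\,\cdot\,] \le (1+\omega(\tau_2))\E_\nu[\,\cdot\,]$ is all that is required.
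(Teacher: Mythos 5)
Your proof is correct and takes essentially the same approach as the paper: insert the intermediate product measure $\gamma_i^{\tau_1}\otimes\gamma_j^{\tau_1}$, apply Lemma~\ref{p_meandiff_truncated_Gaussian} to each coordinate separately (via Jensen), and pass from the truncated Gaussian reference measures back to $\pi^\pm_{ij}$ using the density comparison~\eqref{e_pdf_approx} together with the Morse bound on $\|\Sigma_i\|$, $\|\Sigma_j\|$. The paper states this in a one-line sketch; you have simply filled in the same steps in more detail.
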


\begin{proof} This follows from the previous lemma and \eqref{e_pdf_approx} by writing
	\begin{align*}
	\E_{\gamma_i^{\tau_{1}} \otimes \gamma_j^{\tau_{2}}} f - \E_{\gamma_i^{\tau_{2}} \otimes \gamma_j^{\tau_{1}}} f = (\E_{\gamma_i^{\tau_{1}} \otimes \gamma_j^{\tau_{2}}} f - \E_{\gamma_i^{\tau_{1}} \otimes \gamma_j^{\tau_{1}}} f ) + (\E_{\gamma_i^{\tau_{1}} \otimes \gamma_j^{\tau_{1}}} f - \E_{\gamma_i^{\tau_{2}} \otimes \gamma_j^{\tau_{1}}} f).
	\end{align*}
\end{proof}
Lemma \ref{p_meandiff_prod} follows from Corollary \ref{p_meandiff_cor1} and \ref{p_meandiff_cor2}.
\begin{remark} One can show a weaker version of Lemma \ref{p_meandiff_prod} by a simpler approach: First we split the mean-difference as
	\begin{align}
	(\E_{\pi^+} f - \E_{\pi^-} f)^2 \leq 2\E_{\nu_i^{\tau_1}}(\E_{\nu_j^{\tau_2}} f - \E_{\nu_j^{\tau_1}} f)^2 + 2\E_{\nu_j^{\tau_1}}(\E_{\nu_i^{\tau_1}} f - \E_{\nu_i^{\tau_2}} f)^2 	
	\end{align} 
	Now, using the covariance representation of mean-difference and Cauchy-Schwarz
	\begin{align}
	(\E_{\nu_j^{\tau_2}} f - \E_{\nu_j^{\tau_1}} f)^2 \leq \var_{\nu_j^{\tau_2}} (f) \var_{\nu_j^{\tau_2}}\biggl(\frac{d\nu_j^{\tau_1}}{d\nu_j^{\tau_2}}\biggr) \leq O(\tau_2) \E_{\nu_j^{\tau_2}} |\nabla_{x_2} f|^2 \E_{\nu_j^{\tau_1}}\biggl(\frac{d\nu_j^{\tau_1}}{d\nu_j^{\tau_2}}\biggr).
	\end{align}
	Finally, using the partition size given in \eqref{e_spGM_admissible} we have
	\begin{align}
	\frac{d\nu_j^{\tau_1}}{d\nu_j^{\tau_2}} = \frac{\nu_j^{\tau_2}(\Omega_j)}{\nu_j^{\tau_1}(\Omega_j)} e^{-H(x)(\tau_1^{-1} - \tau_2^{-1})} \leq \frac{\nu_j^{\tau_2}(\Omega_j)}{\nu_j^{\tau_1}(\Omega_j)} \leq \sqrt{\tau_2/\tau_1}^n (1+O(\sqrt{\tau_2}|\ln \tau_2|^{3/2})).
	\end{align}
\end{remark}

\subsection{Proof of Proposition \ref{p_opt_local}}\label{s_opt_local_PI} 

It suffices to consider test functions of the form $f(x, y) = f(x)$. This is equivalent to replacing $\mu$ by its first marginal, which is $\bar\mu = \frac{1}{2}(\nu^{\tau_1} + \nu^{\tau_2})$. In this case,  $\var_\mu (f)$ and $\mathcal{E}_\mu (f)$ reduces to
\begin{align}
\var_{\bar{\mu}}(f) &= \frac{1}{2}( \var_{\nu^{\tau_1}}(f) + \var_{\nu^{\tau_2}} (f)) + \frac{1}{4} (\E_{\nu^{\tau_1}} f - \E_{\nu^{\tau_2}} f)^2, \\
\mathcal{E}_{\bar{\mu}}(f) &= \frac{1}{2}(\tau_1 \E_{\nu^{\tau_1}} |\nabla f|^2 + \tau_2 \E_{\nu^{\tau_2}} |\nabla f|^2).
\end{align}
We further restrict $f$ to $C_c (\Omega_1)$. Recall the notation $\approx, \lessapprox$ defined in \eqref{eq:approx_notation}. By \eqref{e_spGM_admissible} and \eqref{eq:nondeg}, $\nu^{\tau_1}(\Omega_1), \nu^{\tau_2}(\Omega_1) \approx 1$ once $\tau_1, \tau_2$ are small enough, so $\frac{d\nu^{\tau_1}_1}{d\nu^{\tau_1}}, \frac{d\nu^{\tau_2}_1}{d\nu^{\tau_2}} \approx 1$ on $\Omega_1$ (see equation \eqref{e_spGm_local}). A crude application of Young's inequality then yields
\begin{align*}
\var_{\bar \mu}(f) &\gtrsim (\E_{\nu^{\tau_1}} f)^2 - 4(\E_{\nu^{\tau_2}} f)^2 \gtrsim
(\E_{\nu^{\tau_1}_1} f)^2 - 5(\E_{\nu^{\tau_2}_1} f)^2, \\
\mathcal{E}_{\bar \mu}(f)  &\lesssim
\tau_1 \E_{\nu^{\tau_1}_1} |\nabla f|^2 + \tau_2 \E_{\nu^{\tau_2}_1} |\nabla f|^2,
\end{align*}
where $\lesssim$ means $\leq$ up to a multiplicative constant. By change of variables, we may assume $m_1 = 0, \Sigma_1 = (\nabla^2 H(m_1))^{-1} = \Id$. We consider a test function of the form 
\[f(x) = f_\varepsilon (x) = h(|x|/\sqrt{\varepsilon}),\]
where $h \geq 0$ is a compactly supported, absolutely continuous function and $\tau_1 \leq \varepsilon \leq \tau_2$ is a scaling parameter, both to be specified later. As in the proof of Lemma \ref{p_meandiff_prod}, we will approximate by truncated Gaussian measures (see Definition \ref{d_truncated_Gaussian}). Since $\varepsilon \leq \tau_2$, $f_\varepsilon$ is supported in the support of $\gamma_1^{\tau_2}$. By Lemma \ref{p_meandiff_approx}, 
\begin{align}
\var_{\bar \mu}(f) &\gtrsim  (\E_{\gamma^{\tau_1}_{1}} f_\varepsilon)^2 - 6(\E_{\gamma^{\tau_2}_{1}} f_\varepsilon)^2
\label{e_opt_local_var}, \\
\mathcal{E}_{\bar \mu}(f)  &\lesssim \tau_1 \E_{\nu^{\tau_1}_1} |\nabla f_\varepsilon|^2 + \tau_2 \E_{\gamma^{\tau_2}_1} |\nabla f_\varepsilon|^2 \label{e_opt_local_dirichlet},
\end{align}
if $\tau_2$ is small enough. By rescaling, we have:
\begin{align} 
\tau_1 \E_{\nu^{\tau_1}_1} |\nabla f_\varepsilon|^2 &= \frac{\tau_1}{\varepsilon} \E_{\nu^{\frac{\tau_1}{\varepsilon}}_1} |\nabla f_1|^2  \label{e_opt_local_1}, \\
\tau_2 \E_{\gamma^{\tau_2}_1} |\nabla f_\varepsilon|^2 &= \frac{\tau_2}{\varepsilon} \E_{\gamma^{\frac{\tau_2}{\varepsilon}}_1} |\nabla f_1|^2 \leq  \frac{1}{\sqrt{2\pi}^n} (\varepsilon/\tau_2)^{(n-2)/2} \|\nabla f_1\|^2_{L^2} \label{e_opt_local_2}, \\
\E_{\gamma^{\tau_2}_{1}} f_\varepsilon &= \E_{\gamma^{\frac{\tau_2}{\varepsilon}}_{1}} f_1 \leq \frac{1}{\sqrt{2\pi}^n} (\varepsilon/\tau_2)^{n/2} \|f_1\|_{L^1} \label{e_opt_local_3},
\end{align}
and for any $r\geq 0$,
\begin{align}
\E_{\gamma^{\tau_1}_{1}} f_\varepsilon = \E_{\gamma^{\frac{\tau_1}{\varepsilon}}_{1}} f_1 \geq  P_{\gamma^{\frac{\tau_1}{\varepsilon}}_1}(|X| \leq r) \cdot \inf_{|x| \leq r} f_1  \geq  \left(1 - n e^{-\frac{r^2}{2n} \frac{\varepsilon}{\tau_1}} \right) \cdot \inf_{[0, r]} h \label{e_opt_local_4}.
\end{align}

In the following $R_n > 0$ is the number such that $\exp{\left(-\frac{R_n^2}{2n} \right)} = \frac{1}{2}$.

Case 1: $n\geq 3$. We choose $h$ to be a compactly supported smooth function such that $h = 1$ on $[0, R_n]$, decreases to $0$ on $[R_n, 2R_n]$ and is $0$ outside $[0, 2R_n]$. Then
\begin{equation*} 
\tau_2 \E_{\gamma^{\tau_2}_1} |\nabla f_\varepsilon|^2 \overset{\eqref{e_opt_local_2}}{\lesssim} (\varepsilon/\tau_2)^{(n-2)/2}, \,\, \E_{\gamma^{\tau_2}_{1}} f_\varepsilon \overset{\eqref{e_opt_local_3}}{\lesssim}  (\varepsilon/\tau_2)^{n/2}, \,\, \E_{\gamma^{\tau_1}_{1}} f_\varepsilon \overset{\eqref{e_opt_local_4}}{\geq} \frac{1}{2},
\end{equation*}
where the implicit constants only depend on the dimension $n$ and the function $h$. Since $h' = 0$ on $[0, R_n]$
\begin{align*}
\tau_1 \E_{\nu^{\tau_1}_1} |\nabla f_\varepsilon|^2 \overset{\eqref{e_opt_local_1}}{\leq} \frac{\tau_1}{\varepsilon}
\|h'\|^2_{L^\infty} P_{\nu^{\frac{\tau_1}{\varepsilon}}_1} (|X|\geq R_n)  \leq \frac{\tau_1}{\varepsilon}
\|h'\|^2_{L^\infty} C_H e^{-c_H \frac{\varepsilon}{\tau_1}} \lesssim_m (\tau_1/\varepsilon)^{m},
\end{align*}
for every positive integer $m$, where the constants $c_H, C_H > 0$ only depend on the Hamiltonian $H$. The second inequality is a consequence of Assumption \ref{assumeenv} (see \cite[Lemma 3.13]{MS14}). Now, for any $0 < \eta < \frac{1}{2}$, set $\varepsilon = \tau_1^{1-\eta} \tau_2^\eta$, and choose $m$ large enough so that $\eta m \geq (1-\eta)(n-2)/2$, we obtain
\begin{align*}
\mathcal{E}_{\bar \mu}(f)  \overset{\eqref{e_opt_local_dirichlet}}{\lesssim_{\eta}} (\tau_1/\tau_2)^{(1-\eta)(n-2)/2}, \,\,
\var_{\bar \mu}(f) &\overset{\eqref{e_opt_local_var}}{\gtrsim_{\eta}} (\tau_2/\tau_1)^{(1-\eta)(n-2)/2} \mathcal{E}_{\bar \mu}(f),
\end{align*}
if $\tau_2$, $\tau_1/\tau_2$ are both small enough. \medskip

Case 2: $n=2$. Let $h$ be the function given by
\begin{equation*}
h(r) = \begin{cases} 1 &  \mbox{ for } 0\leq r\leq r_0 \\
2 (1-r^\alpha) &  \mbox{ for } r_0\leq r\leq 1 \\
0 &  \mbox{ for } r\geq 1,
\end{cases}
\end{equation*} 
for parameters $0 < \alpha < 1, 0 < r_0 < 1$ satisfying $r_0^\alpha = \frac{1}{2}$, to be specified later. Then $h$ is absolutely continuous, $h'=0$ on $[0, r_0]$, and by direct computation
\begin{align*}
\|f_1\|_{L^1} \leq \pi\alpha, \quad \|\nabla f_1\|^2_{L^\infty} = \alpha^2 r_0^{-2}, \quad \|\nabla f_1\|_{L^2}^2 = 3\pi \alpha.
\end{align*}
We choose $\varepsilon = \tau_2$ and $r_0^2 \frac{\tau_2}{\tau_1} = R_2^2$ (which is possible once $\tau_1/\tau_2$ is small enough). Then:
\begin{align*}
\E_{\gamma^{\tau_2}_{1}} f_\varepsilon &\overset{\eqref{e_opt_local_3}}{\leq}  \frac{1}{2\pi} \frac{\varepsilon}{\tau_2} \|f_1\|_{L^1} \leq  \frac{\alpha}{2}, \,\,
\E_{\gamma^{\tau_1}_{1}} f_\varepsilon \overset{\eqref{e_opt_local_4}}{\geq} \frac{1}{2},   \\
\tau_1 \E_{\nu^{\tau_1}_1} |\nabla f_\varepsilon|^2 &\overset{\eqref{e_opt_local_1}}{\leq} \frac{\tau_1}{\varepsilon} \|\nabla f_1\|^2_{L^\infty} \leq \frac{\alpha^2}{R_2^2}, \,\,
\tau_2 \E_{\gamma^{\tau_2}_1} |\nabla f_\varepsilon|^2 \overset{\eqref{e_opt_local_2}}{\leq}  \frac{1}{2\pi}  \|\nabla f_1\|^2_{L^2} = \frac{3\alpha}{2} .
\end{align*}
Since $r_0^\alpha = \frac{1}{2}$,
$\frac{1}{\alpha} = \frac{1}{2\ln 2}\ln \left(\frac{\tau_2}{\tau_1 R_2^2}\right)$. Thus
\begin{align*}
\mathcal{E}_{\bar \mu}(f)  \overset{\eqref{e_opt_local_dirichlet}}{\lesssim} \frac{\alpha^2}{R_2^2} + \frac{3\alpha}{2}, \quad
\var_{\bar \mu}(f) \overset{\eqref{e_opt_local_var}}{\gtrsim} \frac{1}{\alpha} \mathcal{E}_{\bar \mu}(f) \gtrsim \ln \left(\frac{\tau_2}{\tau_1}\right) \mathcal{E}_{\bar \mu}(f),
\end{align*}
if $\tau_2, \tau_1/\tau_2$ are both small enough.

\subsection{Proof of Proposition \ref{p_opt_EK_1d} and Proposition \ref{p_opt_EK_1d_LSI}}\label{s_proof_opt_EK_1d} It suffices to consider test functions of the form $f(x, y) = g(x)g(y)$. This is equivalent to replacing $\mu$ by $\pi = \nu^{\tau_1} \otimes \nu^{\tau_2}$. In this case, $\var_\mu (f), \ent_\mu(f^2), \mathcal{E}_\mu (f), \mathcal{I}_\mu (f)$ reduce to
\begin{align}
\var_\pi (f) &= \E_{\nu^{\tau_1}} g^2 \E_{\nu^{\tau_2}} g^2 - (\E_{\nu^{\tau_1}} g)^2 (\E_{\nu^{\tau_2}} g)^2, \\
\ent_\pi (f) &= \E_{\nu^{\tau_1}} g^2 \ent_{\nu^{\tau_2}} g^2 + \E_{\nu^{\tau_2}} g^2 \ent_{\nu^{\tau_1}} g^2, \\
\frac{1}{2}\mathcal{I}_\pi (f^2) = \mathcal{E}_\pi (f) &= \tau_1  \E_{\nu^{\tau_1}} (g')^2 \E_{\nu^{\tau_2}} g^2 + \tau_2  \E_{\nu^{\tau_1}} g^2 \E_{\nu^{\tau_2}} (g')^2.
\end{align}

We represent $\nu^{\tau_i}$ for $i=1,2$ as the mixture
\[
\nu^{\tau_i} = Z_1^{\tau_i} \nu_1^{\tau_i} + Z_2^{\tau_i} \nu_2^{\tau_i} \quad \text{where } \nu_1^{\tau_i}:= \nu^{\tau_i}|_{\Omega_1}, \nu_2^{\tau_i}:= \nu^{\tau_i}|_{\Omega_2},
\]
where $\Omega_1 := (-\infty, s), \Omega_2: = (s, \infty)$. Recall the notation $\approx, \lessapprox$ defined in \eqref{eq:approx_notation}. Denote
\begin{align}
Z_1^{\tau_i} = \nu^{\tau_i} (\Omega_1)  \approx 1, \quad
Z_2^{\tau_i}  = \nu^{\tau_i} (\Omega_2) \approx \frac{\sqrt{H''(m_1)}}{\sqrt{H''(m_2)}} e^{-H(m_2)/\tau_i}.
\end{align}

Proof of Proposition \ref{p_opt_EK_1d} (\textbf{Optimality of PI in 1d}): Imposing $\E_{\nu^{\tau_1}} g = 0$, we get
\[
\frac{\mathcal{E}_\pi (f)}{\var_\pi (f)} = \tau_1 \frac{\E_{\nu^{\tau_1}} (g')^2}{\E_{\nu^{\tau_1}} g^2} + \tau_2 \frac{\E_{\nu^{\tau_2}} (g')^2}{\E_{\nu^{\tau_2}} g^2}.
\]

We make the following ansatz for $g$:
\begin{equation}
g(x) = \begin{cases} g(m_1) &  \mbox{ for } x \leq s-\delta \\
g(m_1) + \frac{g(m_2) - g(m_1)}{\sqrt{2\pi \sigma \tau_2}} \cdot \kappa \int_{s-\delta}^x e^{-(y-s)^2/(2\sigma \tau_2)} dy &  \mbox{ for } s-\delta < x < s + \delta \\
g(m_2) & \mbox{ for } x > s+\delta,
\end{cases}
\end{equation}
where $\sigma$ is a positive constant to be specified later, $\delta = \sqrt{2r_0 \tau_2 |\ln \tau_2|}$ for some positive constant $r_0$ to be chosen later, and $\kappa$ is chosen so that $g$ is continuous at $s+\delta$. (This is the same kind of ansatz used in \cite[Section 2.4]{MS14}.) Then $\kappa = 1 + O(\tau_2^{-r_0/\sigma}) \approx 1$ once $r_0$ is large enough. Fix such a choice of $r_0$. For $\tau_2$ small enough, $\delta$ is small enough so that
\begin{align}
\E_{\nu^{\tau_i}} g \approx g(m_1)Z_1^{\tau_i} + g(m_2)Z_2^{\tau_i}.
\end{align}
This motivates the choice
\[g(m_1) \approx -1, g(m_2) \approx 1/Z_2^{\tau_1},
\]
such that $\E_{\nu^{\tau_1}} g = 0$. Then
\begin{align}
\E_{\nu^{\tau_2}} g^2 &\approx Z_1^{\tau_2} g(m_1)^2 + Z_2^{\tau_2} g(m_2)^2 \approx g(m_2)^2 Z_2^{\tau_2}, \\
\E_{\nu^{\tau_1}} g^2 &\approx Z_1^{\tau_1} g(m_1)^2 + Z_2^{\tau_1} g(m_2)^2 \approx g(m_2)^2 Z_2^{\tau_1} .
\end{align}
Finally, we compute the Dirichlet forms. By Taylor expansion of $H$ around $s$
\begin{align}
\E_{\nu^{\tau_2}} (g')^2 &\approx \frac{g(m_2)^2}{2\pi\sigma \tau_2} \frac{1}{Z^{\tau_2}} \int_{B_\delta(s)} e^{-(x-s)^2/(\sigma \tau_2) - H(x)/\tau_2} dx \\
&\approx \frac{g(m_2)^2}{2\pi \sigma \tau_2} \frac{\sqrt{H''(m_1)}}{\sqrt{2\pi \tau_2}} e^{-H(s)/\tau_2} \int_{B_\delta(s)} e^{-(x-s)^2/(2\tau_2)(2/\sigma + H''(s))} dx\\
&\approx g(m_2)^2 \frac{\sqrt{H''(m_1)}}{2\pi \tau_2} e^{-H(s)/\tau_2} \sqrt{|H''(s)|},
\end{align} 
where we set $\sigma = 1/|H''(s)| = -1/H''(s)$. This implies
\[
\tau_2 \frac{\E_{\nu^{\tau_2}} (g')^2}{\E_{\nu^{\tau_2}} g^2} \approx \frac{\sqrt{H''(m_2)|H''(s)|}}{2\pi} e^{(H(m_2)-H(s))/\tau_2} \approx \rho.
\]
It remains to show the other term is asymptotically negligible:
\begin{align}
\E_{\nu_{\tau_1}} (g')^2 &\lessapprox \frac{g(m_2)^2}{2\pi \sigma \tau_2} \frac{1}{Z_{\tau_1}} \int_{B_\delta(s)} e^{-(x-s)^2/(\sigma \tau_2)} dx \cdot \sup_{x\in B_\delta(s)} e^{-H(x)/\tau_1} \\
&\lessapprox \frac{g(m_2)^2}{2\pi} \frac{\sqrt{H''(m_1) |H''(s)|}}{ \sqrt{2\tau_1\tau_2 } } e^{-(1-\eta)H(s)/\tau_1},
\end{align}
where $\eta = O(\delta^2)$. Since $\tau_2 > K\tau_1$ for a constant $K > 1$, choosing $\delta$ sufficiently small, this implies $\tau_1 \frac{\E_{\nu^{\tau_1}} (g')^2}{\E_{\nu^{\tau_1}} g^2}$ is asymptotically negligible compared to $\rho$. 

Proof of Proposition \ref{p_opt_EK_1d_LSI} (\textbf{Optimality of LSI in 1d up to constant factor}): In the same set-up as above, imposing $\E_{\nu^{\tau_1}} g^2 = 1$, we get
\[
\frac{1}{2}\frac{\mathcal{I}_\pi (f^2)}{\ent_\pi (f)} \leq \tau_1 \frac{\E_{\nu^{\tau_1}} (g')^2}{\ent_{\nu^{\tau_1}} g^2 } + \tau_2 \frac{\E_{\nu^{\tau_2}} (g')^2}{\ent_{\nu^{\tau_1}} g^2 \E_{\nu^{\tau_2}} g^2 }.
\]
We use the same form of ansatz as before with
\begin{align}
g(m_1)^2 \approx \frac{Z^{\tau_1}_2}{Z^{\tau_1}_1} \approx \frac{\sqrt{H''(m_1)}}{\sqrt{H''(m_2)}} e^{-H(m_2)/\tau_1}, \quad g(m_2)^2 = \frac{1}{g(m_1)^2}
\end{align}
such that $\E_{\nu^{\tau_1}} g^2 = 1$. Then
\begin{align}
\E_{\nu^{\tau_2}} g^2 &\approx Z_1^{\tau_2} g(m_1)^2 + Z_2^{\tau_2} g(m_2)^2 \approx Z_2^{\tau_2} g(m_2)^2, \\ 
\ent_{\nu^{\tau_1}} g^2 &
\approx Z_1^{\tau_1} g(m_1)^2 \ln g(m_1)^2 + Z_2^{\tau_1} g(m_2)^2 \ln g(m_2)^2 \approx \ln g(m_2)^2 \approx \frac{H(m_2)}{\tau_1} ,
\end{align}
and the same computation as before shows
\begin{align}
\E_{\nu^{\tau_1}} (g')^2 &\lessapprox g(m_2)^2\frac{\sqrt{H''(m_1) |H''(s)|}}{2\pi \sqrt{2\tau_1\tau_2 }} e^{-(1-\eta)H(s)/\tau_1}, \\
\E_{\nu^{\tau_2}} (g')^2 &\approx g(m_2)^2\frac{\sqrt{H''(m_1) |H''(s)|}}{2\pi \tau_2} e^{-H(s)/\tau_2},
\end{align}
where $\eta = O(\delta^2)$. This implies
\begin{align}
\tau_2 \frac{\E_{\nu^{\tau_2}} (g')^2}{\ent_{\nu^{\tau_1}} g^2  \E_{\nu^{\tau_2}} g^2} \approx \tau_1  \frac{\sqrt{H''(m_2)|H''(s)|}}{2\pi H(m_2)} e^{(H(m_2)-H(s))/\tau_2}  \lesssim \alpha,
\end{align}
and that $\tau_1 \frac{\E_{\nu^{\tau_1}} (g')^2}{\ent_{\nu^{\tau_1}} g^2}$ is asymptotically negligible compared to $\alpha$. 

\section*{Acknowledgment}
The authors want to thank Max Fathi and Paul Bressloff for the fruitful discussions. 
GM and AS want to thank the University of Bonn for financial support via the CRC 1060 \emph{The Mathematics of Emergent
Effects} of the University of Bonn that is funded through the Deutsche Forschungsgemeinschaft (DFG, German Research Foundation).
AS also is funded by the DFG under Germany's Excellence Strategy EXC 2044--390685587, Mathematics M\"unster: Dynamics--Geometry--Structure.
WT gratefully acknowledges financial support through an NSF grant DMS-2113779 and a start-up grant at Columbia University.

\setlength\parskip{0pt}

\bibliographystyle{alphaabbr}
\bibliography{bib}

\newcommand{\etalchar}[1]{$^{#1}$}
\begin{thebibliography}{DCWY19}

\bibitem[And80]{Andersen80}
H.~C. Andersen.
\newblock Molecular dynamics simulations at constant pressure and/or
  temperature.
\newblock {\em J. Chem. Phys.}, 72(4):2384--2393, 1980.

\bibitem[BEGK04]{Bovier04}
A.~Bovier, M.~Eckhoff, V.~Gayrard, and M.~Klein.
\newblock Metastability in reversible diffusion processes. {I}. {S}harp
  asymptotics for capacities and exit times.
\newblock {\em J. Eur. Math. Soc. (JEMS)}, 6(4):399--424, 2004.

\bibitem[Ber13]{Ber13}
N.~Berglund.
\newblock Kramers' law: validity, derivations and generalisations.
\newblock {\em Markov Process. Related Fields}, 19(3):459--490, 2013.

\bibitem[BGK05]{Bovier05}
A.~Bovier, V.~Gayrard, and M.~Klein.
\newblock Metastability in reversible diffusion processes. {II}. {P}recise
  asymptotics for small eigenvalues.
\newblock {\em J. Eur. Math. Soc. (JEMS)}, 7(1):69--99, 2005.

\bibitem[BR16]{BR16}
F.~Bouchet and J.~Reygner.
\newblock Generalisation of the {E}yring-{K}ramers transition rate formula to
  irreversible diffusion processes.
\newblock {\em Ann. Henri Poincaré}, 17(12):3499--3532, 2016.

\bibitem[CCD{\etalchar{+}}19]{CC19}
Y.~Chen, J.~Chen, J.~Dong, J.~Peng, and Z.~Wang.
\newblock Accelerating nonconvex learning via replica exchange {L}angevin
  diffusion.
\newblock In {\em International Conference on Learning Representations (ICLR)},
  2019.

\bibitem[CG08]{CaGu08}
P.~Cattiaux and A.~Guillin.
\newblock Deviation bounds for additive functionals of {M}arkov processes.
\newblock {\em ESAIM Probab. Stat.}, 12:12--29, 2008.

\bibitem[CS11]{CS11}
J.~D. Chodera and M.~R. Shirts.
\newblock Replica exchange and expanded ensemble simulations as {G}ibbs
  sampling: Simple improvements for enhanced mixing.
\newblock {\em J. Chem. Phys.}, 135(19):194110, 2011.

\bibitem[Dal17]{Dala17}
A.~S. Dalalyan.
\newblock Theoretical guarantees for approximate sampling from smooth and
  log-concave densities.
\newblock {\em J. R. Stat. Soc. Ser. B. Stat. Methodol.}, 79(3):651--676, 2017.

\bibitem[DCWY19]{DC19}
R.~Dwivedi, Y.~Chen, M.~J. Wainwright, and B.~Yu.
\newblock Log-concave sampling: {M}etropolis-{H}astings algorithms are fast.
\newblock {\em J. Mach. Learn. Res.}, 20(183):1--42, 2019.

\bibitem[DDN18]{Doll17}
J.~Doll, P.~Dupuis, and P.~Nyquist.
\newblock A large deviations analysis of certain qualitative properties of
  parallel tempering and infinite swapping algorithms.
\newblock {\em Appl. Math. Optim.}, 78(1):103--144, 2018.

\bibitem[DLPD12]{Dupuis12}
P.~Dupuis, Y.~Liu, N.~Plattner, and J.~D. Doll.
\newblock On the infinite swapping limit for parallel tempering.
\newblock {\em Multiscale Model. Simul.}, 10(3):986--1022, 2012.

\bibitem[DM17]{DM17}
A.~Durmus and E.~Moulines.
\newblock Nonasymptotic convergence analysis for the unadjusted {L}angevin
  algorithm.
\newblock {\em Ann. Appl. Probab.}, 27(3):1551--1587, 2017.

\bibitem[DT21]{DT20}
J.~Dong and X.~T. Tong.
\newblock Replica exchange for non-convex optimization.
\newblock {\em J. Mach. Learn. Res.}, 22(173):1--59, 2021.

\bibitem[GCS{\etalchar{+}}14]{GS13}
A.~Gelman, J.~B. Carlin, H.~S. Stern, D.~B. Dunson, A.~Vehtari, and D.~B.
  Rubin.
\newblock {\em Bayesian data analysis}.
\newblock CRC Press, Boca Raton, FL, third edition, 2014.

\bibitem[GH86]{GH86}
S.~Geman and C.-R. Hwang.
\newblock Diffusions for global optimization.
\newblock {\em SIAM Journal on Control and Optimization}, 24(5):1031--1043,
  1986.

\bibitem[HHS11]{HHS11}
F.~H\'{e}rau, M.~Hitrik, and J.~Sj\"{o}strand.
\newblock Tunnel effect and symmetries for {K}ramers-{F}okker-{P}lanck type
  operators.
\newblock {\em J. Inst. Math. Jussieu}, 10(3):567--634, 2011.

\bibitem[HKN04]{HeKlNi04}
B.~Helffer, M.~Klein, and F.~Nier.
\newblock Quantitative analysis of metastability in reversible diffusion
  processes via a {W}itten complex approach.
\newblock {\em Mat. Contemp.}, 26:41--85, 2004.

\bibitem[HKS89]{HKS}
R.~A. Holley, S.~Kusuoka, and D.~W. Stroock.
\newblock Asymptotics of the spectral gap with applications to the theory of
  simulated annealing.
\newblock {\em J. Funct. Anal.}, 83(2):333--347, 1989.

\bibitem[HN05]{HeNi05}
B.~Helffer and F.~Nier.
\newblock {\em Hypoelliptic estimates and spectral theory for {F}okker-{P}lanck
  operators and {W}itten {L}aplacians}, volume 1862 of {\em Lecture Notes in
  Mathematics}.
\newblock Springer-Verlag, Berlin, 2005.

\bibitem[HN06]{HeNi06}
B.~Helffer and F.~Nier.
\newblock Quantitative analysis of metastability in reversible diffusion
  processes via a {W}itten complex approach: the case with boundary.
\newblock {\em Mem. Soc. Math. Fr., Nouv. Ser.}, (105):vi+89, 2006.

\bibitem[HNR20]{HNR20}
H.~Hult, P.~Nyquist, and C.~Ringqvist.
\newblock Infinite swapping algorithm for training restricted {B}oltzmann
  machines.
\newblock In {\em Monte {C}arlo and quasi-{M}onte {C}arlo methods}, volume 324,
  pages 285--307. Springer, Cham, 2020.

\bibitem[KAJ94]{KoAnja94}
C.~Koulamas, S.~R. Antony, and R.~Jaen.
\newblock A survey of simulated annealing applications to operations research
  problems.
\newblock {\em Omega}, 22(1):41--56, 1994.

\bibitem[KGV83]{KGV}
S.~Kirkpatrick, J.~Gelatt, and M.~Vecchi.
\newblock Optimization by simulated annealing.
\newblock {\em Science}, 220(4598):671--680, 1983.

\bibitem[KZ09]{KaSrZa09}
S.~Kannan and M.~Zacharias.
\newblock Simulated annealing coupled replica exchange molecular dynamics--an
  efficient conformational sampling method.
\newblock {\em J. Struct. Biol.}, 166(3):288--294, 2009.

\bibitem[LLPN19]{LL19}
T.~Leli\`evre, D.~Le~Peutrec, and B.~Nectoux.
\newblock Exit event from a metastable state and {E}yring-{K}ramers law for the
  overdamped {L}angevin dynamics.
\newblock In {\em Stochastic dynamics out of equilibrium}, volume 282, pages
  331--363. Springer, Cham, 2019.

\bibitem[LPA{\etalchar{+}}09]{LiProArZhGo09}
Y.~Li, V.~A. Protopopescu, N.~Arnold, X.~Zhang, and A.~Gorin.
\newblock Hybrid parallel tempering and simulated annealing method.
\newblock {\em Appl. Math. Comput.}, 212(1):216--228, 2009.

\bibitem[Mic92]{Miclo}
L.~Miclo.
\newblock Recuit simul\'{e} sur {${\bf R}^n$}. \'{E}tude de l'\'{e}volution de
  l'\'{e}nergie libre.
\newblock {\em Ann. Inst. H. Poincar\'{e} Probab. Statist.}, 28(2):235--266,
  1992.

\bibitem[Mon18]{MM}
P.~Monmarch\'{e}.
\newblock Hypocoercivity in metastable settings and kinetic simulated
  annealing.
\newblock {\em Probab. Theory Related Fields}, 172(3-4):1215--1248, 2018.

\bibitem[MS14]{MS14}
G.~Menz and A.~Schlichting.
\newblock Poincar\'{e} and logarithmic {S}obolev inequalities by decomposition
  of the energy landscape.
\newblock {\em Ann. Probab.}, 42(5):1809--1884, 2014.

\bibitem[Nar99]{Nar99}
K.~Nara.
\newblock Simulated annealing applications.
\newblock In {\em Modern Optimisation Techniques in Power Systems}, pages
  15--38. Springer, Dordrecht, 1999.

\bibitem[Pav07]{Pav07}
I.~Pavlyukevich.
\newblock L\'{e}vy flights, non-local search and simulated annealing.
\newblock {\em J. Comput. Phys.}, 226(2):1830--1844, 2007.

\bibitem[RC04]{RC05}
C.~P. Robert and G.~Casella.
\newblock {\em Monte {C}arlo {S}tatistical {M}ethods}.
\newblock Springer-Verlag, New York, second edition, 2004.

\bibitem[RT96]{RT96}
G.~O. Roberts and R.~L. Tweedie.
\newblock Exponential convergence of {L}angevin distributions and their
  discrete approximations.
\newblock {\em Bernoulli}, 2(4):341--363, 1996.

\bibitem[Sch12]{Schthesis}
A.~Schlichting.
\newblock {\em The {E}yring-{K}ramers formula for {P}oincar{\'e} and
  logarithmic {S}obolev inequalities}.
\newblock PhD thesis, Universit{\"{a}}t Leipzig, 2012.
\newblock Available at
  \url{http://nbn-resolving.de/urn:nbn:de:bsz:15-qucosa-97965}.

\bibitem[TZ21]{TZ21}
W.~Tang and X.~Y. Zhou.
\newblock Simulated annealing from continuum to discretization: a convergence
  analysis via the {E}yring--{K}ramers law.
\newblock 2021.
\newblock arXiv:2102.02339.

\bibitem[\v{C}85]{Cer}
V.~\v{C}ern\'y.
\newblock Thermodynamical approach to the traveling salesman problem: an
  efficient simulation algorithm.
\newblock {\em J. Optim. Theory Appl.}, 45(1):41--51, 1985.

\bibitem[vLA87]{Laar87}
P.~J.~M. van Laarhoven and E.~H.~L. Aarts.
\newblock {\em Simulated annealing: theory and applications}, volume~37.
\newblock D. Reidel Publishing Co., Dordrecht, 1987.

\bibitem[Wu00]{Wu00}
L.~Wu.
\newblock A deviation inequality for non-reversible {M}arkov processes.
\newblock {\em Ann. Inst. H. Poincar\'{e} Probab. Statist.}, 36(4):435--445,
  2000.

\bibitem[WY08]{WuYa08}
L.~Wu and N.~Yao.
\newblock Large deviation principles for {M}arkov processes via
  {$\Phi$}-{S}obolev inequalities.
\newblock {\em Electron. Commun. Probab.}, 13:10--23, 2008.

\bibitem[YD09]{YaSu09}
X.-S. Yang and S.~Deb.
\newblock {Cuckoo Search via L\'evy flights}.
\newblock In {\em 2009 World Congress on Nature Biologically Inspired Computing
  (NaBIC)}, pages 210--214, 2009.

\end{thebibliography}

\end{document}